\documentclass{amsart}
\usepackage{fullpage}
\usepackage{array}
\usepackage{mathrsfs}
\usepackage{hyperref}
\usepackage{alltt,graphicx,graphpap,color}
\usepackage{ulem}

\newcommand{\Id}[1]{\ensuremath{\textnormal{Id}({#1})}}

\newcommand{\gen}[1]{\ensuremath{\langle{#1}\rangle}}

\newcommand{\var}{\ensuremath{\textnormal{var}}}
\newcommand{\Idg}{\ensuremath{\textnormal{Id}^{G}}}
\newcommand{\IdG}{\ensuremath{\textnormal{Id}^{(G,\ast)}}}

\newcommand{\blue}{\color{blue}}

\newcommand{\spam}{\ensuremath{\textnormal{span}}}

\newcommand{\Gstar}{\ensuremath{(G,\ast)}-}

\newtheorem{teorema}{Theorem}[section]
\newtheorem{exemplo}[teorema]{Example}

\newtheorem{lema}[teorema]{Lemma}

\newtheorem{observacao}[teorema]{Remark}

\newtheorem{definicao}[teorema]{Definition}
\newcolumntype{C}[1]{>{\centering\let\newline\\\arraybackslash\hspace{0pt}}m{#1}}

\usepackage[utf8]{inputenc}
\usepackage{amsfonts}
\usepackage{amssymb}
\usepackage{verbatim}

\numberwithin{equation}{section}

\begin{document}
	\title{Algebras with additional structures and \\ multiplicities bounded by a constant }

\author[R. B. dos Santos, A. C Vieira and R. F. D. N. Vieira]{
	R. B.  dos Santos$^{1}$, A. C.  Vieira$^{*, 1, 2}$  and R. F. D. N. Vieira$^{3}$}

\dedicatory{Departamento de Matemática, Instituto de Ciências Exatas, Universidade Federal de Minas Gerais. \\ Avenida Antonio Carlos 6627, 31123-970, Belo Horizonte, Brazil}

\thanks{\footnotesize $^{1}$ Partially supported by CNPq}
\thanks{\footnotesize $^{2}$ Partially supported by FAPEMIG}
\thanks{\footnotesize $^{3}$ Partially supported by CAPES}
\thanks{\footnotesize {$^*$ Corresponding author}}
\thanks{\footnotesize {\it E-mail addresses}: rafaelsantos23@ufmg.br (dos Santos), anacris@ufmg.br (Vieira), reyssilafdn@ufmg.br (Vieira).}

\subjclass[2020]{Primary 16R10, 16R50, Secondary 16W10, 16W50, 20C30.}

\keywords{Graded involutions, cocharacters, multiplicities.}

\begin{abstract} 
Let 
$G$ be a finite group and 
$A$ a $G$-graded algebra over a field $F$ of characteristic zero. We characterize the 
varieties of $G$-graded algebras such that the multiplicities $m_{\langle \lambda \rangle}$ appering in the  $\langle n \rangle $-cocharacters of $A$ are bounded by a constant, in terms of $G$-identities. If $A$ is endowed with a graded involution $\ast$, i.e. if $A$ is a $(G,\ast)$-algebra, we characterize the varieties of 
$(G,*)$-algebras whose multiplicities in the sequence of $\langle n\rangle$-cocharacters of $A$ are bounded by $1$  by showing a list of $(G,\ast)$-polynomial identities satisfied by such varieties.

\end{abstract}

\maketitle

\section{Introduction}
In this paper, we consider $A$ to be an  associative algebra over a field $F$ of characteristic zero.  We say that $A$ is a PI-algebra if $A$ admits a non-trivial polynomial identity. 
It is well known that the set $\Id A$ of all identities of a given PI-algebra $A$ is a $T$-ideal, i.e. an ideal invariant under all endomorphisms of the
free associative algebra $F\langle X\rangle$. Moreover $\Id A$ is generated, as a $T$-ideal, by a finite set of multilinear identities \cite{Kemer}. Considering $P_n$ as the space of multilinear polynomials in the first $n$ variables,
 Regev [\ref{A.R}] introduced the sequence of codimensions of  $A$, $\{c_n(A)\}_{n\geq 1}$,
whose $n$-th term is given by
$$c_n(A):=\dim_F P_n(A), \; \mbox{where}\;\; P_n(A)=\frac{P_n}{P_n \cap \Id A},~  n \ge 1.$$

We notice that the symmetric group
$S_n$ acts on $P_n$ by permuting $n$ variables and so $P_n$ is an $S_n$-module. Since $P_n \cap \Id A$ is invariant
under this action,  the quotient space $P_n(A)$ inherits the structure of $S_n$-module and we may consider its $S_n$-character $\chi_n(A)$, called $n$-th cocharacter of $A$. By complete reducibility, we have a decomposition
\begin{equation}\label{decomCocar}
\chi_n( A) = \sum\limits_{\lambda\vdash n}m_{\lambda}\chi_{\lambda},\end{equation}
 where $\chi_{\lambda}$ is the irreducible $S_n$-character associated to the partition $\lambda\vdash n$ and $m_{\lambda}$ is its multiplicity. 

In this case, it is clear that
$$
c_n(A)=\chi_n(A)(1) = \sum\limits_{\lambda \vdash n}
m_{\lambda}\chi_{\lambda}(1) = \sum\limits_{\lambda \vdash n}
m_{\lambda}d_{\lambda},
$$
 where 
$d_{\lambda}$ represents the degree of $\chi_{\lambda}$, given by the Hook Formula (\cite[Theorem 3.10.2]{S}).

The description of the corresponding $T$-ideal of identities is still an open problem for several algebras, such as the matrix algebras $M_k(F),$ for $k \ge 3.$  In order to obtain information about the identities satisfied by an algebra, some authors started studying the multiplicities appearing in the decomposition into irreducible characters as in (\ref{decomCocar}). 

 In 1983, Berele and Regev [\ref{BR}], showed that if $A$ is a PI-algebra,  then the multiplicities appearing in the decomposition of its $n$-th cocharacter are polynomially bounded. So, it is natural to ask whether it is possible to obtain a better quota for the multiplicities $m_{\lambda}$ in (\ref{decomCocar}). 

  In $1976,$ Ananin and Kemer gave a characterization of algebras having multiplicities bounded by 1 in the decomposition of the cocharacter (see [\ref{A}]). They  established  that if $A$ is a PI-algebra  
 with cocharacter as in (\ref{decomCocar}) then $m_{\lambda} \le 1$ for all $\lambda \vdash n$ and for all $n \ge 1$ if and only if $\alpha[x_1,x_2]x_2+\beta x_2[x_1,x_2]\in \Id A$ for some $\alpha,$ $\beta \in F,$ $(\alpha, \beta)\neq (0,0),$  where $[x_1,x_2]=x_1x_2-x_2x_1$ denotes the usual commutator. Later, in 1999, Mishchenko, Regev and Zaicev generalized Ananin and Kemer result by characterizing algebras having multiplicities bounded by a constant (see \cite{MRZ}). 

 These kind of characterizations were studied in different contexts, such as graded algebras and superalgebras with graded involution (see in \cite{GPV}, \cite{GC} and \cite{F}).

If $A=\displaystyle \bigoplus_{g\in G}A^{(g)}$ is a $G$-graded algebra endowed with an involution $\ast$ such that $(A^{(g)})^*=A^{(g)}$, i.e. $\ast$ is a graded involution on $A$, then we say that $A$ is a $(G,\ast)$-algebra. As in the previous cases, one may consider its \gen{n}-cocharacter and study the respective multiplicities.

 This paper is divided into two parts. In the first part, we study $G$-graded algebras and present a characterization of $G$-graded algebras $A$ whose multiplicities are bounded by a constant via $G$-polynomial identities satisfied by $A$. In the second part, we study $(G,\ast)$-algebras considering $G$ a finite abelian group. We extend the characterization given by Ananin and Kemer in [\ref{A}] to the context of $(G,*)$-algebras by presenting a list of identities that a $(G,*)$-algebra has to satisfy to ensure that the multiplicities in the corresponding cocharacter are bounded by 1.

\section{$G$-graded algebras and $(G,*)$-algebras}

Let $G$ be a finite multiplicative group with unit element
$1,$  $F$ a field of characteristic zero and $A$
 an associative algebra over $F.$ We say that $A$ is a $G$-graded algebra if it can be written as a direct sum of vector
subspaces $A=\displaystyle\bigoplus\limits_{g\in G}A^{(g)}$
 such that $A^{(g)}A^{(h)}\subseteq  A^{(gh)},$ for all $g,$ $ h $ $\in  G.$ The
subspaces $A^{(g)},g\in G,$ are called homogeneous components of degree $g$ of $A.$ The support of the $G$-graded algebra $A$ is defined as $\mbox{supp}(A) = \{g \in G ~|~A^{(g)}\neq \{0\}\}.$

Any algebra $A$ can be regarded as a $G$-graded algebra via the trivial $G$-grading,
where  $A^{(1)} = A$ and $A^{(g)} = \{0\},$ for all $g \in G\backslash \{1\}.$
In case when $|G|=2,$ then we simply say that $A$ is a superalgebra. 
 If $B$ is a subalgebra of a $G$-graded algebra $A$, we say that $B$ is a $G$-graded subalgebra of $A$ if $B$ has a decomposition $B=\displaystyle\bigoplus\limits_{g \in G}(B\cap A^{(g)}).$

The algebra of $n \times n$ upper triangular matrices on $F$ is denoted by $UT_n,$ and $e_{ij}$ denotes the usual matrix unit, for $1 \le i,j\le n.$ In \cite{VZ}, the authors proved that, up to isomorphism, any $G$-grading on $UT_n$ is elementary.  We recall that an elementary grading on $UT_n$ induced by the $n$-tuple $(g_1,\ldots, g_n)\in G^n$ is given by $UT_n^{(g)}=\mbox{span}_F\{e_{ij} ~|~ g_i^{-1}g_j=g\}$. 

\begin{exemplo}\label{exemG-grad} Given $g \in G,$ denote by $UT_2^g$ the algebra of $2\times 2$ upper triangular matrices with elementary grading induced
by the pair $(1, g)$, i.e. $$UT_2^{(1)} = Fe_{11}+ Fe_{22}, ~UT_2^{(g)} = Fe_{12} ~\mbox{and}~ UT_2^{(h)} = \{0\}, ~ \forall ~h \in G\setminus \{1,g\}. $$ We also write $UT_2^1$ for $UT_2$ with the trivial $G$-grading.
\end{exemplo}

  For all $g \in G,$ consider $X^{(g)} = \{x_{i,g} ~|~ g \in G, ~ i \ge 1\}$ a countable set of variables of degree $g$, and set $X =\bigcup\limits_{g\in G}X^{(g)}.$ Let $\mathcal{F}:=F \langle X| G \rangle$ be the free associative algebra generated by $X$ over $F$ and consider
$\mathcal{F}^{(g)} = \mbox{span}_F \{x_{i_1,g_{j_1}}
\cdots x_{i_t,g_{j_m}}
~|~ g_{j_1}\cdots g_{j_m} = g\}$ the space of elements having homogeneous
degree $g$.  Notice that $\mathcal{F} =\bigoplus\limits_{g\in G}\mathcal{F}^{(g)}$ has a structure of $G$-graded algebra whose elements are called $G$-polynomials. 

\begin{definicao} A $G$-polynomial $f = f(x_{1,g_1} , \ldots, x_{t_1,g_1} , \ldots, x_{1,g_k} , \ldots, x_{t_k,g_k} )$ is a $G$-identity of a $G$-graded algebra $A,$ and we write $f \equiv 0$ on $A$, if $$f(a_{1,g_1} ,\ldots, a_{t_1,g_1}, \ldots,a_{1,g_k} ,\ldots,a_{t_k,g_k} )=0.$$ for all $a_{1,g_i} , \ldots , a_{t_i,g_i} \in A^{(g_i)},$ $ i = 1,\ldots, k,$

\end{definicao}

Let $\Idg(A)\subseteq \mathcal{F}$ be the set of all $G$-identities of $A.$ This is a $T_{G}$-ideal, an ideal invariant
under all endomorphisms of $\mathcal{F}$ that preserve the grading. The set $\Idg(A)$  is finitely generated as a $T_G$-ideal.  We write
$\langle f_1, \ldots, f_m \rangle_{{T_G}}$ to indicate that  $\Idg(A)$ is generated, as a $T_G$-ideal, by $f_1, \ldots, f_m \in \mathcal{F}.$  

The $G$-variety generated by $A$, denoted $\mathcal{V}:=\var^G(A)$, is the class of all $G$-graded algebras $B$ such that $\Idg(A)\subseteq \Idg(B).$

\begin{exemplo}\cite[{Theorem 2.3}]{GC}
\label{exemidealgrad} 
 We have: 
 \begin{itemize} \item[1)]$\Idg(UT_2^1)=\langle [x_{1,1},x_{2,1}][x_{3,1},x_{4,1}], x_{1,h}~|~h \in G\backslash\{1\}\rangle_{T_G}\,$
 \item[2)]
 $\Idg(UT_2^g) = \langle[x_{1,1}, x_{2,1}], x_{1,g}x_{2,g}, x_{1,h}\mid h\in G\backslash \{1\}\rangle_{T_G}, $  for $g\in G\setminus \{1\},$ 
  
\end{itemize}
    
\end{exemplo}

 Since $F$ has characteristic zero, $\Idg(A)$ is determined by its multilinear $G$-polynomials. We define  
$$P_n^G=\mbox{span}_F\{x_{\sigma(1),g_{i_1}}\cdots x_{\sigma(n),g_{i_t}}~|~\sigma \in S_n, ~g_{i_1}, \ldots, g_{i_t} \in G\}$$ 
the space of multilinear $G$-polynomials of degree $n.$

\begin{definicao}
    For $n\ge 1,$ the $n$-th $G$-codimension of a $G$-graded algebra $A$ is defined as $$c_n^{G}(A):=\dim_F\frac{P_n^{G}}{P_n^{G}\cap \Idg(A)}. $$ 
\end{definicao}

An important feature of the sequence of $G$-graded codimensions is given in the following result (see \cite{GR}).
\begin{teorema}\label{regev} Let $A$ be a PI-algebra graded by a group $G$. Then the sequence of $G$-codimensions
$c^G_n (A),$ $n = 1, 2, \ldots ,$ is exponentially bounded.
\end{teorema}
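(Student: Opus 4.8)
The plan is to deduce the graded statement from the classical (ungraded) Regev theorem by a direct comparison of codimensions. Since $A$ is a PI-algebra and $\mathrm{char}\,F=0$, the ordinary codimension sequence $c_n(A)=\dim_F P_n/(P_n\cap\Id{A})$ is exponentially bounded, say $c_n(A)\le a^n$ for a suitable constant $a$. It therefore suffices to show that $c_n^G(A)$ differs from $c_n(A)$ by at most an exponential factor depending only on $|G|$.

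First I would refine $P_n^G$ according to the degrees carried by its variables. For each map $\epsilon\colon\{1,\dots,n\}\to G$, let $P_n^{G,\epsilon}$ be the span of those multilinear monomials in which the $i$-th variable has degree $\epsilon(i)$; since distinct degree assignments involve distinct graded variables, $P_n^G=\bigoplus_{\epsilon}P_n^{G,\epsilon}$, a sum of $|G|^n$ subspaces because $G$ is finite, each of dimension $n!$. Writing $I_\epsilon=P_n^{G,\epsilon}\cap\Idg(A)$, the inclusion $\bigoplus_\epsilon I_\epsilon\subseteq P_n^G\cap\Idg(A)$ gives at once
$$c_n^G(A)=\dim_F\frac{P_n^G}{P_n^G\cap\Idg(A)}\le\sum_{\epsilon}\dim_F\frac{P_n^{G,\epsilon}}{I_\epsilon}.$$

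Next I would bound each summand by $c_n(A)$. Fix $\epsilon$ and consider the degree-forgetting linear isomorphism $\phi_\epsilon\colon P_n\to P_n^{G,\epsilon}$ sending the ordinary variable $x_i$ to the graded variable of degree $\epsilon(i)$. If $f\in P_n\cap\Id{A}$, then evaluating $\phi_\epsilon(f)$ on homogeneous elements $a_i\in A^{(\epsilon(i))}$ reproduces the ordinary value $f(a_1,\dots,a_n)=0$; hence $\phi_\epsilon(P_n\cap\Id{A})\subseteq I_\epsilon$, and consequently $\dim_F P_n^{G,\epsilon}/I_\epsilon\le\dim_F P_n/(P_n\cap\Id{A})=c_n(A)$. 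Summing over the $|G|^n$ choices of $\epsilon$ yields $c_n^G(A)\le|G|^n c_n(A)\le(|G|\,a)^n$, which is exponentially bounded, as required.

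The argument is deliberately short: all of the genuine difficulty is imported from the ungraded Regev theorem, and the new content is only the elementary reduction above. The single point that needs care is the direction of the inclusion $\phi_\epsilon(P_n\cap\Id{A})\subseteq I_\epsilon$, namely that every ordinary identity specializes to a $G$-identity once the degrees of the variables are fixed (the converse being false, which is precisely why graded codimensions can exceed ungraded ones). I would expect this verification, together with the bookkeeping over the $|G|^n$ degree assignments, to be entirely routine; one may even upgrade the splitting inequality to an equality by substituting $0$ for all off-degree variables and invoking multilinearity, though this refinement is not needed for the exponential bound.
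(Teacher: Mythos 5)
Your proof is correct and is the standard reduction to the ordinary Regev theorem via the decomposition $P_n^G=\bigoplus_{\epsilon}P_n^{G,\epsilon}$ and the bound $c_n^G(A)\le |G|^n c_n(A)$; the paper itself states this result without proof, citing Giambruno--Regev, and your argument is essentially the one used there. No gaps: the inclusion $\phi_\epsilon(P_n\cap \Id{A})\subseteq I_\epsilon$ and the induced surjection of quotients are handled correctly.
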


For readers interested in studying the asymptotic behavior of such a sequence, we recommend the references \cite{ Plamen, V}.

Now, consider a linear map $* : A \to A.$ We say that $*$ is an involution on $A$ if $(a^*)^* = a$
and $(ab)^* = b^*a^*,$ for all $a, b \in A.$ Note that, in this case, $*$ is an antiautomorphism
of $A$ of order at most $2.$ If $A$ is an algebra endowed with an involution $*$, then
we say that $A$ is a $*$-algebra. For a commutative algebra $A,$ the identity map
is an involution on $A,$ called trivial involution, and in fact it is allowed only
when $A$ is commutative.

An involution $*$ defined on a $G$-graded algebra $A$ is called a graded involution if it preserves the homogeneous
components of $A,$ that is, $(A^{(g)})^* = A^{(g)},$ for all $g \in G.$ Observe that the existence of a graded involution on
$A$ implies that $\mbox{supp}(A)$ is a commutative subset of $G.$ Therefore, without loss of generality, we will assume
that $G$ is an abelian group.

\begin{definicao} A $G$-graded algebra $A$ endowed with a graded involution $*$ is called a $(G, *)$-algebra.
\end{definicao}
When $G$ is a cyclic group of order $2,$ we have $A$ is a superalgebra with
graded involution, and in this case we say that $A$ is a $*$-superalgebra.

 If $A$ is a $(G,*)$-algebra and $B$ is a subalgebra of $A,$ we say that $B$ is a $(G,*)$-subalgebra of $A$ if $B$ is a $G$-graded subalgebra of $A$ and $B^*=B.$
 Note that the homogeneous component $A^{(1)}$ is a $(G,*)$-subalgebra of $A$ with trivial $G$-grading and induced involution. 

When $A$ is a $(G, *)$-algebra, we can write

\begin{center}
	$A=\bigoplus\limits_{g\in G}\big((A^{(g)})^+$ \.+ $(A^{(g)})^-\big),$
\end{center}
where for each $g \in G$ we have that   
$$(A^{(g)})^+=\{a \in A^{(g)}\mid a^*=a\} ~\mbox{and} ~(A^{(g)})^-=\{a \in A^{(g)}\mid a^*=-a\}$$ 
denote the sets of symmetric and skew elements of the component of degree $g,$  respectively.

Let $G$ be a finite abelian group. For all $g \in G,$ consider $(X^{(g)})^* = \{x_{i,g}, x^*_{i,g} ~| ~ i \ge 1\}$ a countable set of variables and define $X =\bigcup\limits_{g\in G}(X^{(g)})^*.$
Let $F \langle X| G, * \rangle$ be the free associative \Gstar algebra generated by $X$ over $F,$ whose elements are called $(G,*)$-polynomials. Consider $Y=\underset{g\in G}{\bigcup }Y^{(g) }$ and $Z=\underset{g\in G}{\bigcup }Z^{(g) }$, where $Y^{(g)} = \{y_{i,g}=x_{i,g}+x^*_{i,g} ~| ~ i \ge 1\}$ is the set of
homogeneous symmetric variables of degree $g$ and $Z^{(g)} = \{z_{i,g}=x_{i,g}-x^*_{i,g} ~|~ i \ge 1\}$ is the set of
homogeneous skew variables of degree $g.$ 
Then, we have $\mathcal{F} := F\langle X ~|~ G, *\rangle = F\langle Y \cup Z\rangle.$ 
For any $g\in G$, define
$$\mathcal{F}_g = \mbox{span}_F \{w_{i_1,g_{j_1}}
\cdots w_{i_t,g_{j_m}}
~|~ g_{j_1}\cdots g_{j_m} = g,~ w_i \in \{y_i, z_i\}\}$$
the space of elements that have homogeneous
degree $g$ and notice that $\mathcal{F} =\bigoplus\limits_{g\in G}\mathcal{F}^{(g)}$ has a structure of $(G, *)$-algebra.

\begin{definicao} A $(G,*)$-polynomial $f=f(y_{1,1},\ldots,y_{i_1,1}, z_{1,1},\ldots,z_{j_1,1}, \ldots, y_{1,g_t},\ldots,y_{i_t,g_t}, z_{1,g_t},\ldots,z_{j_t,g_t}) \in \mathcal{F}$ is a $(G,*)$-identity of a $(G,*)$-algebra $A,$ and we write $f \equiv 0$ on $A$, if $$f(a^+_{1,1},\ldots,a^+_{i_1,1}, a^-_{1,1},\ldots,a^-_{j_1,1}, \ldots, a^+_{1,g_t},\ldots,a^+_{i_t,g_t}, a^-_{1,1},\ldots,a^-_{j_t,g_t})$$ for all $a^+_{1,1},\ldots,a^+_{i_1,1} \in (A^{(1)})^+, a^-_{1,1},\ldots,a^-_{j_1,1} \in (A^{(1)})^-, ~\ldots, a^+_{1,g_t},~\ldots,~a^+_{i_t,g_t} \in (A^{(g_t)})^+ , a^-_{1,1},\ldots,a^-_{j_t,g_t} \in (A^{(g_t)})^-.$ 

\end{definicao}

Let $\IdG(A) \subseteq \mathcal{F}$   be the set of all $(G,*)$-identities of $A.$
Notice that $\IdG(A)$ is an ideal invariant
under all endomorphisms of $\mathcal{F}$ that preserve the grading and commute with the involution, which is called
the $T_{(G,*)}$-ideal of $A.$ 

From now on, we use the notation $x_{i,r}$ to indicate a variable in the set $\{y_{i,r}, z_{i,r}\},$ for some $r \in G.$

Since $F$ is a field of characteristic zero, $\IdG(A)$ is determined by 
multilinear $(G,*)$-polynomials. Thus, we consider
$$P_n^{(G,*)}=\mbox{span}_F\{w_{\sigma(1)}w_{\sigma(2)}\cdots w_{\sigma(n)}~|~ \sigma \in S_n,~ w_i \in \{y_{i,g}, z_{i,g}\}, ~1 \le i \le n, ~g \in G\}$$ 
the space of multilinear $(G,*)$-polynomials of degree $n.$ 

\begin{definicao}
    For $n\ge 1,$ the $n$-th $(G,*)$-codimension of a $(G,*)$-algebra $A$ is defined as $$c_n^{(G,*)}(A):=\dim_F\frac{P_n^{(G,*)}}{P_n^{(G,*)}\cap \IdG(A)}. $$ 
\end{definicao}

As in Theorem \ref{regev}, if $A$ is a $(G,\ast)$-algebra satisfying a non trivial ordinary polynomial identity, then its sequence of $(G, *)$-codimensions is exponentially bounded. 
 For readers interested in studying the asymptotic behavior of such a sequence, we recommend the references \cite{Mara, OSV, Lorena}.

\section{ The $\langle n \rangle$-cocharacter for $G$-graded algebras} 

Recall that $G=\{g_1=1,g_2,\ldots,g_k\}$ is a finite abelian group of order $k$. For  $n \in \mathbb{N},$ write $n=n_1+n_2+\cdots+n_{k},$ where each $n_i\ge 0$  for $1\le i \le k,$ and  denote by $\langle n \rangle =(n_1, n_2, \ldots, n_{k})$ a composition of $n$ into $k$ parts. A multipartition $\langle \lambda \rangle=(\lambda_1, \lambda_2, \ldots, \lambda_{k})\vdash \langle n \rangle$ means $\lambda_i\vdash n_i$ for all $1 \le i\le k.$ When $\langle \lambda \rangle \vdash \langle n \rangle$ for some composition  $\langle n \rangle$ of $n,$  we simply write $\langle \lambda \rangle \vdash n.$

Let $P_{\langle n \rangle}$ be the space of multilinear $G$-polynomials in $n$ variables such that the first $n_1$ variables are homogeneous of degree $g_1=1,$ the next $n_2$ variables are homogeneous of degree $g_2,$ and so on, so that the last $n_{k}$ variables are homogeneous of degree $g_k.$ 

There are $\displaystyle\binom{n}{\langle n \rangle}:=\displaystyle\binom{n}{ n_1, \ldots, n_{k} }$ subspaces of $P_n^G$ isomorphic to $P_{\langle n \rangle}$. In fact, 
\begin{equation} \label{pn}
P_n^{G} \cong \displaystyle \bigoplus_{\langle n \rangle } \displaystyle\binom{n}{\langle n \rangle} P_{\langle n \rangle} .
\end{equation}

We consider the quotient space $$P_{\langle n \rangle}(A) = \frac{P_{\langle n \rangle}}{P_{\langle n \rangle}\cap \Idg(A)}$$ and define $c_{\langle n \rangle}(A)=\dim_F P_{\langle n \rangle}(A)$ to be the $\langle n \rangle $-codimension of $A$. 
By (\ref{pn}), the relationship between the $n$-th $G$-codimension of $A$ and its $\langle n \rangle $-codimensions is 
\begin{equation} \label{293}
		c_n^{G}(A)= \underset{\langle n \rangle }{\sum} \displaystyle\binom{n}{\langle n \rangle } c_{\langle n \rangle}(A). 
	\end{equation}

There is a natural left action of $S_{\langle n \rangle}:= S_{n_1}\times\cdots \times S_{n_{k}}$ on $P_{\langle n \rangle},$ where $S_{n_i}$ acts by permuting
the variables of homogeneous degree $g_i,$ $1\le i \le k.$  Since $P_{\langle n \rangle} \cap \Idg(A)$ is invariant under this
action, the quotient $P_{\langle n \rangle}(A)$ is an $S_{\langle n \rangle}$-module. By complete reducibility we may consider the decomposition of the $\langle n \rangle$-character of $P_{\langle n \rangle}(A),$ called $\langle n \rangle$-cocharacter of $A,$ into irreducible $S_{\langle n \rangle}$-characters: 
\begin{equation}\label{eq1coca}\chi_{\langle n \rangle}(A)=\sum\limits_{\langle \lambda \rangle \vdash \langle n \rangle} m_{\langle \lambda \rangle} \chi_{\langle \lambda \rangle},\end{equation}
where $\chi_{\langle \lambda \rangle}=\chi_{\lambda_1} \otimes\cdots \otimes \chi_{\lambda_{k}}$ and $m_{\langle \lambda \rangle}$  denotes the corresponding multiplicity.  The degree of the irreducible $S_{\langle \lambda \rangle}$-character $\chi_{\lambda_1} \otimes\cdots \otimes \chi_{\lambda_{k}}$ is given by $d_{\lambda_1}\cdots d_{\lambda_k},$ where $d_{\lambda_i}$ is the degree of the irreducible $S_{\lambda_i}$-character $\chi_{\lambda_i}$ given by the hook formula.

For all possibilities $(n_{i_{1}},\ldots , n_{i_{k}}),$ $ \ldots ,$ $ (n_{j_{1}},\ldots , n_{j_{k}})$ of sums of $k$ non-negative integers equal to $n,$ we will consider the set $\{\chi_{\langle n \rangle  }(A)\mid \langle n\rangle =(n_1, \ldots, n_{k})  \}$ of all non-zero $\langle n \rangle $-cocharacters of $A$.

 \begin{observacao} \label{obscaracter}
Consider $G$-graded algebras $A$ and $B$ having $\gen{n}$-characters given by 
$
\chi_{\gen{n}}(A)=\displaystyle \sum_{\gen{\lambda}\vdash \gen{n}}{m}_{\gen{\lambda}}\chi_{\gen{\lambda}}$ and $ \chi_{\gen{n}}(B)=\displaystyle \sum_{\gen{\lambda}\vdash \gen{n}}{m'}_{\gen{\lambda}}\chi_{\gen{\lambda}}, respectively.
$ 
If $B \in \var^G(A)$ then ${m'}_{\gen{\lambda}} \leq {m}_{\gen{\lambda}}$ for all compositions $n=n_1+\cdots + n_k$ and $\gen{\lambda} \vdash \gen{n}$.
    \end{observacao}

 To obtain more precise information about the multiplicities $m_{\langle \lambda \rangle}$ appearing in the decomposition  (\ref{eq1coca}), we use the representation theory of the general linear group $GL_m$ in the context of $G$-graded algebras. The
details may be found in [\cite[Section 12.4]{Dr} and in \cite{OSV}.

For $m \ge 1,$ define $X^m=
\bigcup\limits_{g\in G}(X^{(g)})^m$, where 
$$(X^{(g)})^m=\{x_{1,g}, \ldots, x_{m,g}\}.$$ 

Let $F_m:=F\langle X^m|G\rangle$ be the free associative $G$-graded algebra generated by $X^m$ over $F.$ Denote by $F_m^n$ the subspace of  homogeneous $G$-polynomials in $F_m$ with degree $n \ge m.$ The group $GL_m^{k}:= GL_m\times \cdots \times GL_m,$ the direct product of $k$-copies of $GL_m,$ acts diagonally on $F_m^n.$ Since $F_m^n\cap \Idg(A)$ is invariant under this action, the quotient space

 $$F_m^n(A)=\frac{F_m^n}{F_m^n\cap \Idg(A)}$$ 
 inherits a $GL_m^{k}$-module structure. We denote by $\psi_n^{G}(A)$ its $GL_m^{k}$-character, called the $n$-th $GL_m^{k}$-cocharacter of $A.$ It is known (see \cite{Dr})  that there is a one-to-one correspondence between irreducible $GL_m^{k}$-modules and multipartitions $\lambda=(\lambda_1, \ldots, \lambda_{k})\vdash \langle n \rangle,$ where each $\lambda_i$ is a partition of $n_i$ with at most $m$ parts, for $1 \le i \le k.$ Since $char(F)=0,$ complete reducibility implies that we may write 
 
 \begin{equation} \label{eq2coca}
\psi_n^{G}(A)=  \displaystyle \sum_{\langle n\rangle  } \underset{\scriptsize{\begin{array}{c}
\gen{\lambda}\vdash \gen{n} \\
			h( \lambda )\leq m
	\end{array}}}{\sum} \widetilde{m}_{\gen{\lambda}}\psi_{\gen{\lambda}},
\end{equation}  
where $\psi_{\langle \lambda \rangle}$ is the irreducible $GL_m^{k}$-character associated with $\langle \lambda \rangle,$ and $h(\langle \lambda \rangle)$ is the maximum value of the heights $h(\lambda_i),$ $1 \le i \le k,$ of the Young diagrams corresponding to the partitions $\lambda_i\vdash n_i.$ 

\begin{teorema}\label{multip}
If $\chi_{\langle n \rangle}(A)$ and $\psi_n^{G}(A)$ are the $\langle n \rangle$-cocharacter and the $GL_m^{k}$-cocharacter of $A$ given by (\ref{eq1coca}) and (\ref{eq2coca}), respectively, then $m_{\langle \lambda \rangle}=\tilde{m}_{\langle \lambda \rangle}$ for all multipartitions $\langle \lambda\rangle\vdash \langle n \rangle$ such that $h(\langle \lambda \rangle)\le m.$
	
\end{teorema}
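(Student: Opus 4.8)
The plan is to reduce the statement to the classical correspondence between $S_n$-cocharacters and $GL_m$-cocharacters of an ordinary PI-algebra (see \cite[Section 12.4]{Dr} and \cite{OSV}), carried out simultaneously over the $k$ homogeneous blocks of variables. Two facts will drive the argument. First, for any $\snn$-module $M$ and any $\gen{\lambda}\vdash\gen{n}$, the multiplicity of the irreducible $S_{\lambda_1}\otimes\cdots\otimes S_{\lambda_k}$ in $M$ equals $\dim_F e_{\gen{T}}M$, where $e_{\gen{T}}=e_{T_{\lambda_1}}\cdots e_{T_{\lambda_k}}$ is the product of the Young symmetrizers $e_{T_{\lambda_i}}$ attached to fixed tableaux $T_{\lambda_i}$, the $i$-th factor acting on the variables of degree $g_i$; these factors commute since they involve disjoint sets of variables. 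Applying this to $M=\pnn(A)$ gives $m_{\gen{\lambda}}=\dim_F e_{\gen{T}}\pnn(A)$. Second, for a polynomial $GL_m^{k}$-module $N$, the multiplicity of $\psi_{\gen{\lambda}}$ equals the dimension of the space $N^{+}_{\gen{\lambda}}$ of highest weight vectors of weight $\gen{\lambda}$, that is, elements that are of weight $\lambda_i$ under the $i$-th maximal torus and annihilated by all raising operators of each of the $k$ copies of $GL_m$. Applying this to $N=F_m^n(A)$ gives $\widetilde{m}_{\gen{\lambda}}=\dim_F (F_m^n(A))^{+}_{\gen{\lambda}}$.

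The heart of the proof is then to exhibit, for every $\gen{\lambda}$ with $h(\gen{\lambda})\le m$, a vector-space isomorphism
$$ e_{\gen{T}}\,\pnn(A)\;\cong\;(F_m^n(A))^{+}_{\gen{\lambda}}. $$
First I would construct it on the free objects, before quotienting by identities. A highest weight vector of weight $\gen{\lambda}$ is a $G$-polynomial in which, for each $i$, only the variables $x_{1,g_i},\ldots,x_{h(\lambda_i),g_i}$ occur, with $x_{j,g_i}$ appearing exactly $\lambda_{i,j}$ times; since $h(\lambda_i)\le m$, all these variables are available in $F_m^n$ \emph{regardless of whether $n_i\le m$}, which is precisely why the statement survives the range $n>m$. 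The map from the multilinear side to the highest weight side is the diagonalization that identifies, inside each degree block, all variables lying in a common row of the tableau $T_{\lambda_i}$; its inverse is the full polarization (restitution) operator that re-separates the repeated variables. One checks that diagonalization carries $e_{\gen{T}}\pnn$ onto the highest weight space and that polarization carries highest weight vectors back into $e_{\gen{T}}\pnn$, so that the two maps are mutually inverse at the level of the free algebra.

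It then remains to verify that this correspondence descends to the quotients by $\Idg(A)$. The key observation is that both diagonalization and polarization are realized by substitutions, and linear combinations of substitutions, that preserve the $G$-grading, hence map $\Idg(A)$ into $\Idg(A)$; consequently they induce well-defined, mutually inverse maps on $e_{\gen{T}}\pnn(A)$ and $(F_m^n(A))^{+}_{\gen{\lambda}}$. Combining the two dimension formulas yields $m_{\gen{\lambda}}=\dim_F e_{\gen{T}}\pnn(A)=\dim_F (F_m^n(A))^{+}_{\gen{\lambda}}=\widetilde{m}_{\gen{\lambda}}$ for all $\gen{\lambda}$ with $h(\gen{\lambda})\le m$, as claimed.

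I expect the main obstacle to be the compatibility with the ideal of identities in the range $n>m$: when variables are identified by diagonalization one must be sure that no information is lost modulo $\Idg(A)$, i.e. that polarization genuinely recovers the multilinear polynomial up to identities and a nonzero scalar. This is exactly where the hypothesis $h(\gen{\lambda})\le m$ is used, guaranteeing that the highest weight vector faithfully encodes the action of $e_{\gen{T}}$; handling it cleanly in the graded setting, where the construction must be performed independently and simultaneously in each of the $k$ homogeneous blocks, is the technical crux.
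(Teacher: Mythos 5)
Your proposal is correct, but note that the paper does not actually prove Theorem \ref{multip}: it is stated as a known result with the details deferred to \cite[Section 12.4]{Dr} and \cite{OSV}. Your outline --- computing $m_{\langle\lambda\rangle}$ as $\dim_F e_{T_{\langle\lambda\rangle}}P_{\langle n\rangle}(A)$ via Young symmetrizers, computing $\widetilde{m}_{\langle\lambda\rangle}$ as the dimension of the highest weight space of $F_m^n(A)$, and matching the two by the diagonalization/polarization correspondence (which preserves $\Idg{A}$ and uses $h(\langle\lambda\rangle)\le m$) --- is precisely the standard argument from those references, carried out blockwise over the $k$ homogeneous components.
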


From now on, it will be convenient to use the notation  
\begin{equation}\label{eq 3-}
\langle \lambda \rangle = ((\lambda_{1})_{{g_{1}}}, \ldots , (\lambda_{k})_{g_{k}})
\end{equation}
where  $(\lambda_{i})_{{g_{i}}}$  means that $(\lambda_{i})$ is a multipartition of $n_i$ for all $1 \le i \le k.$ Similarly, the composition $(n_1, \ldots, n_{k})$ of $n$ will be denoted by $(n_{1_{g_1}},n_{2_{g_2}}, \ldots, {n_{k}}_{g_k})$. Also,
we omit the empty multipartitions in this notation.

By [\cite{Dr}, Theorem 12.4.12], each irreducible $GL^{k}_m$-module is generated by a non-zero polynomial $f_{\langle \lambda \rangle},$  called the
highest weight vector associated with $\langle \lambda \rangle,$ given by
$$\begin{array}{cclc}	
	
	f_{{\langle \lambda \rangle}} &=&\prod\limits_{j=1}^{(\lambda_1)_1}St_{{h_j}(\lambda_1)}(x_{1,1}, \ldots, x_{{h_j}(\lambda_1),1})  \cdots \prod\limits_{j=1}^{(\lambda_{k})_1}St_{{h_j}(\lambda_{k})}(x_{1,g_k}, \ldots, x_{{h_j}(\lambda_{k}),g_k})
\end{array}$$
where $St_r(x_1, \ldots , x_r) = \sum\limits_{\sigma \in S_r}\mbox{sgn}(\sigma)x_{\sigma(1)}\cdots x_{\sigma(r)}$ is the standard polynomial of degree $r$ and $h_j(\lambda_{i})$  represents the height of the $j$-th column of Young table $T_{\lambda_{i}}$ associated  with $\lambda_i\vdash n_i.$ Every polynomial $f_{\langle \lambda \rangle}$ is linearly generated by the polynomials $f_{T_{\langle \lambda \rangle}}$ as we will see below. 

For all $i = 1, \ldots ,~ k$ we denote a tableau of shape $\lambda_i \vdash n_i$ by $T_{\lambda_i}$ and for a multipartition $\langle \lambda \rangle =
(\lambda_1, \ldots, \lambda_{k}) \vdash \langle n \rangle$ we consider the multitableau $T_{\langle \lambda \rangle} = (T_{\lambda_1} , ~\ldots ,~ T_{\lambda_{k}} )$ formed by $k$ Young tableaux, which
is filled by placing the numbers from $1$ to $n$ in ascending order from top to bottom. The standard multitableau is the one in which the integers $1, ~\ldots ,~ n$ are placed, in this order, column by column   from top to bottom, first in $T_{\lambda_{1}},$ then in $T_{\lambda_{2}},$ and so on, up to $T_{\lambda_{k}} .$

Consider $\sigma \in S_n$ the only permutation that changes the standard
multitableau to the multitableau $T_{\langle \lambda \rangle}.$ The highest weight vector $f_{T_{\langle \lambda \rangle}}$ corresponding to the multitableau
$T_{\langle \lambda \rangle}$ is defined by
$f_{T_{\langle \lambda \rangle}}:= f_{{\langle \lambda \rangle}}\sigma^{-1},$  
where the right action of $S_n$ on $F^n_m(A)$ is defined by exchanging the places of the variables in each monomial.   

The next result relates the highest weight vectors to the multiplicities in Theorem \ref{multip}. 

\begin{teorema}\label{multiplicity}
	The multiplicity $\tilde{m}_{\langle \lambda \rangle}$ in (\ref{eq2coca}) is non-zero if and only if there exists a multitableau $T_{\langle \lambda \rangle},$ such that $f_{T_{\langle \lambda \rangle}} \notin \Idg(A).$ 
	Moreover, $\tilde{m}_{\langle \lambda \rangle}$ is equal to the maximum number of highest weight vectors  associated to multitableau of type $\langle \lambda \rangle$ that are linearly independent in $F_{m}^n(A).$ 

\end{teorema}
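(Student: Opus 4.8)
The plan is to translate the statement into standard highest weight theory for $GL_m^{k}$, following the ungraded treatment in \cite{Dr} and carrying it out componentwise over the $k$ copies of $GL_m$. First I would note that $F_m^n\cap\Idg(A)$ is a $GL_m^{k}$-submodule of $F_m^n$; since $\mathrm{char}(F)=0$ all modules in sight are completely reducible, so the quotient $F_m^n(A)$ is again a polynomial $GL_m^{k}$-module with decomposition (\ref{eq2coca}). The engine of the proof is the general fact that, for any completely reducible polynomial $GL_m^{k}$-module $U$, the multiplicity with which the irreducible $W_{\langle\lambda\rangle}$ (of character $\psi_{\langle\lambda\rangle}$) occurs in $U$ equals the dimension of the subspace $U^{+}_{\langle\lambda\rangle}$ of highest weight vectors of weight $\langle\lambda\rangle$ inside $U$. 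Indeed, each irreducible summand $W_{\langle\mu\rangle}$ has a one-dimensional space of highest weight vectors, of weight $\langle\mu\rangle$, and a weight-$\langle\lambda\rangle$ vector killed by the raising operators can occur only when $\langle\mu\rangle=\langle\lambda\rangle$; hence $\dim U^{+}_{\langle\lambda\rangle}$ counts exactly the summands isomorphic to $W_{\langle\lambda\rangle}$.

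Next I would make the highest weight space explicit in the free module. By \cite[Theorem 12.4.12]{Dr}, the irreducible $GL_m^{k}$-module determined by $\langle\lambda\rangle$ is generated by $f_{\langle\lambda\rangle}$, which is its highest weight vector; spelling this out across the $k$ tensor factors and applying the right $S_n$-action shows that the polynomials $f_{T_{\langle\lambda\rangle}}$, as $T_{\langle\lambda\rangle}$ runs over all multitableaux of shape $\langle\lambda\rangle$, linearly span $(F_m^n)^{+}_{\langle\lambda\rangle}$, the standard multitableaux giving a basis. Now complete reducibility yields a $GL_m^{k}$-invariant splitting $F_m^n=(F_m^n\cap\Idg(A))\oplus C$ with $F_m^n(A)\cong C$, and the projection $\pi\colon F_m^n\to F_m^n(A)$ carries $(F_m^n)^{+}_{\langle\lambda\rangle}$ onto $\big(F_m^n(A)\big)^{+}_{\langle\lambda\rangle}$, because taking weight-$\langle\lambda\rangle$ highest weight vectors respects this direct sum. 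Combining the two steps,
$$
\tilde{m}_{\langle\lambda\rangle}=\dim\big(F_m^n(A)\big)^{+}_{\langle\lambda\rangle}=\dim\,\pi\big(\mathrm{span}\{f_{T_{\langle\lambda\rangle}}\}\big),
$$
which is precisely the maximal number of highest weight vectors $f_{T_{\langle\lambda\rangle}}$ that remain linearly independent modulo $\Idg(A)$, i.e.\ in $F_m^n(A)$; this is the \emph{moreover} assertion.

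The stated equivalence then follows at once: $\tilde{m}_{\langle\lambda\rangle}\neq 0$ if and only if $\pi$ does not annihilate the whole span of the $f_{T_{\langle\lambda\rangle}}$, that is, if and only if at least one $f_{T_{\langle\lambda\rangle}}\notin\Idg(A)$. The step I expect to require the most care is the explicit identification of $(F_m^n)^{+}_{\langle\lambda\rangle}$ with the span of the $f_{T_{\langle\lambda\rangle}}$ in the multigraded setting: one must check that the $GL_m$-highest weight computation from \cite{Dr} tensors correctly over the $k$ homogeneous blocks, and that the right $S_n$-action defining $f_{T_{\langle\lambda\rangle}}=f_{\langle\lambda\rangle}\sigma^{-1}$ indeed sweeps out the full highest weight space rather than a proper subspace. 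Everything else is the transfer of the classical $GL_m$ dictionary to $GL_m^{k}$ together with complete reducibility.
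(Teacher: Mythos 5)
The paper does not actually prove Theorem \ref{multiplicity}: it is stated as a recalled result, imported from the standard $GL_m$-representation-theoretic machinery of \cite{Dr} (and its use in \cite{OSV}), so there is no in-paper argument to compare yours against. Your proposal reconstructs the standard proof correctly: multiplicity equals the dimension of the weight-$\langle\lambda\rangle$ highest weight space, the $f_{T_{\langle\lambda\rangle}}$ span that space in the free module $F_m^n$ (this is the content of \cite[Theorem 12.4.12]{Dr}, extended factorwise to $GL_m^{k}$), and complete reducibility makes the projection $\pi$ carry the highest weight space of $F_m^n$ onto that of $F_m^n(A)$, giving $\tilde{m}_{\langle\lambda\rangle}=\dim\pi(\mathrm{span}\{f_{T_{\langle\lambda\rangle}}\})$. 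You correctly flag the one step that genuinely needs checking, namely that the $f_{T_{\langle\lambda\rangle}}$ exhaust the highest weight space in the multigraded setting rather than a proper subspace; this holds because the $k$ tensor factors contribute independent highest weight conditions and the count matches the number of (standard) multitableaux. In short, your argument is sound and supplies exactly the justification the paper omits.
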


 \section{The $\langle n \rangle$-cocharacter for $(G,*)$-algebras} 

From now on, we consider $G=\{g_1=1,g_2,\ldots,g_k\}$ a finite abelian group of order $k$. For an integer $n \in \mathbb{N},$ we write $n=n_1+n_2+\cdots+n_{2k},$ where each $n_i$ is a non-negative integer, for $1\le i \le 2k$ and denote by $\langle n \rangle =(n_1, n_2, \ldots, n_{2k})$ a composition of $n$ into $2k$ parts. A multipartition $\langle \lambda \rangle=(\lambda_1, \lambda_2, \ldots, \lambda_{2k})\vdash \langle n \rangle$ is such that $\lambda_i\vdash n_i$ for $1 \le i\le 2k$ and we denote by $\langle \lambda \rangle \vdash n$ when $\langle \lambda \rangle \vdash \langle n \rangle,$ for some composition $\langle n \rangle$ of $n. $ 

Let $P_{\langle n \rangle}$ be the space of multilinear $(G,*)$-polynomials  where the first $n_1$ variables are symmetric in homogeneous degree $1,$ the next $n_2$ variables are skew of homogeneous degree $1,$ and so on so that the penultimate $n_{2k-1}$ variables are symmetric of homogeneous degree $g_k$ and the last $n_{2k}$ variables are skew of homogeneous degree $g_k.$ 

Note that there are
$\displaystyle\binom{n}{\langle n \rangle}:=\displaystyle\binom{n}{ n_1, \ldots, n_{2k} }$ subspaces isomorphic to $P_{\langle n \rangle}$ in $P_n^{(G,*)}$. In fact, we have
\begin{equation} \label{pn-}
P_n^{(G,*)} \cong \displaystyle \bigoplus_{\langle n \rangle } \displaystyle\binom{n}{\langle n \rangle} P_{\langle n \rangle} .
\end{equation}

We consider the quotient space $$P_{\langle n \rangle}(A) = \frac{P_{\langle n \rangle}}{P_{\langle n \rangle}\cap \IdG(A)}$$ and define $c_{\langle n \rangle}(A)=\dim_F P_{\langle n \rangle}(A)$ as the $\langle n \rangle $-codimension of $A$. 

According to (\ref{pn-}), the relationship between the $n$-th $(G,*)$-codimension of $A$ and its $\langle n \rangle$-codimension is given by
\begin{equation} \label{293-}
		c_n^{(G,*)}(A)= \underset{\langle n \rangle }{\sum} \displaystyle\binom{n}{\langle n \rangle } c_{\langle n \rangle}(A). 
	\end{equation}

Notice that there is a natural left action of the group $S_{\langle n \rangle}:= S_{n_1}\times\cdots \times S_{n_{2k}}$ on $P_{\langle n \rangle},$ where $S_{n_i}$ acts by permuting
the corresponding variables associated with $n_i,$ $1\le i \le 2k.$ Since $P_{\langle n \rangle} \cap \IdG(A)$ is invariant under this
action, then $P_{\langle n \rangle}(A)$ inherits a structure of $S_{\langle n \rangle}$-module. It is known that the irreducibles $S_{\langle n \rangle}$-characters are
outer tensor product of irreducible $S_{n_i}$-characters which are in one-to-one correspondence between multipartitions
$\lambda_i \vdash n_i.$ Hence, we consider $\chi_{\langle \lambda \rangle}=\chi_{\lambda_1} \otimes\cdots \otimes \chi_{\lambda_{2k}}$ the irreducible $S_{\langle n \rangle}$-character associated to a multipartition
$\langle \lambda \rangle := (\lambda_1, \ldots, \lambda_{2k}) \vdash \langle n \rangle,$ where $\chi_{\lambda_i}$ is the irreducible $S_{n_i}$-character associated to $\lambda_i.$ Moreover, its degree
is given by $d_{\langle \lambda \rangle}=d_{\lambda_1} \cdots d_{\lambda_{2k}},$ where $d_{\lambda_i}$ is the degree of $\chi_{\lambda_i}$. By complete reducibility we may consider
\begin{equation}\label{eq1coca-}\chi_{\langle n \rangle}(A)=\sum\limits_{\langle \lambda \rangle \vdash \langle n \rangle} m_{\langle \lambda \rangle} \chi_{\langle \lambda \rangle},\end{equation}
the decomposition of the $\langle n \rangle$-character of the space $P_{\langle n \rangle}(A)$ into irreducible, called $\langle n \rangle$-cocharacter of $A,$
where $m_{\langle \lambda \rangle}$ is the multiplicity of $\chi_{\langle \lambda \rangle}$. 

For all possibilities $(n_{i_{1}},\ldots , n_{i_{2k}}),$ $ \ldots ,$ $ (n_{j_{1}},\ldots , n_{j_{2k}})$ of sums of $2k$ non-negative integers equal to $n$ we will consider the set $\{\chi_{\langle n \rangle  }(A)\mid \langle n\rangle =(n_1, \ldots, n_{2k})  \}$ of all non-zero $\langle n \rangle $-cocharacters of $A$.

By (\ref{eq1coca-}) we notice that
 \begin{equation} \label{cmultin-}
     c_{\langle n \rangle }(A)= \chi_{\langle n \rangle}(A)(1)=\sum\limits_{\langle \lambda \rangle \vdash \langle n \rangle} m_{\langle \lambda \rangle} d_{\langle \lambda \rangle}.
 \end{equation}

 To obtain more precise information about the multiplicities $m_{\langle \lambda \rangle}$ that appear in the decomposition of the $\langle n \rangle$-cocharacter $\chi_{\langle n \rangle}(A)$ described in (\ref{eq1coca-}), we use the representation theory of the general linear group $GL_m$ in terms of $(G, *)$-algebras. The
details can be checked in [\cite[Section 12.4]{Dr} and in \cite{OSV}.

For $m \ge 1,$ define $X^m=
\bigcup\limits_{g\in G}(Y^{(g)})^m\cup (Z^{(g)})^m$, where for $g\in G$, we consider
$$(Y^{(g)})^m=\{y_{1,g}, \ldots, y_{m,g}\} \;\;\mbox{and}\;\;(Z^{(g)})^m=\{z_{1,g}, \ldots, z_{m,g}\}.$$ 

Denote by $F_m:=F\langle X^m|G,*\rangle$ the free associative $(G,*)$-algebra generated by $X^m$ over $F.$ Define $F_m^n$ the subspace of homogeneous polynomials in $F_m$ with degree $n \ge m$ and notice that the group $GL_m^{2k}:= GL_m\times \cdots \times GL_m,$ the direct product of $2k$-copies of $GL_m,$ acts diagonally on $F_m^n.$ Since $F_m^n\cap \IdG(A)$ is invariant under this action, we have that the space

 $$F_m^n(A)=\frac{F_m^n}{F_m^n\cap \IdG(A)}$$ has a structure of $GL_m^{2k}$-module. Hence, we can consider $\psi_n^{(G,*)}(A)$ its $GL_m^{2k}$-character, called $n$th $GL_m^{2k}$-cocharacter of $A.$ It is known (see \cite{Dr})  that there exists a one-to-one correspondence between irreducible $GL_m^{2k}$-modules and multipartitions $\lambda=(\lambda_1, \ldots, \lambda_{2k})\vdash \langle n \rangle,$ where $\lambda_i$ is a multipartition of $n_i$ with at most m parts, for $1 \le i \le 2k.$ Since $char(F)=0,$ by complete reducibility, we may write 
 
 \begin{equation} \label{eq2coca-}
\psi_n^{(G,*)}(A)=  \displaystyle \sum_{\langle n\rangle  } \underset{\scriptsize{\begin{array}{c}
\gen{\lambda}\vdash \gen{n} \\
			h( \lambda )\leq m
	\end{array}}}{\sum} \widetilde{m}_{\gen{\lambda}}\psi_{\gen{\lambda}},
\end{equation}  
where $\psi_{\langle \lambda \rangle}$ is the irreducible $GL_m^{2k}$-character associated to
the multipartition $\langle \lambda \rangle$ and $h(\langle \lambda \rangle)$ is the maximum value of the heights $h(\lambda_i),$ $1 \le i \le 2k,$ of the Young diagrams corresponding to the multipartitions $\lambda_i\vdash n_i.$ 

\begin{teorema}\label{multip-}
If $\chi_{\langle n \rangle}(A)$ and $\psi_n^{(G, *)}(A)$ are the $\langle n \rangle$-cocharacter and the $GL_m^{2k}$-cocharacter of $A$ as given in (\ref{eq1coca-}) and (\ref{eq2coca-}), respectively, then $m_{\langle \lambda \rangle}=\tilde{m}_{\langle \lambda \rangle}$ for all multipartitions $\langle \lambda\rangle\vdash \langle n \rangle$ such that $h(\langle \lambda \rangle)\le m.$
	
\end{teorema}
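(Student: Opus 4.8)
The plan is to recover the $\langle n \rangle$-cocharacter $\chi_{\langle n \rangle}(A)$ from the $GL_m^{2k}$-cocharacter $\psi_n^{(G,*)}(A)$ by restricting the latter to a suitable multilinear weight space, in exact parallel with the proof of Theorem \ref{multip} for $G$-graded algebras; the only structural change is that the $k$ types of homogeneous variables are now replaced by the $2k$ types coming from the symmetric and skew variables $y_{i,g}, z_{i,g}$. First I would isolate, inside $F_m^n$, the multihomogeneous subspace $M_{\langle n \rangle}$ of those polynomials that are linear in each of the first $n_i$ variables of the $i$-th type and involve none of the remaining variables of that type, for every $1 \le i \le 2k$; this is precisely the $GL_m^{2k}$-weight space of $F_m^n$ for the weight $\mu = ((1^{n_1}), \ldots, (1^{n_{2k}}))$. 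There is an evident linear isomorphism $M_{\langle n \rangle} \cong P_{\langle n \rangle}$, and because $\IdG(A)$ is multihomogeneous (being generated by multilinear polynomials in characteristic zero) this isomorphism descends to an isomorphism $M_{\langle n \rangle}(A) \cong P_{\langle n \rangle}(A)$ of the corresponding quotients.

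Next I would match the group actions. The symmetric group $S_{n_i}$ embeds in the $i$-th copy of $GL_m$ as the permutation matrices fixing the last coordinates, so $S_{\langle n \rangle} = S_{n_1}\times \cdots \times S_{n_{2k}}$ sits inside $GL_m^{2k}$, and under $M_{\langle n \rangle}\cong P_{\langle n \rangle}$ the restriction of the $GL_m^{2k}$-action to this subgroup coincides with the natural $S_{\langle n \rangle}$-action on $P_{\langle n \rangle}$. Thus $P_{\langle n \rangle}(A)$ is, as an $S_{\langle n \rangle}$-module, the $\mu$-weight space of $F_m^n(A)$. The representation-theoretic core is then the classical identification, in each factor, of the $(1^{n_i})$-weight space of the irreducible Weyl module associated with $\lambda_i$ (for $h(\lambda_i)\le m$) with the Specht module for $\lambda_i$: the $S_{n_i}$-action by permutation matrices realizes the irreducible $S_{n_i}$-character $\chi_{\lambda_i}$, while multipartitions with some $h(\lambda_i)>m$ contribute nothing to the weight $\mu$. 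Taking outer tensor products over the $2k$ factors, the $\mu$-weight space of $\psi_{\langle \lambda \rangle}$ is isomorphic as an $S_{\langle n \rangle}$-module to $\chi_{\langle \lambda \rangle}=\chi_{\lambda_1}\otimes\cdots\otimes\chi_{\lambda_{2k}}$ whenever $h(\langle \lambda \rangle)\le m$, and vanishes otherwise. Restricting the decomposition (\ref{eq2coca-}) to the weight $\mu$ therefore presents the $S_{\langle n \rangle}$-character of $P_{\langle n \rangle}(A)$ as $\sum \tilde{m}_{\langle \lambda \rangle}\chi_{\langle \lambda \rangle}$, summed over $\langle \lambda \rangle\vdash \langle n \rangle$ with $h(\langle \lambda \rangle)\le m$. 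Comparing with (\ref{eq1coca-}) and invoking uniqueness of multiplicities from complete reducibility yields $m_{\langle \lambda \rangle}=\tilde{m}_{\langle \lambda \rangle}$ for all such $\langle \lambda \rangle$.

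The step I expect to be the main obstacle is this weight-space identification: one must check carefully that passing to the $\mu$-weight space commutes with quotienting by $\IdG(A)$, that the $S_{n_i}$-structure inherited from the permutation matrices is genuinely the Specht module (and not a twist or dual), and — to make the weight $(1^{n_i})$ available in each factor — that one may work with $m$ in the admissible range, which for $G$-graded algebras is handled exactly as in Theorem \ref{multip}. Once the single-factor correspondence is pinned down, the outer tensor product over the $2k$ symmetric and skew types, together with the comparison of the two decompositions, is routine bookkeeping that parallels the $G$-graded case verbatim, so the whole argument reduces to transporting the proof of Theorem \ref{multip} to the present $2k$-type setting.
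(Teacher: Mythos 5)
Your argument is correct and is essentially the standard weight-space proof (identify $P_{\langle n\rangle}(A)$ with the $((1^{n_1}),\ldots,(1^{n_{2k}}))$-weight space of $F_m^n(A)$, use that this weight space of each irreducible $GL_m$-module of shape $\lambda_i$ with $h(\lambda_i)\le m$ is the Specht module for $\lambda_i$, and take outer tensor products over the $2k$ variable types). The paper itself gives no proof of Theorem \ref{multip-}, deferring to \cite[Section 12.4]{Dr} and \cite{OSV}, and the argument given there is exactly the one you reconstruct, so there is nothing to compare beyond noting agreement.
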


From now on, it will be convenient to use the notation  
\begin{equation}\label{eq 3}
\langle \lambda \rangle = ((\lambda_{i_1})_{{g_{i_1}^+}}, (\lambda_{i_2})_{g_{i_2}^-}, \ldots  )
\end{equation}
where  $(\lambda_{i_1})_{{g_{i_1}^+}}$  means that $(\lambda_{i_1})$ is a multipartition of $n_{2i_1-1}$, $(\lambda_{i_2})_{g_{i_2}^-}$ means that $(\lambda_{i_2})$ is a multipartition of $n_{2i_2}$ and so on.  Similarly, the composition $(n_1, \ldots, n_{2k})$ of $n$ will be denoted by $(n_{1_{1^+}},n_{2_{1^-}}, \ldots, {n_{2k}}_{g_k^-})$. Also,
we omit the empty multipartitions in this notation.

Moreover, using [\cite{Dr}, Theorem 12.4.12], we can see that each irreducible $GL^{2k}_m$-module is generated by a non-zero polynomial $f_{\langle \lambda \rangle}$  called the
highest weight vector associated to the multipartition $\langle \lambda \rangle$ and it is given by
$$\begin{array}{cclc}	
	
	f_{{\langle \lambda \rangle}} &=&\prod\limits_{j=1}^{(\lambda_1)_1}St_{{h_j}(\lambda_1)}(y_{1,1}, \ldots, y_{{h_j}(\lambda_1),1})  \cdots \prod\limits_{j=1}^{(\lambda_{2k})_1}St_{{h_j}(\lambda_{2k})}(z_{1,g_k}, \ldots, z_{{h_j}(\lambda_{2k}),g_k})
\end{array}$$
where $St_r(x_1, \ldots , x_r) = \sum\limits_{\sigma \in S_r}\mbox{sgn}(\sigma)x_{\sigma(1)}\cdots x_{\sigma(r)}$ is the standard polynomial of degree $r$ and $h_j(\lambda_{i})$  represents the height of the $j$th column of Young table $T_{\lambda_{i}}$ associated to the partition $\lambda_i\vdash n_i.$ It is known that
every polynomial $f_{\langle \lambda \rangle}$ is linearly generated by the polynomials $f_{T_{\langle \lambda \rangle}}$ as we will see below.

For all $i = 1, \ldots ,~ 2k$ we denote a tableau of shape $\lambda_i \vdash n_i$ by $T_{\lambda_i}$ and for a multipartition $\langle \lambda \rangle =
(\lambda_1, \ldots, \lambda_{2k}) \vdash \langle n \rangle$ we consider the multitableau $T_{\langle \lambda \rangle} = (T_{\lambda_1} , ~\ldots ,~ T_{\lambda_{2k}} )$ formed by $2k$ Young tableaux, which
is filled by placing the numbers from $1$ to $n$ in ascending order from top to bottom. We define the standard
multitableau to be the one such that the integers $1, ~\ldots ,~ n$ in this order, fill in from top to bottom, column by
column, the tableau $T_{\lambda_{1}}$ to the tableau $T_{\lambda_{2k}} .$

Consider $\sigma \in S_n$ the only permutation that changes the standard
multitableau to the multitableau $T_{\langle \lambda \rangle}.$ The highest weight vector $f_{T_{\langle \lambda \rangle}}$ corresponding to the multitableau
$T_{\langle \lambda \rangle}$ is defined as
$f_{T_{\langle \lambda \rangle}}:= f_{{\langle \lambda \rangle}}\sigma^{-1},$  
where the right action of $S_n$ on $F^n_m(A)$ is defined by exchanging the places of the variables in each monomial.   

The next result relates the highest weight vectors  to the multiplicities in Theorem \ref{multip-}. 

\begin{teorema}\label{multiplicity-}
	The multiplicity $\tilde{m}_{\langle \lambda \rangle}$ in (\ref{eq2coca-}) is non-zero if, and only if, there exists a multitableau $T_{\langle \lambda \rangle},$ such that $f_{T_{\langle \lambda \rangle}} \notin \IdG(A).$ 
	Moreover, $\tilde{m}_{\langle \lambda \rangle}$ is equal to the maximum number of highest weight vectors  associated to the multitableaux of type $\langle \lambda \rangle$ that are linearly independent in $F_{m}^n(A).$ 
	
\end{teorema}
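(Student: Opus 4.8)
The plan is to run the standard $GL_m$-representation-theoretic argument, exactly as in the proof of Theorem \ref{multiplicity} for $G$-graded algebras, after observing that the $(G,*)$-setting is formally the situation of $2k$ commuting copies of $GL_m$: for each $g\in G$ one copy acts on the symmetric variables $y_{1,g},\ldots,y_{m,g}$ and a second copy acts on the skew variables $z_{1,g},\ldots,z_{m,g}$. A $GL_m^{2k}$-substitution replaces each variable by a linear combination of variables of the \emph{same} type (same degree, same parity), so it is an endomorphism of $F_m$ preserving the grading and commuting with $*$; this is precisely why symmetric and skew variables cannot be mixed and why the structural facts recalled from \cite{Dr} for a single $GL_m$ and its tensor products apply verbatim with $k$ replaced by $2k$.

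First I would invoke complete reducibility (available since $\mathrm{char}(F)=0$). Writing $J:=F_m^n\cap\IdG(A)$, which is a $GL_m^{2k}$-submodule of $F_m^n$ by the remark above, I would choose a $GL_m^{2k}$-invariant complement $C$ with $F_m^n=J\oplus C$, so that $F_m^n(A)\cong C$ as $GL_m^{2k}$-modules and, by (\ref{eq2coca-}), the multiplicity of the irreducible character $\psi_{\langle\lambda\rangle}$ in $C$ is exactly $\tilde m_{\langle\lambda\rangle}$.

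The key input is the description of highest weight vectors. By \cite[Theorem 12.4.12]{Dr}, each irreducible polynomial $GL_m^{2k}$-module of type $\langle\lambda\rangle$ has a one-dimensional space of highest weight vectors of the corresponding weight, spanned by an $S_n$-translate of $f_{\langle\lambda\rangle}$; consequently, for any completely reducible polynomial module $V$ the dimension of its space $HW_{\langle\lambda\rangle}(V)$ of highest weight vectors of weight $\langle\lambda\rangle$ equals the multiplicity of $\psi_{\langle\lambda\rangle}$ in $V$. Moreover, in the free module $F_m^n$ this highest weight space is precisely the linear span of the polynomials $f_{T_{\langle\lambda\rangle}}=f_{\langle\lambda\rangle}\sigma^{-1}$ as $T_{\langle\lambda\rangle}$ ranges over all multitableaux of shape $\langle\lambda\rangle$. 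Combining these two facts: $\tilde m_{\langle\lambda\rangle}$ equals the multiplicity of $\psi_{\langle\lambda\rangle}$ in $C\cong F_m^n(A)$, which equals $\dim HW_{\langle\lambda\rangle}(F_m^n(A))$. Since $F_m^n=J\oplus C$ and highest weight vectors lying in $J$ map to $0$ while those in $C$ map isomorphically, $HW_{\langle\lambda\rangle}(F_m^n(A))$ is the image of $HW_{\langle\lambda\rangle}(F_m^n)=\mathrm{span}\{f_{T_{\langle\lambda\rangle}}\}$ under the projection $F_m^n\to F_m^n(A)$. Hence $\tilde m_{\langle\lambda\rangle}$ equals the maximal number of polynomials $f_{T_{\langle\lambda\rangle}}$ that remain linearly independent modulo $\IdG(A)$, which is the second assertion; the first assertion, that $\tilde m_{\langle\lambda\rangle}\neq 0$ iff some $f_{T_{\langle\lambda\rangle}}\notin\IdG(A)$, is the special case distinguishing zero from positive dimension.

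The main obstacle is the second highlighted fact: that the translates $f_{T_{\langle\lambda\rangle}}$ \emph{exhaust} all highest weight vectors of weight $\langle\lambda\rangle$ in the free algebra $F_m^n$, rather than merely being contained in that space. This is where the precise structure theory of \cite{Dr} is needed, checking that the construction of $f_{\langle\lambda\rangle}$ as a product of standard polynomials, together with the right $S_n$-action realizing the passage to an arbitrary multitableau, yields a spanning set for the full weight-$\langle\lambda\rangle$ highest weight space, and that splitting each homogeneous component into its symmetric and skew parts indexed by the $2k$ copies of $GL_m$ does not disturb this description. Once this is in place, the remainder is the formal complete-reducibility bookkeeping carried out above.
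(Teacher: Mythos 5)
Your argument is correct and is precisely the standard proof of this fact: the paper itself states Theorem \ref{multiplicity-} without proof, treating it as known from the $GL_m$-representation theory it cites (\cite{Dr}, \cite{OSV}), and the intended justification is exactly the complete-reducibility and highest-weight-space bookkeeping you carry out, including the one genuinely nontrivial input you correctly isolate, namely that the translates $f_{T_{\langle\lambda\rangle}}$ span the full weight-$\langle\lambda\rangle$ highest weight space of $F_m^n$ (Theorem 12.4.12 of \cite{Dr}, applied with $2k$ copies of $GL_m$). No gap to report.
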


\section{Characterization of $G$-graded algebras with multiplicities bounded by a constant}

\hspace{0.6cm} In $2018$, Giambruno, Polcino Milies and Valenti \cite{GPV} characterized the varieties of $G$-graded algebras whose multiplicities in their $\langle n \rangle$-cocharacter are bounded by $1$ as in the next result.

\begin{teorema}\cite{GPV} Let $G$ be a finite group and $A$ a $G$-graded algebra over a field $F$ of characteristic zero. Let  $$\chi_{\langle n \rangle}(A) =\sum\limits_{\langle \lambda \rangle \vdash \langle n \rangle}m_{\langle \lambda \rangle}\chi_{\langle \lambda \rangle}$$ be the  $\langle n \rangle$-th cocharacter of $A.$ Then $m_{\langle \lambda \rangle} \le 1$ for all $n$ and for all $\langle \lambda \rangle \vdash \langle n \rangle$ if and only if there exist scalars $\alpha, \beta, \gamma$ and $\delta ,$ with $(\alpha,\beta)\neq(0,0)$ and $(\gamma, \delta)\neq (0,0),$ such that 
$A$ satisfies the identities
$$\begin{array}{cccc} \alpha x_{1,1}[x_{1,1}, x_{2,1}]+\beta[x_{1,1},x_{2,1}]x_{1,1} & \equiv & 0; \\ \mbox{and}\\ \gamma x_{1,g}x_{2,h}+\delta x_{2,h}x_{1,g} & \equiv & 0,
		
	\end{array}$$
for all $g,$ $h \in G$ with $g \neq h.$
\end{teorema}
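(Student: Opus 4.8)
The plan is to translate the multiplicity condition into the language of highest weight vectors. By Theorem \ref{multip} we have $m_{\langle\lambda\rangle}=\tilde m_{\langle\lambda\rangle}$, and by Theorem \ref{multiplicity} this number equals the maximal number of linearly independent highest weight vectors $f_{T_{\langle\lambda\rangle}}$ of shape $\langle\lambda\rangle$ in $F_m^n(A)$. Hence $m_{\langle\lambda\rangle}\le 1$ for all $\langle\lambda\rangle$ if and only if, for every shape, any two such highest weight vectors are linearly dependent modulo $\Idg(A)$. I would verify both implications against this single criterion.

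For the implication ($\Rightarrow$), I would first restrict attention to multipartitions concentrated in degree $1$, that is $\langle\lambda\rangle=(\lambda,\emptyset,\ldots,\emptyset)$. For these, $S_{\langle n\rangle}=S_n$ and $P_{\langle n\rangle}(A)$ is $S_n$-isomorphic to the ordinary $P_n(A^{(1)})$ (since $A^{(1)}$ is a subalgebra and degree-$1$ variables evaluate there), so $m_{\langle\lambda\rangle}$ coincides with the ordinary multiplicity $m_\lambda$ of $A^{(1)}$. The hypothesis then gives $m_\lambda\le 1$ for all $\lambda$, and the Ananin--Kemer theorem [\ref{A}] provides a nontrivial $(\alpha,\beta)$ with $\alpha[x_1,x_2]x_2+\beta x_2[x_1,x_2]\equiv 0$ on $A^{(1)}$; relabelling the variables yields the first displayed identity. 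Next, for each pair $g\neq h$ I would take the two-box shape with one box in degree $g$ and one in degree $h$. Here $S_{\langle n\rangle}$ is trivial, so $m_{\langle\lambda\rangle}=\dim_F P_{\langle n\rangle}(A)$, and the bound $\le 1$ forces the two-dimensional space spanned by $x_{1,g}x_{2,h}$ and $x_{2,h}x_{1,g}$ to collapse to dimension at most one, producing a nontrivial relation $\gamma_{g,h}x_{1,g}x_{2,h}+\delta_{g,h}x_{2,h}x_{1,g}\equiv 0$.

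The delicate point, which I expect to be the main obstacle, is to replace the pairwise scalars by one uniform pair $(\gamma,\delta)$ valid for all $g\neq h$. Whenever a pair is non-degenerate, writing $x_{1,g}x_{2,h}\equiv q_{g,h}\,x_{2,h}x_{1,g}$ one gets $q_{g,h}q_{h,g}=1$ by swapping twice; substituting a product of two homogeneous elements for a single variable and comparing the two associations of a triple product shows that $q$ is multiplicative in each argument, i.e.\ it behaves like a bicharacter of the abelian group $G$. These constraints pin the common value down to $q=\pm 1$ (commuting or anticommuting), while the degenerate pairs, in which one of the two monomials already lies in $\Idg(A)$, impose no restriction and absorb any chosen scalar. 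Selecting this common value gives the uniform second identity, but the bookkeeping of which pairs are degenerate and how homogeneous products may vanish is where the argument is most technical.

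For the implication ($\Leftarrow$), assume both identities. The second identity reorders variables of distinct degrees up to a scalar, and, by substituting a product of two degree-$g$ variables for a variable of degree $g^{2}$, it also yields commutation relations among same-degree variables for every $g\neq 1$ (since then $g^{2}\neq g$). These relations collapse each homogeneous block of degree $g\neq 1$ to multiplicity at most one, whereas the first identity is precisely the Ananin--Kemer condition controlling the degree-$1$ block through $A^{(1)}$. Fixing an order of the blocks across the degrees, one sees that every highest weight vector reduces modulo $\Idg(A)$ to a normal form unique up to a scalar, so any two of the same shape are proportional and $m_{\langle\lambda\rangle}\le 1$. A secondary difficulty here is to make this normal-form reduction precise for arbitrarily large multipartitions, verifying that the standard-polynomial blocks interact correctly with the commutation relations so that no residual two-dimensional space survives.
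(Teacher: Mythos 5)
First, a point of reference: the paper does not prove this statement at all --- it is quoted from \cite{GPV} as background --- so there is no internal proof to compare your attempt against, and I can only judge it on its own terms. The first two steps of your forward direction are sound: restricting to multipartitions concentrated in degree $1$ identifies $m_{(\lambda,\emptyset,\ldots,\emptyset)}$ with the ordinary multiplicities of $A^{(1)}$, so Ananin--Kemer gives the first identity, and the two-box shapes in degrees $g\neq h$ (where $S_{\langle n\rangle}$ is trivial and the multiplicity is $\dim_F P_{\langle n\rangle}(A)$) give a nontrivial relation $\gamma_{g,h}x_{1,g}x_{2,h}+\delta_{g,h}x_{2,h}x_{1,g}\equiv 0$ for each pair. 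This is the standard argument and matches how the paper proves the analogous Theorem \ref{teoida}.

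The step that fails is the uniformization, which you correctly flag as delicate but then resolve incorrectly: the claim that the constraints pin the ratio $q$ down to $\pm 1$ is false. Take $A=M_3(F)$ over a field containing a primitive cube root of unity $\omega$, with its fine (Pauli) $\mathbb{Z}_3\times\mathbb{Z}_3$-grading: every homogeneous component is one-dimensional, so every $P_{\langle n\rangle}(A)$ has dimension at most one and all $m_{\langle\lambda\rangle}\le 1$; yet $u_{(1,0)}u_{(0,1)}=\omega\, u_{(0,1)}u_{(1,0)}$ while $u_{(2,0)}u_{(0,1)}=\omega^2 u_{(0,1)}u_{(2,0)}$, so the required conditions $\delta=-\gamma\omega$ and $\delta=-\gamma\omega^2$ force $(\gamma,\delta)=(0,0)$. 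An alternating bicharacter of an abelian group need not take values in $\{\pm1\}$. Consequently the statement with a single uniform pair $(\gamma,\delta)$, as literally transcribed here, is false; the intended reading (consistent with the subscripted scalars $\alpha_{g,h}$ in Theorems \ref{teoida} and \ref{resultado} of this paper) lets $(\gamma,\delta)$ depend on the pair, and your uniformization paragraph should simply be deleted rather than repaired. Separately, your backward direction stops exactly where the real work begins: substituting $x_{3,g}x_{4,g}$ for a variable of degree $g^2\neq g$ yields only certain degree-three relations among same-degree variables, and deducing $m_{(\mu_g)}\le 1$ from these requires a case analysis on the scalars together with explicit dimension bounds on $P_{(n_g)}(A)$ --- precisely the content of Lemmas \ref{lema1F} through \ref{lema4F} in the paper's $(G,*)$ analogue. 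Asserting that every highest weight vector "reduces to a normal form unique up to a scalar" is the conclusion to be proved, not an argument for it.
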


Here, we extend this result by providing a characterization of $G$-graded algebras whose multiplicities appearing in the $\langle n \rangle$-cocharacter  are bounded by a constant $q\ge 1$ via identities. In the ordinary case, the authors characterized the $T$-ideals of the free associative algebra whose multiplicities
in the cocharacter are bounded by a constant as follows: 

\begin{teorema}\cite{MRZ}\label{teoMRZ}
Let $\mathcal{V}$ be a variety of algebras and let $\chi_n(\mathcal{V}) = \sum\limits_{\lambda\vdash n}m_{\lambda}\chi_{\lambda}$ 
be its $n$-th cocharacter. The following conditions are equivalent:

\begin{itemize}
    \item[1)] There exists a constant $q$ such that, for all $n$ and for all $\lambda \vdash n,$  $m_\lambda \le q$; 
    \item[2)] $UT_2 \notin \mathcal{V}$;
    \item[3)] 
$\mathcal{V}$ satisfies a polynomial of the form 
$$\sum\limits_{i=0}^n\alpha_iy^ixy^{n-i}\equiv 0.$$
\end{itemize}
\end{teorema}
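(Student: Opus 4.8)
I would prove the cyclic chain $1)\Rightarrow 2)\Rightarrow 3)\Rightarrow 1)$. The implications $1)\Rightarrow 2)$ and $3)\Rightarrow 2)$ are soft; the substance lies in producing the identity from the absence of $UT_2$ (the step $2)\Rightarrow 3)$) and in extracting a multiplicity bound from the identity (the step $3)\Rightarrow 1)$). The common anchor for the two-variable analysis is the computation that, substituting $y\mapsto ae_{11}+be_{22}$ and $x\mapsto e_{12}$, one gets $y^ixy^{n-i}\mapsto a^ib^{n-i}e_{12}$; the coefficients form a Vandermonde system in $a,b$, so the $n+1$ elements $y^ixy^{n-i}$ are linearly independent modulo $\Id{UT_2}$, and no nonzero $\sum_{i=0}^n\alpha_iy^ixy^{n-i}$ is an identity of $UT_2$. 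This already yields $3)\Rightarrow 2)$.

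\emph{$1)\Rightarrow 2)$.} First I would record the classical fact that $\chi_n(UT_2)$ has unbounded multiplicities: its nonzero components are indexed by two-row partitions $(p,q)$, and the near-row multiplicities $m_{(p,1)}$ grow linearly with $p$ (a counting argument comparing $c_n(UT_2)\sim n2^{n-1}$ with the total degree $\sum_{\lambda}d_{\lambda}$ of the two-row irreducibles, which is only of order $2^{n}/\sqrt{n}$, already forces some multiplicity to tend to infinity). By the monotonicity of multiplicities under variety inclusion — the ungraded analogue of Remark \ref{obscaracter}, namely that $UT_2\in\mathcal{V}$ gives $\Id{\mathcal{V}}\subseteq\Id{UT_2}$ and hence $m_{\lambda}(UT_2)\le m_{\lambda}(\mathcal{V})$ — membership of $UT_2$ would force the multiplicities of $\mathcal{V}$ to be unbounded. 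Contrapositively, $1)\Rightarrow 2)$.

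\emph{$3)\Rightarrow 1)$.} Assume $\mathcal{V}$ satisfies $\sum_{i=0}^n\alpha_iy^ixy^{n-i}\equiv 0$ with some $\alpha_s\neq 0$. Multiplying on both sides, the $T$-ideal of this polynomial contains $\sum_{i=0}^n\alpha_iy^{a+i}xy^{b+n-i}$ for all $a,b\ge 0$; these are linear recurrences of order $n+1$ among the monomials $y^ixy^{j}$, so in the relatively free algebra $F\langle X\rangle/\Id{\mathcal{V}}$ the space $\spam\{y^ixy^{N-i}:0\le i\le N\}$ has dimension at most $n$ for every $N$. Thus the two-variable, $x$-linear data of $\mathcal{V}$ is uniformly bounded. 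To convert this into a bound on every $m_{\lambda}$, I would use the description of $m_{\lambda}$ as the maximal number of linearly independent highest weight vectors (the ungraded analogue of Theorem \ref{multiplicity}), each a product of standard polynomials $St_{h_j}$. The straightening relations above bound, in each multidegree, the number of independent such products by a constant $q=q(n)$. The delicate point is to lift the bound from the two-row situation to partitions with many rows; I expect to handle this by induction on the number of rows, using the recurrence to absorb an arbitrarily long first row into the shorter ones, so that only boundedly many highest weight vectors survive.

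\emph{$2)\Rightarrow 3)$ (main obstacle).} I would argue the contrapositive: if $\mathcal{V}$ satisfies no nonzero identity of the stated form, then $UT_2\in\mathcal{V}$. The hypothesis says exactly that for every $n$ the $n+1$ elements $y^ixy^{n-i}$, $0\le i\le n$, are linearly independent in $F\langle X\rangle/\Id{\mathcal{V}}$, matching their behaviour modulo $\Id{UT_2}$ established by the Vandermonde computation above. The task is then to promote this matching of two-variable data into the inclusion $\Id{\mathcal{V}}\subseteq\Id{UT_2}$, that is, into $UT_2\in\mathcal{V}$. My plan is to translate the independence of the $y^ixy^{n-i}$ for all $n$ into unbounded multiplicities of the two-row partitions $(p,q)$ in $\chi_n(\mathcal{V})$, and then to invoke that $UT_2$ generates the minimal variety carrying unbounded two-row multiplicities, so that this growth alone recovers the membership. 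Making the last step precise — extracting the full $T$-ideal inclusion from the asymptotics of a single family of two-variable polynomials — is where I expect the real difficulty to lie, and it is the crux of the theorem.
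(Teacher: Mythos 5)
First, a point of reference: the paper offers no proof of Theorem \ref{teoMRZ} at all --- the statement is imported from \cite{MRZ}, and even for its graded analogue (Theorem \ref{[ GRADUADA R. F. Vieira]2}) the hard implication is outsourced to \cite{GC} via Theorem \ref{multexc}. So your attempt can only be measured against the known proof. Two of your ingredients are sound and do appear in the paper in graded form: the Vandermonde computation showing that no nonzero $\sum_i\alpha_iy^ixy^{n-i}$ vanishes on $UT_2$ (this is exactly the argument of Lemma \ref{[GRADUATE R. F. Vieira 1]}), and the monotonicity $m_\lambda(UT_2)\le m_\lambda(\mathcal{V})$ combined with the unboundedness of the multiplicities of $\chi_n(UT_2)$, which gives $1)\Rightarrow 2)$.

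The gaps lie in the two implications that carry all the weight. Your $2)\Rightarrow 3)$ is circular: after establishing that the monomials $y^ixy^{n-i}$ stay independent modulo $\textnormal{Id}(\mathcal{V})$, you invoke the claim that ``$UT_2$ generates the minimal variety carrying unbounded two-row multiplicities'' to conclude $UT_2\in\mathcal{V}$; but that minimality statement is precisely the contrapositive of the implication $2)\Rightarrow 1)$ that your cycle is supposed to deliver via $2)\Rightarrow 3)\Rightarrow 1)$. Your $3)\Rightarrow 1)$ is likewise only a hope: the recurrence does bound $\dim\,\mbox{span}\{y^ixy^{N-i}\}$, hence the multiplicities of two-row shapes, but the passage to arbitrary partitions (``absorb the first row into the shorter ones'') is exactly where the theorem is hard and no mechanism is supplied --- highest weight vectors for shapes such as $(p,1^k)$, or with several long rows, are not controlled by the two-variable data alone. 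A cleaner arrangement, and the one the paper's graded version follows, is to prove $1)\Rightarrow 3)$ directly: for $n>q$ the $n$ highest weight vectors $y^a[y,x]y^{n-1-a}$ of shape $(n,1)$ must become linearly dependent modulo the $T$-ideal (this is \cite[Observation (a)]{MRZ}, quoted in the proof of Theorem \ref{teoGmult}); then $3)\Rightarrow 2)$ is your Vandermonde step, and the single remaining implication $2)\Rightarrow 1)$ is the genuine content of \cite{MRZ}, requiring structural information about varieties excluding $UT_2$ that nothing in your proposal recovers.
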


 In what follows, we present an analogue of Theorem \ref{teoMRZ} in the context of $G$-graded algebras. To reach our goal, in the next results of this section we consider  $G$ be a finite group and $A$ a $G$-graded algebra over a field $F$ of characteristic zero whose $\langle n \rangle$-th cocharacter is given as in  (\ref{eq1coca}).

\begin{teorema}\label{teoGmult}
 If there exists a constant $q$ such that $m_{\langle \lambda \rangle} \le q,$ then $A$ satisfies a polynomial of degree $n>q,$ of the form 
\begin{equation}\label{eqGid}f_g: = \sum\limits_{i=1}^n\alpha_i^gx^{i-1}_{1,1}x_{2,g}x^{n-i}_{1,1},\end{equation} for all  $g\in G,$ where $\alpha_i^g \in F$  are constants not all zero.
\end{teorema}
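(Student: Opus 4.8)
The plan is to translate the hypothesis $m_{\langle\lambda\rangle}\le q$ into a statement about highest weight vectors and then produce the desired identity by a standard multilinearization/genericity argument. First I would fix $g\in G$ and focus attention on multipartitions concentrated on the two components indexed by the identity $1$ and by $g$; specifically, for each $n$ consider the multipartition $\langle\lambda\rangle$ in which the component of degree $1$ carries the one-row partition $(n-1)$ (a single hook column of height $1$ repeated, i.e. a horizontal strip) while the component of degree $g$ carries a single box. In the notation of (\ref{eq 3-}) this is $\langle\lambda\rangle=((n-1)_1,(1)_g)$, whose highest weight vector $f_{\langle\lambda\rangle}$, by the formula given just before Theorem \ref{multiplicity}, is a product of height-$1$ standard polynomials in the variable $x_{1,1}$ together with one factor $x_{2,g}$. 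The point is that such a highest weight vector is, up to the right $S_n$-action, a linear combination of the monomials $x_{1,1}^{i-1}x_{2,g}x_{1,1}^{n-i}$, which is exactly the shape of (\ref{eqGid}).

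Next I would invoke Theorem \ref{multiplicity} (or \ref{multiplicity-}): the multiplicity $\widetilde m_{\langle\lambda\rangle}$, which by Theorem \ref{multip} equals $m_{\langle\lambda\rangle}$ once $m$ is large enough, is the maximal number of linearly independent highest weight vectors of type $\langle\lambda\rangle$ in $F_m^n(A)$. The space spanned by all highest weight vectors of this particular type $((n-1)_1,(1)_g)$ sits inside the $n$-dimensional space of (evaluations of) polynomials of the form $\sum_{i=1}^n\alpha_i^g x_{1,1}^{i-1}x_{2,g}x_{1,1}^{n-i}$. Since by hypothesis $m_{\langle\lambda\rangle}\le q$ for every $n$, the dimension of the image of this $n$-dimensional space in $F_m^n(A)$ is at most $q$. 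Therefore, as soon as $n>q$, the $n$ monomials $x_{1,1}^{i-1}x_{2,g}x_{1,1}^{n-i}$, $i=1,\dots,n$, are linearly dependent modulo $\Idg(A)$, which yields a nontrivial relation $\sum_{i=1}^n\alpha_i^g x_{1,1}^{i-1}x_{2,g}x_{1,1}^{n-i}\equiv 0$ with not all $\alpha_i^g$ zero. This is precisely $f_g\equiv0$.

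The main obstacle, and the step requiring care, is the identification of the highest weight vector of type $((n-1)_1,(1)_g)$ with the monomials appearing in (\ref{eqGid}): one must check that the only multilinear (in $x_{2,g}$) and homogeneous (in $x_{1,1}$) $G$-polynomials of this multidegree are spanned by $\{x_{1,1}^{i-1}x_{2,g}x_{1,1}^{n-i}\}_{i=1}^n$, and that evaluating linear dependence of these monomials on $A$ is equivalent to the corresponding linear dependence of the highest weight vectors in $F_m^n(A)$. Concretely, since $x_{2,g}$ appears to the first power and $x_{1,1}$ fills a single row, the relevant graded polynomials are determined by the position of $x_{2,g}$ among the copies of $x_{1,1}$, giving exactly these $n$ monomials; symmetrizing over the $x_{1,1}$'s (the Young symmetrizer for the one-row shape) acts as the identity on this span. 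Once this dictionary is in place, the dimension count above forces the linear dependence for $n>q$, and collecting the coefficients produces the constants $\alpha_i^g$. I would carry this out for each $g\in G$ separately, obtaining one identity $f_g$ of the stated form for every $g$, which completes the proof.
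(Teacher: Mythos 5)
For $g\neq 1$ your argument is essentially the paper's own proof: you take the multipartition $\langle\lambda\rangle=((n-1)_1,(1)_g)$, observe that the highest weight vectors attached to its multitableaux are exactly the $n$ monomials $x_{1,1}^{i-1}x_{2,g}x_{1,1}^{n-i}$, and conclude from Theorem \ref{multiplicity} (via Theorem \ref{multip}) that $m_{\langle\lambda\rangle}\le q<n$ forces a nontrivial linear dependence modulo $\Idg{A}$. That part is correct, and your ``dictionary'' worry in the last paragraph is already settled by the paper's explicit description of $f_{T_{\langle\lambda\rangle}}=f_{\langle\lambda\rangle}\sigma^{-1}$: each multitableau of shape $((n-1)_1,(1)_g)$ literally produces one of the $n$ monomials, so the span of all highest weight vectors of this type is the span of those monomials and its image in $F_m^n(A)$ has dimension at most $\widetilde m_{\langle\lambda\rangle}=m_{\langle\lambda\rangle}$.

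The gap is the case $g=1$, which you propose to treat ``separately'' by the same recipe. There the two variables $x_{1,1}$ and $x_{2,1}$ lie in the \emph{same} homogeneous component, so $((n-1)_1,(1)_1)$ is not a multipartition of the required form: the relevant objects are partitions $\lambda\vdash n$ of the neutral component with $h(\lambda)\le 2$, namely $(n)$ and $(n-1,1)$, each of whose weight spaces of content $(n-1,1)$ contributes to the span of the monomials $x_{1,1}^{i-1}x_{2,1}x_{1,1}^{n-i}$. Consequently the image of that $n$-dimensional space is bounded by $m_{(n)}+m_{(n-1,1)}\le 2q$, not by $q$ as you assert, and the monomials are no longer individually highest weight vectors of a single type. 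The conclusion still follows (take $n>2q$, which still gives an identity of degree $n>q$), but it requires this modified count; the paper sidesteps it by quoting Observation (a) of \cite{MRZ} for the ordinary algebra $A^{(1)}$, which is exactly the statement needed for $f_1$.
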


\begin{proof}
 Since $m_{\langle \lambda \rangle }\leq q$ for every multipartition $\langle \lambda \rangle$, the claim holds in particular for a multipartition of type $((\lambda_1)_{\; 1})$. By \cite[Observation (a)]{MRZ}, $A$ satisfies an identity of the form $$f_1:= \sum\limits_{i=1}^n\alpha_i^1x^{i-1}_{1,1}x_{2,1}x^{n-i}_{1,1}.$$

 Now take $n\ge q+1$ and $g\in G\setminus\{1\}$. Let $ \gen{\lambda} =((n-1)_1, (1)_g)$ be the multipartition whose associated highest weight vector is 
$f_{{\langle \lambda \rangle}}=x_{1,1}^{n-1}x_{2,g}. $ 
For each $i=1,\ldots, n$, consider the multitableau 
	$$T_{\langle \lambda \rangle}^i= \left(
	\begin{array}{|c|c|c|c|c|c|c|c|c| }
		\hline
		1   &2 & \cdots & i-1 & n & i+1 &\cdots  & n-1\\ 
		\hline
	\end{array}_{\; 1}, \begin{array}{|c| }
		\hline
		i  \\ 
		\hline
	\end{array}_{\; g}\right)$$
whose corresponding permutation is $\sigma_i = (n ~ i)$. The highest weight vector associated to the multitableau  $T_{\langle \lambda \rangle}^i$ is 
$$f_{T_{\langle \lambda \rangle}^i}=f_{\langle \lambda \rangle}\sigma_i^{-1}=x_{1,1}^{i-1}x_{2,g}x^{n-i}_{1,1}.$$

Since $m_{\langle \lambda \rangle}\le q< n,$  Theorem \ref{multiplicity-} guarantees that, for each $g\in G$, $g\neq 1$, there exist constants $\alpha_i^g,$ not all zero, such that 
$$f_g=\sum\limits_{i=1}^n\alpha_i^g f_{T_{\langle \lambda \rangle}^i}\equiv 0 ~(\mbox{mod} ~\Idg(A)). $$ 

	Thus, $A$ satisfies identities of the claimed form. 
\end{proof}

The next result relates varieties of algebras that satisfy  identities of the form $f_g:=\sum\limits_{i=1}^n\alpha_i^g x_{1,1}^{i-1}x_{2,g}x_{1,1}^{n-i}$ to the exclusion of algebras from the variety.

\begin{lema}\label{[GRADUATE R. F. Vieira 1]}  If, for every $g\in G$, the  $G$-graded algebra $A$ satisfies an identity of the form
$$f_g=\sum\limits_{i=1}^n\alpha_i^gx_{1,1}^{i-1}x_{2,g}x^{n-i}_{1,1},$$
with $\alpha_i^g$  not all zero, then $UT_2^{g} \notin \var^G(A)$, for all $g\in G$.   
\end{lema}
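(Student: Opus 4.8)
The plan is to prove the contrapositive in spirit: assume $UT_2^g \in \var^G(A)$ for some $g$, and derive that $A$ cannot satisfy an identity of the form $f_g$. Equivalently, I would show directly that the algebra $UT_2^g$ itself fails every identity of the form $f_g=\sum_{i=1}^n\alpha_i^g x_{1,1}^{i-1}x_{2,g}x_{1,1}^{n-i}$ with the $\alpha_i^g$ not all zero; since $UT_2^g\in\var^G(A)$ would force $\Idg(A)\subseteq\Idg(UT_2^g)$, any such $f_g\in\Idg(A)$ would then also be an identity of $UT_2^g$, a contradiction. So the heart of the matter is a concrete evaluation in $UT_2^g$.

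\textbf{The key evaluation.} For the case $g\neq 1$, recall from Example~\ref{exemG-grad} that $UT_2^{(1)}=Fe_{11}+Fe_{22}$ and $UT_2^{(g)}=Fe_{12}$. The natural substitution is to put $x_{1,1}=e_{11}$ (a degree-$1$ element) and $x_{2,g}=e_{12}$ (a degree-$g$ element). Using $e_{11}e_{12}=e_{12}$ and $e_{12}e_{11}=0$, one computes $e_{11}^{i-1}e_{12}e_{11}^{n-i}=e_{12}$ precisely when $n-i=0$, i.e.\ $i=n$, and $0$ otherwise; so this substitution isolates the single coefficient $\alpha_n^g$. To extract an arbitrary coefficient $\alpha_i^g$ I would instead use the pair $e_{11}$ and $e_{22}$: setting $x_{1,1}=\lambda e_{11}+\mu e_{22}$ and $x_{2,g}=e_{12}$ gives $x_{1,1}^{i-1}x_{2,g}x_{1,1}^{n-i}=\lambda^{i-1}\mu^{n-i}e_{12}$, so that $f_g$ evaluates to $\bigl(\sum_{i=1}^n\alpha_i^g\lambda^{i-1}\mu^{n-i}\bigr)e_{12}$. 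Since $F$ has characteristic zero (hence is infinite), a nonzero polynomial in $\lambda,\mu$ cannot vanish identically, so if the $\alpha_i^g$ are not all zero then some choice of scalars makes the value nonzero. This contradicts $f_g\equiv 0$ on $UT_2^g$.

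\textbf{The case $g=1$.} Here the identity is $f_1=\sum_{i=1}^n\alpha_i^1 x_{1,1}^{i-1}x_{2,1}x_{1,1}^{n-i}$ with both variables of degree $1$, and the target algebra is $UT_2^1=UT_2$ with trivial grading. The same substitution works verbatim: take $x_{1,1}=\lambda e_{11}+\mu e_{22}$ and $x_{2,1}=e_{12}$, both lying in $UT_2^{(1)}=UT_2$, obtaining the same Vandermonde-type expression $\bigl(\sum_i\alpha_i^1\lambda^{i-1}\mu^{n-i}\bigr)e_{12}$, which is nonzero for suitable scalars. Thus no nontrivial $f_1$ is an identity of $UT_2^1$ either.

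\textbf{The main obstacle}—really a bookkeeping point rather than a genuine difficulty—is matching the grading constraints: the substitution must respect degrees, i.e.\ the variable $x_{2,g}$ must be sent to a genuinely degree-$g$ element, which is why $e_{12}$ (the basis of $UT_2^{(g)}$) is forced and why the argument splits into the cases $g=1$ and $g\neq 1$. Once the right homogeneous substitution is in place, the nonvanishing reduces to the elementary fact that a nonzero single-variable-pair polynomial over an infinite field is not identically zero. I would conclude by noting that $\Idg(A)\subseteq\Idg(UT_2^g)$ together with $f_g\in\Idg(A)$ would force $f_g\in\Idg(UT_2^g)$, contradicting the computation, whence $UT_2^g\notin\var^G(A)$ for every $g\in G$.
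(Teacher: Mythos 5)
Your proof is correct and takes essentially the same route as the paper: for $g\neq 1$ the paper makes the very same evaluation $x_{1,1}\mapsto \beta e_{11}+e_{22}$, $x_{2,g}\mapsto e_{12}$ and eliminates the coefficients via a Vandermonde system, which is just the univariate ($\mu=1$) form of your two-parameter nonvanishing argument. The only (harmless) difference is the case $g=1$, where the paper invokes Theorem~\ref{teoMRZ} while you run the same explicit computation directly in $UT_2^1$, making that case slightly more self-contained.
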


\begin{proof}
For $g=1$, $f_1\in \Idg(A)$. It follows, from Theorem \ref{teoMRZ}, that $UT_2^1\notin \var^G(A)$.
Now consider $g\in G\setminus \{1\}$ and $f_g\in \Idg(A)$. 

We claim that $f_g$ is not an identity of $UT_2^g$. Suppose, by contradiction, that $f_g\in \Idg(UT_2^g)$ and consider the evaluation $\varphi(\overline{x}_{1,1})=\beta e_{11}+e_{22}$ and $\varphi(\overline{x}_{2,g}) = e_{12}$, where $\beta \in F$ is non zero. Then 
$$\varphi(f_g)=\sum\limits_{i=1}^n \alpha_i^g\beta^{i-1}e_{12} = 0,$$ which implies $\sum\limits_{i=1}^n \alpha_i^g\beta^{i-1}=0. $
Since $F$ is infinite, choose distinct $\beta_0,$ $\beta_1,$ $\ldots,$ $\beta_{n-1}\in F$ to obtain the linear system 

	$$\bigtriangleup \left(\begin{array}{c} \alpha_1^g \\ \vdots \\ \alpha_{n}^g 
\end{array}\right)=0,$$
where $\bigtriangleup=(\beta_i^j)$ is a Vandermonde matrix.
	Thus, $\alpha_0^g=\cdots = \alpha_n^g=0$ and $f_g=0,$ contradicting the assumption. 
	Therefore, $UT_2^g$ does not satisfy $f_g\equiv 0$ and, therefore, $UT^{g}_{2} \notin \var^{G}(A),$ for all $g \in G.$
\end{proof}

In $2013,$ Cirrito and Giambruno \cite{GC} characterized the $G$-graded algebras whose multiplicities appearing in the decomposition of the $\langle n \rangle$-cocharacter are bounded by a constant via exclusion of $G$-graded algebras from the variety.

\begin{teorema}\cite{GC}
\label{multexc}  There exists a constant $q$ such that $m_{\langle \lambda \rangle}\le q$ for all $n$ and $\langle \lambda \rangle \vdash \gen{n}$ if and only if $UT_2^g \notin \var^G(A)$ for all $g\in G.$
\end{teorema}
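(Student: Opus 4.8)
The plan is to prove the two implications of the equivalence separately. The direction ``bounded multiplicities $\Rightarrow$ exclusion'' is immediate from the two results just proved, while the converse is the substantial direction and I would reduce it, one homogeneous degree at a time, to the ungraded theorem of Mishchenko--Regev--Zaicev (Theorem \ref{teoMRZ}).

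For the direction in which a uniform bound $m_{\langle\lambda\rangle}\le q$ forces $UT_2^g\notin\var^G(A)$ for every $g$, I would simply chain the two preceding results: the bound yields, for each $g\in G$, a nonzero sandwich polynomial $f_g\in\Idg(A)$ by Theorem \ref{teoGmult}, and then Lemma \ref{[GRADUATE R. F. Vieira 1]} gives $UT_2^g\notin\var^G(A)$. One can also see this contrapositively: evaluating $x_{1,1}\mapsto\mathrm{diag}(\alpha,\delta)$ and $x_{2,g}\mapsto e_{12}$ shows that the highest weight vectors $x_{1,1}^{i-1}x_{2,g}x_{1,1}^{n-i}$, $i=1,\ldots,n$, are linearly independent on $UT_2^g$, so $m_{((n-1)_1,(1)_g)}(UT_2^g)=n$ is unbounded, and by Remark \ref{obscaracter} the membership $UT_2^g\in\var^G(A)$ would propagate this unboundedness to $A$.

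For the converse, assume $UT_2^g\notin\var^G(A)$ for all $g$. I would first recover a sandwich identity $f_g\in\Idg(A)$ for each $g$, that is, prove the converse of Lemma \ref{[GRADUATE R. F. Vieira 1]}. For $g=1$ this reduces cleanly to the ungraded case: a multihomogeneity argument shows $UT_2^1\in\var^G(A)$ if and only if $UT_2\in\var(A^{(1)})$, since the purely degree-$1$ $G$-identities of $A$ are exactly the ordinary identities of $A^{(1)}$ while any monomial containing a variable of nontrivial degree vanishes on the trivially graded $UT_2^1$; hence $UT_2^1\notin\var^G(A)$ gives, by the implication $(2)\Rightarrow(3)$ of Theorem \ref{teoMRZ}, a nonzero $f_1\in\Idg(A)$. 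For $g\neq1$ I would run the same argument with $x_{1,1}$ playing the role of the ungraded variable $y$ and $x_{2,g}$ that of $x$: the evaluation above identifies the relevant specializations in $UT_2^g$ with those in $UT_2$, so $UT_2^g$ is precisely the $G$-graded obstruction to an identity $f_g$, and the Mishchenko--Regev--Zaicev construction of a sandwich relation from the exclusion of $UT_2$ can be transferred to produce $f_g\in\Idg(A)$.

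It then remains to bound all multiplicities using the finitely many identities $f_g$, $g\in G$, say with $\deg f_g=n_g$. By the highest weight vector criterion (Theorem \ref{multiplicity}), $m_{\langle\lambda\rangle}$ equals the maximal number of linearly independent highest weight vectors $f_{T_{\langle\lambda\rangle}}$ in $F_m^n(A)$, and each $f_g$ supplies a linear recurrence rewriting a single degree-$g$ letter lying among $n_g$ or more consecutive degree-$1$ letters in terms of configurations with fewer such letters; iterating these recurrences collapses every highest weight vector onto reduced ones whose number is controlled by the $n_g$ and by $|G|$ alone, giving the uniform bound $q$. I expect the genuine difficulty to lie exactly here, for the \emph{mixed} multipartitions whose variables carry several distinct degrees simultaneously: a single $f_g$ governs only one nontrivial-degree letter embedded among degree-$1$ letters, so one must multilinearize and interleave the recurrences coming from the various degrees and verify that the rewriting terminates with a degree-independent number of reduced vectors. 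This multigraded confinement is the analogue of the hard implication $(3)\Rightarrow(1)$ of Theorem \ref{teoMRZ} and is the technical core carried out in \cite{GC}.
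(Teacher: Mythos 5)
First, a point of comparison: the paper does not prove this statement at all --- it is imported from \cite{GC} and used as the third leg of the equivalence assembled in Theorem \ref{[ GRADUADA R. F. Vieira]2} --- so there is no internal proof to measure yours against. Your forward direction (bounded multiplicities imply exclusion) is correct and is exactly the composite the paper itself sets up: boundedness yields the sandwich identities $f_g$ by Theorem \ref{teoGmult}, and Lemma \ref{[GRADUATE R. F. Vieira 1]} then excludes each $UT_2^g$. Your contrapositive variant is also sound: the Vandermonde evaluation $x_{1,1}\mapsto \alpha e_{11}+\delta e_{22}$, $x_{2,g}\mapsto e_{12}$ does give $m_{((n-1)_1,(1)_g)}(UT_2^g)=n$, and Remark \ref{obscaracter} propagates the unboundedness to any $A$ whose variety contains $UT_2^g$.

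The converse is where the entire content of the cited theorem lives, and there your argument has two genuine gaps. First, your recovery of $f_g$ from $UT_2^g\notin\var^G(A)$ is complete only for $g=1$, where the reduction to $UT_2\notin\var(A^{(1)})$ and to the implication $(2)\Rightarrow(3)$ of Theorem \ref{teoMRZ} works as you describe. For $g\neq 1$ you assert that the Mishchenko--Regev--Zaicev construction ``transfers'', but what must actually be shown is: if for every $n$ the polynomials $x_{1,1}^{i-1}x_{2,g}x_{1,1}^{n-i}$, $i=1,\ldots,n$, are linearly independent modulo $\Idg(A)$, then $\Idg(A)\subseteq\Idg(UT_2^g)$. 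That is the nontrivial half of the ungraded argument, and it does not formally reduce to it, since the reduced monomials modulo $\Idg(UT_2^g)$ and the admissible graded substitutions differ from the ordinary setting; it has to be re-proved. Second, even granting all the $f_g$, your rewriting step only governs a single letter of nontrivial degree embedded among degree-$1$ letters; it gives no direct control over multipartitions concentrated in the nontrivial components (for instance $\langle n\rangle=(0,\ldots,n_g,\ldots,0)$ with $n_g$ large), where the needed relations must be extracted by substituting degree-$g$ products for the degree-$1$ variables and interleaving consequences across several group elements. You correctly identify this confinement step as the technical core, but you defer it entirely to \cite{GC}; as written, the hard implication is a program rather than a proof.
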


Therefore, we have the main result of this section, which provides the relation between $G$-identities, multiplicities bounded by constants and the exclusion of $G$-graded algebras from the variety, thus obtaining a version of Theorem \ref{teoMRZ} in the context of $G$-graded algebras.

\begin{teorema}\label{[ GRADUADA R. F. Vieira]2}
 The following conditions are equivalent:
	\begin{itemize}
		\item[1)] There exists a constant $q$ such that $m_{\langle \lambda \rangle} \le q$ for all $n$ and all $\langle \lambda \rangle \vdash \langle n \rangle;$

\item[2)]  $A$ satisfies identities of the form	$f_g= \sum\limits_{i=1}^n \alpha_i^g x_{1,1}^{i-1} x_{2,g}x_{1,1}^{n-i}$, for all $g\in G;$ 
        
		\item[3)] $UT_2^{g}\notin \var^G(A),$ for all $g\in G.$

	\end{itemize}
	\end{teorema}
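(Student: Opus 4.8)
The plan is to establish the three-way equivalence by closing a cycle of implications $(1)\Rightarrow(2)\Rightarrow(3)\Rightarrow(1)$, since every single arrow has already been proved (or cited) in this section. The entire argument therefore reduces to matching the hypotheses and conclusions of the preceding results against the three conditions in the statement, with essentially no new computation required; all the representation-theoretic input (highest weight vectors, Theorem \ref{multiplicity-}) and the Vandermonde argument have been absorbed into those earlier results.

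For $(1)\Rightarrow(2)$, I would invoke Theorem \ref{teoGmult} directly: its hypothesis is exactly that $m_{\langle\lambda\rangle}\le q$ for all $n$ and all $\langle\lambda\rangle$, and its conclusion is precisely that $A$ satisfies an identity $f_g=\sum_{i=1}^n\alpha_i^g x_{1,1}^{i-1}x_{2,g}x_{1,1}^{n-i}$ with coefficients not all zero, for each $g\in G$. For $(2)\Rightarrow(3)$, I would apply Lemma \ref{[GRADUATE R. F. Vieira 1]}, whose hypothesis and conclusion are again verbatim the conditions in $(2)$ and $(3)$: satisfying each $f_g$ forces $UT_2^{g}\notin\var^G(A)$ for all $g\in G$.

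To close the cycle with $(3)\Rightarrow(1)$, I would use the ``only if'' direction of Theorem \ref{multexc} of Cirrito and Giambruno \cite{GC}, which asserts that $UT_2^{g}\notin\var^G(A)$ for all $g\in G$ implies the existence of a constant $q$ bounding all the multiplicities $m_{\langle\lambda\rangle}$. Chaining the three arrows then yields the equivalence of $(1)$, $(2)$ and $(3)$.

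The only point requiring care is uniform bookkeeping over the trivial degree $g=1$ versus the nontrivial degrees $g\neq 1$. Both Theorem \ref{teoGmult} and Lemma \ref{[GRADUATE R. F. Vieira 1]} handle $g=1$ separately, reducing it to the classical case via Theorem \ref{teoMRZ} applied to $UT_2^1$, and treat $g\neq 1$ through the multitableau construction. I would make sure the families $\{f_g\}_{g\in G}$ and $\{UT_2^{g}\}_{g\in G}$ are indexed over all of $G$, including $g=1$, so that each cited implication covers the full range quantified in the statement rather than only the nontrivial part. Beyond this indexing check, I anticipate no genuine obstacle: the substance of the theorem lives entirely in the three preceding results, and the present proof is their formal assembly into a single equivalence.
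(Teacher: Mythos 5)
Your proposal matches the paper's proof exactly: the paper also obtains the equivalence by combining Theorem \ref{teoGmult} for $(1)\Rightarrow(2)$, Lemma \ref{[GRADUATE R. F. Vieira 1]} for $(2)\Rightarrow(3)$, and Theorem \ref{multexc} for $(3)\Rightarrow(1)$. The argument is correct and no further comment is needed.
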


	\begin{proof} The proof follows from Lemma \ref{[GRADUATE R. F. Vieira 1]} and Theorems \ref{teoGmult} and \ref{multexc}.	
	\end{proof}

To illustrate the main result of this section, let us look at the following example.  

 \begin{exemplo}\label{ex3G} Let $G$ be a group of even order and take $g\in G$ with $|g|=2.$ Consider the algebra
$$K=\left\{\left(\begin{array}{ccc}
0 & a & b \\ 
0 & c & d \\
0 & 0 & 0
\end{array}\right) ~|~ a, b,c,d \in F \right\}.$$ 

Denote by $K^{g},$ the algebra $K$ with the $G$-grading  
\begin{center} $K^{(1)}=Fe_{22} + Fe_{13},$ $ K^{(g)}=Fe_{12}+Fe_{23}$ and $K^{(r)}=\{0\}$, for all $r \in G\setminus\{1,g\}.$
\end{center}

 The authors in \cite{CIMV}  proved that the ideal $\textnormal{Id}^G(K^{g})$ 
is generated, as a $T_G$-ideal, by $$ [x_{1,1},x_{2,1}], x_{1,g}x_{2,g}x_{3,g}, x_{1,1}x_{2,g}x_{3,g},x_{1,g}x_{2,g}x_{3,1}, x_{1,1}x_{2,g}x_{3,1}, x_{1,r}, \ \mbox{where} \ r\in G\setminus \{1,g\} $$

We notice that the condition 2) of the previous theorem is valid. In fact, in case $g=1,$ we have that $A$ satisfies $[x_{1,1}^{n-1}, x_{2,1}]$ and for $g \neq 1,$ the remaining identities in 2)  follow immediately, since $A$ satisfies $x_{1,1}x_{2,g}x_{3,1}.$  
 The authors in \cite{CIMV} also showed that the only  $\langle n \rangle$-cocharacters with non-zero multiplicities are 
$$\chi_{((n)_1)},~ ~ \chi_{((n-2)_1,(2)_g )},~ ~ \chi_{((n-2)_1,(1^2)_g )}~ \mbox{and} ~ \chi_{((n-1)_1,(1)_g)}$$
and the multiplicities of the above cocharacters are bounded by $2.$

\end{exemplo}

\section{Characterization of $(G,*)$-algebras with multiplicities bounded by $1$ }

 The goal of this section is to present a characterization of $G$-graded algebras endowed with a graded involution $\ast$ having multiplicities in the decomposition of the $\langle n \rangle$-cocharacter bounded by $1$ via $(G,*)$-identities satisfied by the algebra. Such a characterization was established by Martino in \cite{F} when $G$ is a cyclic group of order 2.

From now on, we will consider $G=\{g_1=1, g_2, \ldots, g_k\}$ a finite abelian group of order $k$ and  $A$ will be a $(G,*)$-algebra whose $\langle n \rangle$-cocharacter is given as in (\ref{eq1coca-})  We will use the notation $x_i\in \{y_i,z_i\}$ to indicate that we are  considering both cases:  $x_i=y_i$ and $x_i=z_i.$ We start presenting in the next result some identities satisfied by a $(G,*)$-algebra having multiplicities bounded by 1. We use the notation $a\circ b$ for $ab+ba$.

\begin{teorema}\label{teoida}
	If for all $n\geq 1$, all composition $\langle n \rangle$ and all multipartition $\langle \lambda \rangle \vdash \langle n \rangle$, we have $m_{\langle \lambda \rangle}\le 1,$ then for all $g,$ $ h \in G$ with $g \neq h$, 
    we have that
	$A$ satisfies
	at least one identity in each one of the lists of identities below 
$$\begin{array}{lcl} x_{1,g}x_{2,h}+\alpha_{g,h}x_{2,h}x_{1,g}&\equiv& 0, \;\; \mbox{with} \;\;\; \alpha_{g,h} \in \{0, 1, -1\}  \\
    ~ y_{1,g}z_{2,g}+\beta_g z_{2,g} y_{1,g}& \equiv&0, \;\;\mbox{with}\;\;\;\beta_g \in \{0, 1, -1\}\end{array}$$
    where $x_i \in \{y_i, z_i\}$. 
\end{teorema}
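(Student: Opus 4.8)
The plan is to obtain each required binomial relation from a one-dimensional codimension count and then to use the graded involution to pin down the coefficient. The key observation is that for a composition $\langle n \rangle$ of $n=2$ all of whose parts are $0$ or $1$, there is a unique multipartition $\langle \lambda \rangle \vdash \langle n \rangle$ (all of its partitions being single boxes), so $d_{\langle \lambda \rangle}=1$ and, by (\ref{cmultin-}), $\dim_F P_{\langle n \rangle}(A)=c_{\langle n \rangle}(A)=m_{\langle \lambda \rangle}$. Thus the hypothesis $m_{\langle \lambda \rangle}\le 1$ becomes a genuine bound on the dimension of the quotient space, while $P_{\langle n \rangle}$ itself is two-dimensional, being spanned by the two multilinear monomials.

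For the first list, fix $g\neq h$ in $G$ and symmetry types for the two variables, i.e. $x_1\in\{y_1,z_1\}$ and $x_2\in\{y_2,z_2\}$. First I would take the composition of $n=2$ with a single box in the slot (degree $g$, type of $x_1$) and a single box in the slot (degree $h$, type of $x_2$), all other parts being zero. Then $P_{\langle n \rangle}=\mathrm{span}_F\{x_{1,g}x_{2,h},\,x_{2,h}x_{1,g}\}$ is two-dimensional, whereas $\dim_F P_{\langle n \rangle}(A)=m_{\langle \lambda \rangle}\le 1$, so $P_{\langle n \rangle}\cap\IdG(A)\neq 0$ and $A$ satisfies a nontrivial relation $a\,x_{1,g}x_{2,h}+b\,x_{2,h}x_{1,g}\equiv 0$ with $(a,b)\neq(0,0)$. (Equivalently, one may invoke Theorem \ref{multiplicity-}: the two highest weight vectors attached to the two multitableaux of this type are linearly dependent modulo $\IdG(A)$.)

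Next I would normalize this relation. If $a=0$, then $x_{2,h}x_{1,g}\equiv 0$, and applying $*$ together with $(x_{2,h}x_{1,g})^*=\pm\,x_{1,g}x_{2,h}$ yields $x_{1,g}x_{2,h}\equiv 0$ as well, so $\alpha_{g,h}=0$ works. If $a\neq 0$, dividing gives $x_{1,g}x_{2,h}+\alpha\,x_{2,h}x_{1,g}\equiv 0$ with $\alpha=b/a$. In each of the four symmetry combinations one checks that $(x_{1,g}x_{2,h})^*=\varepsilon\,x_{2,h}x_{1,g}$ and $(x_{2,h}x_{1,g})^*=\varepsilon\,x_{1,g}x_{2,h}$ for a common sign $\varepsilon=\pm 1$ (namely $\varepsilon=(-1)^{\#\text{skew}}$), so applying $*$ produces the companion identity $\alpha\,x_{1,g}x_{2,h}+x_{2,h}x_{1,g}\equiv 0$. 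Substituting one relation into the other gives $(1-\alpha^2)\,x_{2,h}x_{1,g}\equiv 0$; hence either $\alpha^2=1$, so $\alpha_{g,h}\in\{1,-1\}$, or $x_{2,h}x_{1,g}\equiv 0$ (and then also $x_{1,g}x_{2,h}\equiv 0$), so $\alpha_{g,h}=0$. This yields an identity of the first list for every choice of symmetry types. The second list is handled identically, now with the composition of $n=2$ having single boxes in the slots (degree $g$, symmetric) and (degree $g$, skew): the codimension count gives $a\,y_{1,g}z_{2,g}+b\,z_{2,g}y_{1,g}\equiv 0$, and since $(y_{1,g}z_{2,g})^*=-z_{2,g}y_{1,g}$ and $(z_{2,g}y_{1,g})^*=-y_{1,g}z_{2,g}$, the same manipulation forces $\beta_g\in\{0,1,-1\}$.

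The step I expect to require the most care is the sign bookkeeping under $*$: one must verify that in every admissible symmetry assignment the involution carries each of the two monomials to a scalar multiple of the other with a single uniform sign $\varepsilon$, so that the original and companion relations combine cleanly into $(1-\alpha^2)x_{2,h}x_{1,g}\equiv 0$. The degenerate branches, where a leading coefficient vanishes, must likewise be routed through $*$ in order to recover the $\alpha_{g,h}=0$ (respectively $\beta_g=0$) identity rather than only a one-sided vanishing.
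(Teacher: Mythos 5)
Your proposal is correct and follows essentially the same route as the paper: use the multiplicity bound at $n=2$ (via Theorem \ref{multiplicity-}, or equivalently the codimension count) to force a linear dependence between the two degree-two monomials, then apply the graded involution to the resulting relation to pin the coefficient down to $\{0,1,-1\}$. The only cosmetic differences are that you treat the degenerate case of a vanishing leading coefficient explicitly and combine the relation with its $*$-image by substitution (yielding $(1-\alpha^2)$) rather than by addition (yielding $(1-\alpha)$), both of which lead to the same conclusion.
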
 
\begin{proof}
    Suppose that $m_{\langle \lambda \rangle}\le 1$, for all $\langle n \rangle$ and for all $\langle \lambda \rangle \vdash \langle n \rangle$. In particular, consider $n=2$ and $\langle \lambda \rangle = ((1)_{1^+},(1)_{1^-}).$ For this multipartition, we have two highest weight vectors  
		$$f_1=y_{1,1}z_{2,1}
		~\mbox{and} ~
		f_2=z_{2,1}y_{1,1}.$$
	
	Since $m_{((1)_{1^+},(1)_{1^-})}\ \le 1$, it follows from Theorem \ref{multiplicity-} that the highest weight vectors $f_1$ and $f_2$ are linearly dependent modulo $\IdG(A).$
	Therefore, there exists $\alpha \in F$ such that
\begin{equation}\label{eq 1}
		y_{1,1}z_{2,1}+\alpha z_{2,1}y_{1,1}\equiv 0.
	\end{equation}

     Notice that if $\alpha=0$ then we have $y_{1,1}z_{2,1}\equiv 0.$ Otherwise, since $A$ is a $(G,*)$-algebra, the application of the involution in the identity above results in \begin{equation}\label{eq 2}
		-z_{2,1}y_{1,1}-\alpha y_{1,1}z_{2,1}\equiv 0.
	\end{equation}

Adding the identities in (\ref{eq 1}) and (\ref{eq 2}) we get  
      \begin{equation} 	 \label{eq 20}
		(1-\alpha)y_{1,1}z_{2,1}-(1-\alpha)z_{2,1}y_{1,1}\equiv 0.
	\end{equation}
    	
        Now, if $1-\alpha = 0,$ then $\alpha=1$ and by equation (\ref{eq 1}), we have
	$y_{1,1}\circ z_{2,1}\equiv 0.$ On the other hand, if $1-\alpha \neq 0,$ by equation (\ref{eq 20}) we have $[y_{1,1},z_{2,1}]\equiv 0.$
	In conclusion, we get that $A$ satisfies an identity of type $$y_{1,1}z_{2,1}+\alpha z_{2,1}y_{1,1}\equiv 0, \;\; \mbox{with}\;\; \alpha \in \{0, 1, -1\}.$$ 
    
    Taking into account the notation established in (\ref{eq 3}), we use analogous reasoning for all multipartitions of the same type, i.e. for the multipartitions
    $ ((1)_{g^{+}},(1)_{g^{-}})~ \mbox{and} ~ ((1)_{g^{\epsilon}},(1)_{h^{\gamma}}),$ 
 with $g$ and $h$ distinct elements in $G$ and  $\epsilon, \gamma\in \{+,-\}.$ Thus 
we conclude that $A$ satisfies at least one identity as listed in the statement of the theorem, and so the proof follows.    
\end{proof}

Notice that, if $A$ satisfies at least one identity in each list of identities in the previous theorem, then, modulo $\IdG(A)$, the variables of any $(G,*)$-polynomial can be reordered. So, if $n\leq 2,$ modulo $\IdG(A),$ for any multipartition $\langle \lambda \rangle$, we will have at most one highest weight vector which is not an identity of $A$. Thus, in this case, the multiplicities in any $\langle n \rangle$-cocharacter are bounded by $1.$ Therefore, from now on we will only be concerned with the situation $n\geq 3$.

Next, let us present a series of results that will allow us to conclude that the converse of Theorem \ref{teoida} is true. 
We start by proving that a highest weight vector associated with the multipartition $\langle \lambda \rangle = (\lambda_1,  \ldots, \lambda_{2k}), $ can be written as a product of highest weight vectors  associated with each multipartition $\lambda_{i}\vdash n_i,$ $1\leq i\leq 2k.$ 

\begin{lema}
	\label{obsum} 
If	for all $g,$ $ h \in G$ with $g \neq h$, 
    we have that
	$A$ satisfies
	at least one identity in each one of the lists of identities below 
$$\begin{array}{lcl} x_{1,g}x_{2,h}+\alpha_{g,h}x_{2,h}x_{1,g}&\equiv& 0, \;\; \mbox{with} \;\;\; \alpha_{g,h} \in \{0, 1, -1\}  \\
    ~ y_{1,g}z_{2,g}+\beta_g z_{2,g} y_{1,g}& \equiv&0, \;\;\mbox{with}\;\;\;\beta_g \in \{0, 1, -1\}\end{array}$$
    where $x_i \in \{y_i, z_i\}$
	then for a multipartition $\langle \lambda \rangle = (\lambda_{1},  \ldots,\lambda_{2k})\vdash \langle n \rangle,$ the highest weight vector  associated with a multitableau of shape $\langle \lambda \rangle$ is equivalent, up to sign, to the product of highest weight vectors associated with multipartitions of type $(\emptyset, \ldots, \emptyset, \lambda_i,\emptyset,\ldots, \emptyset)$ for $1 \le i \le 2k. $ Also, $$m_{(\lambda_1,\lambda_2,\ldots, \lambda_{2k})} \le m_{(\lambda_1,\emptyset,\ldots,\emptyset)} m_{(\emptyset,\lambda_2,\ldots,\emptyset)} \cdots m_{(\emptyset,\cdots, \emptyset,\lambda_{2k})}.$$

\end{lema}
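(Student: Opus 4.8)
The plan is to deduce from the hypotheses a set of reordering rules for variables of different ``types'' (where a type records a degree in $G$ together with a symmetry sign $+$ or $-$), and then to observe that each highest weight vector $f_{T_{\gen{\lambda}}}=f_{\gen{\lambda}}\sigma^{-1}$ is obtained from $f_{\gen{\lambda}}$ by one permutation $\sigma$ applied uniformly to all of its monomials, which lets me sort every monomial by type with a single global sign. First I would record the reordering rules. For two distinct degrees the relation $x_{1,g}x_{2,h}+\alpha_{g,h}x_{2,h}x_{1,g}\equiv 0$ applies (for any choice $x_i\in\{y_i,z_i\}$), and for a fixed degree the relation $y_{1,g}z_{2,g}+\beta_g z_{2,g}y_{1,g}\equiv 0$ handles the symmetric--skew pair. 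Since $\IdG(A)$ is invariant under $*$, and $y$-variables are symmetric while $z$-variables are skew, applying $*$ to any relation whose scalar is $0$ yields the reversed annihilation. Thus for every ordered pair of distinct types $s\ne t$ either (i) $u_s v_t\equiv \varepsilon_{s,t}\,v_t u_s$ for a fixed sign $\varepsilon_{s,t}\in\{1,-1\}$ depending only on $(s,t)$, or (ii) $u_s v_t\equiv 0$ and $v_t u_s\equiv 0$, in both cases independently of the row indices of the variables. This is the only place the hypotheses enter.

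Next I would analyze the structure of $f_{T_{\gen{\lambda}}}$. By its very definition $f_{\gen{\lambda}}=B_1\cdots B_{2k}$, where $B_i$ is the highest weight vector of the single-type multipartition $(\emptyset,\ldots,\lambda_i,\ldots,\emptyset)$ and is homogeneous of degree $n_i$ in type-$i$ variables only; hence in every monomial of $f_{\gen{\lambda}}$ the type-$i$ slots form the fixed block of positions $n_1+\cdots+n_{i-1}+1,\ldots,n_1+\cdots+n_i$. Because $\sigma$ rearranges positions in the same way in each monomial, after applying $\sigma^{-1}$ the positions occupied by type-$i$ variables form a fixed set $P_i$, identical across all monomials, with $\{1,\ldots,n\}=\bigsqcup_i P_i$. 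I would then bubble-sort each monomial so the blocks appear in the order $P_1,\ldots,P_{2k}$, never transposing two variables of the same type, so the internal order of each type is preserved. Since the pattern of types is the same in every monomial, the sequence of transpositions is identical for all of them, and by rule (i) each transposition contributes a sign depending only on the two types involved; consequently all monomials acquire one common sign and $f_{T_{\gen{\lambda}}}\equiv \pm\, f_{T_{\lambda_1}}\cdots f_{T_{\lambda_{2k}}}\pmod{\IdG(A)}$, where $f_{T_{\lambda_i}}$ is the type-$i$ block in its preserved internal order, i.e.\ the highest weight vector of $(\emptyset,\ldots,\lambda_i,\ldots,\emptyset)$ determined by the $i$-th component tableau of $T_{\gen{\lambda}}$. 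If instead an annihilating pair (ii) of present types is met during the sort, every monomial vanishes, so $f_{T_{\gen{\lambda}}}\equiv 0$; the same rules force the factored product to vanish as well, and the equivalence holds with both sides zero.

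For the multiplicity inequality I would invoke Theorem \ref{multiplicity-}, by which each multiplicity equals the maximal number of linearly independent highest weight vectors of the corresponding type in $F_m^n(A)$. For each $i$ fix $m_i:=m_{(\emptyset,\ldots,\lambda_i,\ldots,\emptyset)}$ single-type highest weight vectors whose classes span, modulo $\IdG(A)$, all such vectors; then every $f_{T_{\lambda_i}}$ is congruent to a linear combination of them. Since $\IdG(A)$ is a two-sided ideal, congruences multiply (writing $ABC-A'B'C'=(A-A')BC+A'(B-B')C+A'B'(C-C')$), so by the factorization above each $f_{T_{\gen{\lambda}}}$ is congruent to a linear combination of the $\prod_i m_i$ products of chosen basis vectors. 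Hence the span of the full highest weight vectors has dimension at most $\prod_{i=1}^{2k} m_i$, and Theorem \ref{multiplicity-} gives $m_{(\lambda_1,\ldots,\lambda_{2k})}\le \prod_{i=1}^{2k} m_{(\emptyset,\ldots,\lambda_i,\ldots,\emptyset)}$.

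The step I expect to be the main obstacle is the second one: making precise that $\sigma$ permutes positions uniformly, that the sort can be carried out without ever transposing equal-type variables (so that the internal order recovered in each block is exactly that of the $i$-th component tableau), and that the accumulated sign is genuinely monomial-independent. The subsidiary degenerate case also requires care, namely verifying that when a present pair of types annihilates, the same reordering rules force both $f_{T_{\gen{\lambda}}}$ and the factored product $f_{T_{\lambda_1}}\cdots f_{T_{\lambda_{2k}}}$ to reduce to zero simultaneously.
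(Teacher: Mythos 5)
Your proof is correct and follows essentially the same route as the paper's: use the commutation/anticommutation (or annihilation) relations to sort each monomial of $f_{T_{\langle \lambda \rangle}}$ into type blocks, recover the single-type highest weight vectors up to a uniform sign, and deduce the multiplicity bound from Theorems \ref{multip-} and \ref{multiplicity-}. You supply details the paper leaves implicit — notably that the accumulated sign is the same across all monomials because $\sigma$ permutes positions uniformly, the degenerate annihilation case, and the spanning-set argument giving the product bound on multiplicities.
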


\begin{proof}
Considering a multipartition $\langle \lambda \rangle$, we have that the highest weight vector associated with a multitableau $T_{\langle \lambda \rangle}$ is given by 
	$$\begin{array}{cclc}	
	f_{T_{\langle \lambda \rangle}} &=&\left(\prod\limits_{j=1}^{(\lambda_1)_1}St_{{h_j}(\lambda_1)}(y_{1,1}, \ldots, y_{{h_j}(\lambda_1),1})  \cdots \prod\limits_{j=1}^{(\lambda_{2k})_1}St_{{h_j}(\lambda_{2k})}(z_{1,g_k}, \ldots, z_{{h_j}(\lambda_{2k}),g_k})\right)\sigma^{-1},
\end{array}$$
where $\sigma$ is the only permutation that transforms the standard multitableau into the multitableau $T_{\langle \lambda \rangle}.$

	Suppose that $A$ satisfies at least one identity in the list of identities in the hypothesis. Therefore, modulo $\IdG(A)$, all the respective variables of $f_{T_{\langle \lambda \rangle}}$ commute or anticommute. This means that each monomial of $f_{T_{\langle \lambda \rangle}}$ can be rewritten as a linear combination of monomials of the form
    $$ y_{i_1,1}\cdots y_{i_{h_j(\lambda_1)},1} z_{i_1,1}\cdots z_{i_{h_j(\lambda_2)},1}\cdots z_{i_1,g_k}\cdots z_{i_{h_j(\lambda_{2k})},g_k}.$$
    
    Since the standard polynomial is alternating and multilinear, we can recover the polynomials 
$$f_{\lambda_1}:=\prod\limits_{j=1}^{(\lambda_1)_1}St_{h_j(\lambda_1)}(y_{1,1},\ldots,y_{h_j(\lambda_1),1}),\, \ldots  \,,\, f_{\lambda_{2k}}:=\prod\limits_{j=1}^{(\lambda_{2k})_1}St_{h_j(\lambda_{2k})}(z_{1,g_k},\ldots,z_{h_j(\lambda_{2k}),g_k})$$  
     unless a reordering between the positions of their respective variables. This means that, for each $1\leq i\leq 2k$, there exists a permutation $\sigma_i\in S_{n_i}$ such that
      $$f_{T_{\langle \lambda \rangle}}\equiv \pm f_{\lambda_1}\sigma_1^{-1} f_{\lambda_2}\sigma_2^{-1}\cdots  f_{\lambda_{2k}}\sigma_{2k}^{-1} \,\mbox{ mod }\IdG(A).$$

Therefore we conclude that 
$$		f_{T_{\langle \lambda \rangle}} \equiv  \pm f_{T_{\lambda_1}}	f_{T_{\lambda_2}}\cdots f_{T_{\lambda_{2k}}},
$$ where $f_{T_{\lambda_i}}:=f_{\lambda_i}\sigma_1^{-1}$
is the highest weight vector  associated to the multipartition
$\lambda_i\vdash n_i,$ $1\leq i\leq 2k$. Thus, $f_{{T}_{\langle \lambda \rangle}}$ is equivalent to the product of highest weight vectors  associated with the multipartition $\lambda_{i}$ of $n_i.$ The second statement follows immediately from the first part of this theorem and by Theorems \ref{multip-} and \ref{multiplicity-}. 	
\end{proof}

As a consequence of the preceding result, we see that in order to prove the converse of Theorem \ref{teoida}, it is enough to show that multiplicities of type $m_{(\mu_{ g^\epsilon})}$ are bounded by $1$, for all $g\in G$ and $\epsilon\in \{+,-\}$. Let us start with a particular case.	

\begin{lema}\label{lema0F} 
 If $A$ satisfies an identity $$y_{1,1}z_{2,1}+\gamma z_{2,1}y_{1,1}\equiv 0,$$ 
	for some $\gamma \in \{0, 1, -1\},$ then $m_{\langle \lambda \rangle} \le 1,$ for all multipartitions  $\langle \lambda \rangle \vdash (n_{1^\epsilon})$ with $\epsilon\in\{+,-\}.$
\end{lema}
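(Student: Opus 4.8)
The plan is to reduce to the ordinary Ananin--Kemer theorem. By Theorem \ref{multiplicity-}, $m_{\langle \lambda \rangle}$ equals the maximal number of highest weight vectors $f_{T_{\langle \lambda \rangle}}$, associated with multitableaux of shape $\langle \lambda \rangle$, that are linearly independent modulo $\IdG(A)$. Since $\langle \lambda \rangle \vdash (n_{1^\epsilon})$ is concentrated in a single slot, each such vector is a polynomial purely in the symmetric variables $y_{i,1}$ (if $\epsilon=+$) or purely in the skew variables $z_{i,1}$ (if $\epsilon=-$). Identifying these variables with ordinary indeterminates $x_i$, the number $m_{\langle \lambda \rangle}$ becomes the multiplicity of $\chi_\lambda$ in the ordinary $S_n$-module $P_n/(P_n\cap M_\epsilon)$, where $M_\epsilon$ is the set of multilinear polynomials vanishing under every evaluation of the variables in $(A^{(1)})^+$ (resp.\ $(A^{(1)})^-$). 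Thus it suffices to exhibit an Ananin--Kemer polynomial $\alpha[x_1,x_2]x_2+\beta x_2[x_1,x_2]$, with $(\alpha,\beta)\neq(0,0)$, lying in $M_\epsilon$, and then to invoke the characterization of Ananin and Kemer recalled in the Introduction, which forces every such ordinary multiplicity to be at most $1$.

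For the symmetric slot ($\epsilon=+$) this is immediate. Since the involution reverses a product of symmetric variables, $[y_{1,1},y_{2,1}]$ is a skew element of degree $1$, so I substitute $z_{2,1}\mapsto [y_{1,1},y_{2,1}]$ and $y_{1,1}\mapsto y_{2,1}$ in the hypothesis to get
$$y_{2,1}[y_{1,1},y_{2,1}]+\gamma[y_{1,1},y_{2,1}]y_{2,1}\equiv 0,$$
an Ananin--Kemer polynomial with $(\alpha,\beta)=(\gamma,1)\neq(0,0)$. For the skew slot ($\epsilon=-$) I exploit that a square $z_{2,1}^2$ and an anticommutator $z_{1,1}\circ z_{2,1}$ of skew elements are symmetric of degree $1$. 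Substituting $y_{1,1}\mapsto z_{2,1}^2$, $z_{2,1}\mapsto z_{1,1}$ yields $z_{2,1}^2z_{1,1}+\gamma z_{1,1}z_{2,1}^2\equiv 0$, and substituting $y_{1,1}\mapsto z_{1,1}\circ z_{2,1}$, $z_{2,1}\mapsto z_{2,1}$ yields a second relation. A short case analysis on $\gamma\in\{0,1,-1\}$ then pins down the monomials $z_{2,1}^2z_{1,1}$ and $z_{2,1}z_{1,1}z_{2,1}$ so that $A$ satisfies one of $[z_{1,1},z_{2,1}]z_{2,1}\equiv 0$, $[z_{1,1},z_{2,1}]z_{2,1}-z_{2,1}[z_{1,1},z_{2,1}]\equiv 0$, or $[z_{1,1},z_{2,1}]z_{2,1}+z_{2,1}[z_{1,1},z_{2,1}]\equiv 0$ — in each case an Ananin--Kemer polynomial in the skew variables.

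It then remains to transfer the ordinary bound. The key is that $M_\epsilon$ contains the entire multilinear part of the ordinary $T$-ideal generated by the Ananin--Kemer polynomial produced above. Indeed, that multilinear part is spanned by padded instances $w_1\,\widetilde{p}(x_i,x_j,x_k)\,w_2$ of the multilinearization $\widetilde{p}$ of the polynomial, since substitutions of products into the commutator slots reduce to such instances via the Leibniz rule; and each padded instance vanishes identically on $(A^{(1)})^\epsilon$ because $\widetilde{p}$ already does there. Hence $P_n\cap M_\epsilon\supseteq P_n\cap T_p$, so $P_n/(P_n\cap M_\epsilon)$ is a homomorphic image of $P_n$ modulo that $T$-ideal; in characteristic zero the $S_n$-multiplicities of a quotient are dominated by those of the module, and the latter are at most $1$ by Ananin--Kemer. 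This gives $m_{\langle \lambda \rangle}\le 1$ for every $\langle \lambda \rangle\vdash(n_{1^\epsilon})$.

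I expect the main obstacle to be the skew slot: unlike the symmetric case, no single substitution converts the hypothesis into an Ananin--Kemer polynomial, and one must first manufacture the symmetric degree-$1$ ingredients $z^2$ and $z\circ z'$ out of skew variables and then carry out the $\gamma$-dependent bookkeeping. A secondary delicate point is the transfer step itself: one must verify that only \emph{padded variable-instances} of $\widetilde{p}$ are needed, and not genuine product-substitutions, which a priori need not vanish on the non-multiplicative subspace $(A^{(1)})^\epsilon$; this is exactly what legitimizes descending the ordinary multiplicity bound to the $(G,\ast)$-setting.
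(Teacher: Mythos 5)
Your derivation of Ananin--Kemer-type relations valid on the subspaces $(A^{(1)})^{\pm}$ (substituting $[y_1,y_2]$, $z^2$ and $z_1\circ z_2$ into the hypothesis) is sound, and is in the spirit of what the paper does later for the components of degree $g\neq 1$. The paper itself, however, disposes of this lemma in one line: $A^{(1)}$ is an honest algebra with involution satisfying $y_1z_2+\gamma z_2y_1\equiv 0$, so Theorem 3 of Giambruno--Mishchenko \cite{GM} applies directly and even yields the bound for all mixed multipartitions $((\lambda_1)_{1^+},(\lambda_2)_{1^-})$, of which your pure slots are special cases.

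The genuine gap in your argument is the transfer step. Since $(A^{(1)})^{\epsilon}$ is not a subalgebra, you rightly avoid applying Ananin--Kemer to it directly and instead try to show $P_n\cap T_p\subseteq P_n\cap M_\epsilon$, resting this on the claim that the multilinear part of $T_p$ is spanned by padded \emph{single-variable} instances $w_1\widetilde{p}(x_i,x_j,x_k)w_2$. That claim is false. Take $p=[x_1,x_2]x_2$ (one of the three outcomes you list for the skew slot), so $\widetilde{p}(x_1,x_2,x_3)=[x_1,x_2]x_3+[x_1,x_3]x_2$, and let $W=E_1$ be the odd part of the Grassmann algebra $E$. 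For $u,v,w\in E_1$ one has $[u,v]=2uv$ central and $[u,v]w+[u,w]v=2uvw+2uwv=0$, so every padded single-variable instance vanishes under substitutions from $W$; yet the multilinear consequence $\widetilde{p}(x_1,x_2x_3,x_4)=[x_1,x_2x_3]x_4+[x_1,x_4]x_2x_3\in P_4\cap T_p$ evaluates at $e_1,e_2,e_3,e_4$ to $2e_1e_2e_3e_4\neq 0$. Hence $P_n\cap T_p$ is strictly larger than the span of padded instances, the inclusion $P_n\cap T_p\subseteq M_\epsilon$ does not follow from what you have proved, and the surjection $P_n/(P_n\cap T_p)\twoheadrightarrow P_n/(P_n\cap M_\epsilon)$ you need may fail to exist. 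The Leibniz rule only linearizes the first commutator argument and does not recombine the resulting terms into instances of $\widetilde{p}$. To close the gap you must either argue directly on $P_{(n_{1^\epsilon})}(A)$ by ordering monomials modulo $\IdG(A)$ (as the paper does in Lemmas \ref{lemmanovo}--\ref{ref0.3} for the components of degree $g\neq 1$), or, as the paper does here, invoke the involution-algebra analogue of Ananin--Kemer due to Giambruno and Mishchenko applied to the $*$-algebra $A^{(1)}$.
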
 

\begin{proof} We observe that $A^{(1)}$ can be seen as an algebra with involution. Since $A^{(1)}$ satisfies the identity $y_{1,1}z_{2,1}+\gamma z_{2,1}y_{1,1}\equiv 0$  for some $\gamma\in \{0,1,-1\}$ then by [\ref{GM}, Theorem 3], it follows that $m_{\langle \lambda \rangle} \le 1$, for every multipartition $\langle \lambda \rangle$ of type $((\lambda_1)_{1^+},(\lambda_2)_{1^-}).$ In particular, we have that $m_{\langle \mu \rangle} \le 1$ for multipartitions $\langle \mu \rangle $ of types $ ((\lambda_1)_{1^+})$ and $ ( (\lambda_2)_{1^-}).$
\end{proof}

Now we will study the multiplicities corresponding to the compositions $(0, \ldots , n,\ldots , 0 )=(n_{g^{\epsilon}})$ of $n$ and multipartitions of type $\langle \lambda \rangle =(\mu_{\,g^\epsilon})\vdash (n_{g^{\epsilon}})$, where $\epsilon \in \{+,-\}$ and $g\in G\backslash \{1\}$.

\begin{lema}\label{lema1F} 
 Let $g\in G\backslash \{1\}$ and assume that $A$ satisfies at least one of the identities
	\begin{center}
		$y_{1,g^2}y_{2,g}\equiv 0$ \;  or \;$z_{1,g^2}y_{2,g}\equiv 0.$
	\end{center}
	Then $m_{\langle \lambda \rangle}\le 1$, where $\langle \lambda \rangle$ is a multipartition of type $(\mu_{g^+})\vdash (n_{g^+})$.

\end{lema}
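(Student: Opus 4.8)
The plan is to understand precisely what a highest weight vector for a multipartition of type $(\mu_{g^+}) \vdash (n_{g^+})$ looks like, and then to show that under either of the hypothesized identities any two such vectors are forced to be linearly dependent modulo $\IdG(A)$. Recall from the general construction preceding Theorem \ref{multiplicity-} that a highest weight vector associated to a multitableau of shape $\mu$ (sitting in the symmetric part of degree $g$, i.e.\ using the variables $y_{i,g}$) is a product of standard polynomials $\prod_{j} St_{h_j(\mu)}(y_{1,g},\ldots,y_{h_j(\mu),g})$ acted on by a suitable permutation $\sigma^{-1}$. The monomials appearing are words in the $y_{i,g}$. Since each $y_{i,g}$ has homogeneous degree $g$, a product of two such variables lands in degree $g^2$. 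The key structural observation is that the hypothesis controls how two consecutive degree-$g$ symmetric variables interact once their product is viewed as a single degree-$g^2$ element.

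First I would make explicit the reduction provided by Lemma \ref{obsum}: it suffices to bound $m_{\langle\lambda\rangle}$ for multipartitions concentrated in the single slot $g^+$, which is exactly the statement here. So I work entirely inside the subalgebra of $\mathcal{F}$ generated by the symmetric degree-$g$ variables $y_{i,g}$, modulo $\IdG(A)$. The core step is to show the identity $y_{1,g^2}y_{2,g}\equiv 0$ (or $z_{1,g^2}y_{2,g}\equiv 0$) forces a strong collapse: whenever a word in the $y_{i,g}$ has length at least three, the leftmost two letters $y_{i,g}y_{j,g}$ combine to an element of $(A^{(g^2)})^+$ or $(A^{(g^2)})^-$ (its symmetric and skew parts), and multiplying on the right by the next letter $y_{k,g}\in (A^{(g)})^+$ triggers the vanishing identity. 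Thus every monomial of length $\ge 3$ in these variables is zero modulo $\IdG(A)$, and in particular every standard polynomial factor $St_r$ with $r\ge 3$ dies, while the products of such factors telescope to zero as soon as the total degree exceeds what the identity permits.

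The mechanism to make precise is that the product $y_{i,g}y_{j,g}$ decomposes as $\tfrac12(y_{i,g}\circ y_{j,g}) + \tfrac12[y_{i,g},y_{j,g}]$, where the symmetric part $\tfrac12(y_{i,g}\circ y_{j,g})$ is a symmetric element of degree $g^2$ and the commutator $\tfrac12[y_{i,g},y_{j,g}]$ is a skew element of degree $g^2$. If $A$ satisfies $y_{1,g^2}y_{2,g}\equiv 0$ then the symmetric-degree-$g^2$ factor annihilates any further degree-$g$ symmetric variable on the right; if instead $A$ satisfies $z_{1,g^2}y_{2,g}\equiv 0$ then the skew-degree-$g^2$ factor does. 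Either way, working inside a standard polynomial (which is alternating, so no two of its $y$-arguments are equal) and using the commutation/anticommutation already available from Theorem \ref{teoida}, I can reorganize each monomial so that the appropriate degree-$g^2$ piece stands immediately to the left of a surviving degree-$g$ variable, forcing it to vanish. This shows that the only multitableaux whose highest weight vectors can survive are those of shape $\mu$ with $n\le 2$, and by the remark following Theorem \ref{teoida} these already yield at most one independent highest weight vector; combined with Theorem \ref{multiplicity-} this gives $m_{\langle\lambda\rangle}\le 1$.

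The main obstacle I anticipate is the bookkeeping needed to guarantee that, after using the commutation relations to bring two chosen degree-$g$ variables adjacent and fuse them into a degree-$g^2$ element, the fused element really is the \emph{correct} parity (symmetric for the first identity, skew for the second) so that the hypothesized identity applies; the symmetric and skew parts of $y_{i,g}y_{j,g}$ both occur, so one must argue that within a standard (alternating) polynomial the antisymmetrization isolates exactly the skew part of the pairwise product, whereas a symmetric product is isolated by a different grouping. Handling both hypotheses uniformly, and verifying that the alternation in $St_r$ cooperates rather than cancels the term one wants to feed into the identity, is where the delicate argument lies; I would treat the two cases $y_{1,g^2}y_{2,g}\equiv 0$ and $z_{1,g^2}y_{2,g}\equiv 0$ in parallel, isolating respectively the symmetric and skew component of the adjacent product and then invoking the relevant identity to kill every monomial of length $\ge 3$.
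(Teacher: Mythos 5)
There is a genuine gap, and it sits at the center of your argument. You claim that under either hypothesis ``every monomial of length $\ge 3$ in these variables is zero modulo $\IdG(A)$.'' This is false. The product $y_{i,g}y_{j,g}$ splits as $\tfrac12(y_{i,g}\circ y_{j,g})+\tfrac12[y_{i,g},y_{j,g}]$, and the hypothesis kills exactly \emph{one} of the two summands when it is multiplied on the right by $y_{k,g}$: from $y_{1,g^2}y_{2,g}\equiv 0$ one gets $(y_{i,g}\circ y_{j,g})y_{k,g}\equiv 0$, hence
\begin{equation*}
y_{i,g}y_{j,g}y_{k,g}\equiv -\,y_{j,g}y_{i,g}y_{k,g},
\end{equation*}
not $y_{i,g}y_{j,g}y_{k,g}\equiv 0$; the skew summand survives and equals the transposed monomial up to sign. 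A concrete counterexample to your collapse: a commutative algebra such as $F[t]/(t^4)$ with the identity involution and the $\mathbb{Z}_3$-grading $\deg t=g$ has $(A^{(g^2)})^-=0$, so it satisfies $z_{1,g^2}y_{2,g}\equiv 0$ vacuously, yet $y_{1,g}y_{2,g}y_{3,g}$ evaluated at $(t,t,t)$ is $t^3\neq 0$. The alternation in $St_r$ does not rescue the argument either: once adjacent transpositions act by $-1$, one finds $St_3(y_{1,g},y_{2,g},y_{3,g})\equiv 6\,y_{1,g}y_{2,g}y_{3,g}$, which need not vanish, and the highest weight vectors of shape $(n)$ involve essentially no alternation at all. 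A second, smaller, problem: the commutation relations of Theorem \ref{teoida} concern pairs of variables of different homogeneous degree or of different symmetry type, so they give you no license to reorder the variables $y_{i,g}$ among themselves inside the slot $(n_{g^+})$.

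The paper's proof uses exactly the substitution you identify but draws the correct, weaker conclusion. From $y_{1,g^2}y_{2,g}\equiv 0$ it deduces $(y_{1,g}\circ y_{3,g})y_{2,g}\equiv 0$, i.e. $y_{1,g}y_{3,g}y_{2,g}\equiv -y_{3,g}y_{1,g}y_{2,g}$, and applying the involution yields $y_{2,g}y_{3,g}y_{1,g}\equiv -y_{2,g}y_{1,g}y_{3,g}$. These two sign--transposition relations suffice to rewrite any monomial of $P_{(n_{g^+})}$, $n\ge 3$, as $\pm\, y_{1,g}y_{2,g}\cdots y_{n,g}$, so that $P_{(n_{g^+})}(A)=\spam_F\{y_{1,g}\cdots y_{n,g}\}$ has dimension at most $1$; the bound $m_{\langle\lambda\rangle}\le 1$ then follows from $\dim_F P_{(n_{g^+})}(A)=\sum_{\langle\lambda\rangle}m_{\langle\lambda\rangle}d_{\langle\lambda\rangle}$, not from Theorem \ref{multiplicity-} applied to vanishing highest weight vectors. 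The case $z_{1,g^2}y_{2,g}\equiv 0$ is identical with $[y_{1,g},y_{3,g}]$ in place of $y_{1,g}\circ y_{3,g}$. To repair your write-up you would need to replace the vanishing claim by this one-dimensionality claim and carry out the dimension count.
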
 
\begin{proof}

Suppose that $A$ satisfies the identity $y_{1,g^2}y_{2,g}\equiv 0$ and consider $n\geq 3$. Since $y_{1,g}\circ y_{3,g}$ is a symmetric $(G,*)$-polynomial of homogeneous degree $g^2$, we have $(y_{1,g}\circ y_{3,g})y_{2,g}\equiv 0$. Therefore, $y_{1,g}y_{3,g}y_{2,g}\equiv -y_{3,g}y_{1,g}y_{2,g}$ and applying the involution, we get $y_{2,g}y_{3,g}y_{1,g}\equiv -y_{2,g}y_{1,g}y_{3,g}$.	Consequently, for $n\geq 3$, considering the monomials modulo $\IdG(A)$ we have 
    $$P_{(n_{g^+})}(A)=\spam_F\{y_{1,g}y_{2,g}\cdots y_{n,g}\}.$$ 
    
    In this case, we get $m_{(\mu_{g^+})}\leq 1.$ On the other hand, 
if $A$ satisfies $z_{1,g^2}y_{2,g}$, using that $[y_{1,g}, y_{3,g}]$ is a  skew $(G,*)$-polynomial of homogeneous degree $g^2$, we have $[y_{1,g}, y_{3,g}]y_{2,g}\equiv 0.$ Therefore, the result follows in an analogous  way to the previous case. 
\end{proof}

Using the arguments from the proof of [\cite{livroGZ}, Theorem 2.4.5], we conclude that $P_{\langle n \rangle}(A)$ can be seen as a direct summand of $P_{\langle n \rangle}$. Since $P_{\langle n \rangle}\cong FS_{\langle n \rangle},$ we have that $m_{\langle \lambda \rangle}\leq d_{\langle \lambda \rangle}$ in the decomposition of the $S_{\langle n \rangle}$-cocharacter of $A$. This fact will be used in some of the  results in the sequence.

\begin{lema}\label{lemmanovo} Let $g\in G\backslash \{1\}$ and suppose that $A$ satisfies the identity $y_{1,g}y_{3,g}y_{2,g}+y_{2,g}y_{3,g}y_{1,g}\equiv 0$. Then we have $m_{\langle \lambda \rangle}\le 1$, for all multipartition $\langle \lambda \rangle \vdash (3_{g^+})$.  
		\end{lema}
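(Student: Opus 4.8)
The plan is to treat the three multipartitions $\langle \lambda \rangle \vdash (3_{g^+})$ one at a time. Since $(3_{g^+})$ puts all of the weight on the slot of symmetric variables of degree $g$, these multipartitions are exactly $\langle \lambda \rangle = (\mu_{g^+})$ with $\mu$ a partition of $3$, namely $\mu \in \{(3),(2,1),(1^3)\}$. For $\mu=(3)$ and $\mu=(1^3)$ the hook formula gives $d_{\langle \lambda \rangle}=1$, so the bound $m_{\langle \lambda \rangle}\le d_{\langle \lambda \rangle}$ recorded just before the statement already forces $m_{\langle \lambda \rangle}\le 1$. Thus the entire content of the lemma is the case $\mu=(2,1)$, where $d_{(2,1)}=2$ and the cited bound only yields $m_{((2,1)_{g^+})}\le 2$; the hypothesis identity must be used to sharpen this to $1$.

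For that case I would work inside $P_{(3_{g^+})}$, whose basis consists of the six multilinear monomials obtained by ordering the three symmetric degree-$g$ variables $y_{1,g},y_{2,g},y_{3,g}$. Because the identity $y_{1,g}y_{3,g}y_{2,g}+y_{2,g}y_{3,g}y_{1,g}\equiv 0$ holds for all symmetric elements of degree $g$, I may substitute into it every permutation of these three variables. The polynomial is symmetric in its two outer variables, so the six substitutions collapse to the three relations
\begin{align*}
y_{1,g}y_{3,g}y_{2,g}+y_{2,g}y_{3,g}y_{1,g}&\equiv 0,\\
y_{3,g}y_{1,g}y_{2,g}+y_{2,g}y_{1,g}y_{3,g}&\equiv 0,\\
y_{1,g}y_{2,g}y_{3,g}+y_{3,g}y_{2,g}y_{1,g}&\equiv 0,
\end{align*}
one for each choice of the middle variable. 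These three relations have pairwise disjoint supports among the six monomials, hence are linearly independent and express three of the monomials in terms of the other three modulo $\IdG(A)$. Consequently $c_{(3_{g^+})}(A)=\dim_F P_{(3_{g^+})}(A)\le 3$.

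To finish, I would feed this dimension bound into the decomposition (\ref{cmultin-}), which here reads
$$c_{(3_{g^+})}(A)=m_{((3)_{g^+})}\,d_{(3)}+m_{((2,1)_{g^+})}\,d_{(2,1)}+m_{((1^3)_{g^+})}\,d_{(1^3)}=m_{((3)_{g^+})}+2\,m_{((2,1)_{g^+})}+m_{((1^3)_{g^+})}.$$
Since all multiplicities are non-negative, $2\,m_{((2,1)_{g^+})}\le c_{(3_{g^+})}(A)\le 3$, and as $m_{((2,1)_{g^+})}$ is an integer this forces $m_{((2,1)_{g^+})}\le 1$, completing the proof.

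The only delicate point is the reduction of the six substitutions to three \emph{independent} relations: one must verify that the hypothesis, being invariant under interchanging its two outer variables, yields exactly one relation for each position of the middle variable, and that the three resulting relations involve disjoint monomials, so that the quotient has dimension at most $3$. Everything else is the bookkeeping of (\ref{cmultin-}) together with the already-established bound $m_{\langle \lambda \rangle}\le d_{\langle \lambda \rangle}$. Alternatively, one could phrase the same argument through Theorem \ref{multiplicity-}, writing the two highest weight vectors attached to the standard tableaux of shape $(2,1)$ and using the three relations above to show that they are proportional modulo $\IdG(A)$, which again gives $m_{((2,1)_{g^+})}\le 1$.
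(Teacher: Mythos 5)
Your proposal is correct and follows essentially the same route as the paper: both apply the hypothesis identity to the permuted variables to obtain the same three disjoint-support relations, deduce $\dim_F P_{(3_{g^+})}(A)\le 3$, and then combine $c_{(3_{g^+})}(A)=\sum m_{\langle\lambda\rangle}d_{\langle\lambda\rangle}$ with the bound $m_{\langle\lambda\rangle}\le d_{\langle\lambda\rangle}$ to force all multiplicities to be at most $1$. The only difference is presentational (you isolate the shape $(2,1)$ explicitly, while the paper argues by contradiction uniformly over all shapes), so the two arguments coincide in substance.
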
 

 \begin{proof}
   In fact, we know that $P_{(3_{g^+})}(A)$ is linearly generated by the polynomials $y_{\sigma(1),g}y_{\sigma(2),g}y_{\sigma(3),g}$ mod $\IdG(A)$, where $\sigma \in S_3$. Using the identity $y_{1,g}y_{3,g}y_{2,g}+y_{2,g}y_{3,g}y_{1,g}\equiv 0$ we have
$$\begin{array}{ccc}
y_{1,g}y_{2,g}y_{3,g}& \equiv &-y_{3,g}y_{2,g}y_{1,g}; \\
y_{1,g}y_{3,g}y_{2,g}&\equiv &-y_{2,g}y_{3,g}y_{1,g}; \\
y_{2,g}y_{1,g}y_{3,g}&\equiv &-y_{3,g}y_{1,g}y_{2,g}.
\end{array}$$
Therefore,
$$P_{(3_{g^+})}(A)=\mbox{span}_F\{y_{1,g}y_{2,g}y_{3,g}, y_{1,g}y_{3,g}y_{2,g}, y_{2,g}y_{1,g}y_{3,g}\}, $$
	where the monomials were taken mod $\IdG(A)$. So, $\dim_F P_{(3_{g^+})}(A)\leq 3. $ If for some multipartition $\langle \lambda \rangle \vdash \langle 3 \rangle$ we have $m_{\langle \lambda \rangle}\ge 2,$ then $d_{\langle \lambda \rangle}\ge m_{\langle \lambda \rangle}\ge 2$. This implies that
	$$\begin{array}{ccl}\dim_FP_{(3_{g^+})}(A)& =& \sum\limits_{\langle \lambda \rangle \vdash ( 3_{g^+} )}m_{\langle \lambda \rangle}d_{\langle \lambda \rangle} \ge 4,\end{array}$$ a contradiction. 
	Therefore, we have $m_{\langle \lambda \rangle}\le 1$, for all multipartition $\langle \lambda \rangle \vdash (3_{g^+})$. 
\end{proof}

\begin{lema}\label{ref0.1} Let $g\in G\backslash \{1\}$ and suppose that $A$ satisfies the following identities 
\begin{equation}\label{eq-1}
    y_{1,g}y_{3,g}y_{2,g}+y_{2,g}y_{3,g}y_{1,g}\equiv 0\;\;\;
\end{equation}
\begin{equation}\label{eq-2}
\;\;\;y_{1,g}y_{2,g}y_{4,g}y_{3,g}+ y_{2,g}y_{4,g}y_{3,g} y_{1,g} \equiv 0
.\end{equation} 
Then we have $m_{\langle \lambda \rangle}\le 1$, for all multipartition $\langle \lambda \rangle \vdash (4_{g^+})$.

		\end{lema}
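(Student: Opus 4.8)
The plan is to follow the template of Lemma \ref{lemmanovo}: first bound $\dim_F P_{(4_{g^+})}(A)$ from above by reducing the natural spanning set modulo $\IdG(A)$, and then play this bound off against the dimension formula $c_{(4_{g^+})}(A)=\sum_{\langle\lambda\rangle\vdash (4_{g^+})} m_{\langle\lambda\rangle}d_{\langle\lambda\rangle}$ coming from (\ref{cmultin-}). Writing $y_i:=y_{i,g}$ for brevity, the space $P_{(4_{g^+})}(A)$ is spanned, modulo $\IdG(A)$, by the $24$ monomials $y_{\sigma(1)}y_{\sigma(2)}y_{\sigma(3)}y_{\sigma(4)}$ with $\sigma\in S_4$, and the entire point is to show that the two hypotheses collapse this list down to at most three.

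First I would record the multilinear degree-$4$ consequences of the two identities. Since $y_1y_3y_2\equiv -y_2y_3y_1$ holds for all symmetric homogeneous elements of degree $g$, multiplying (\ref{eq-1}) on the right and on the left by a fourth such variable shows that reversing any three \emph{consecutive} factors changes the sign, that is $y_ay_by_cy_d\equiv -y_cy_by_ay_d$ and $y_ay_by_cy_d\equiv -y_ay_dy_cy_b$. Here it is worth stressing the subtle point that, because (\ref{eq-1}) has degree $3$ and $g\neq 1$, one cannot substitute a product of two degree-$g$ variables (such a product has degree $g^2\neq g$); hence only reversals of consecutive triples are produced, and this is exactly why the second identity is needed. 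Relabelling the multilinear identity (\ref{eq-2}) by interchanging the third and fourth variables rewrites it as $y_1y_2y_3y_4+y_2y_3y_4y_1\equiv 0$, so a cyclic shift also changes the sign: $y_ay_by_cy_d\equiv -y_by_cy_dy_a$.

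Next I would use these three relations to reduce the $24$ monomials to $3$. The cleanest bookkeeping is to regard the relations as a signed action on the four positions: the two reversals act as the transpositions $(1\,3)$ and $(2\,4)$ and the cyclic shift as the $4$-cycle $(1\,2\,3\,4)$, each carrying the sign $-1$. These three permutations generate a dihedral group of order $8$, the sign is a well-defined character of it (one checks the defining relations, e.g.\ $(1\,3)(1\,2\,3\,4)(1\,3)=(1\,2\,3\,4)^{-1}$, are compatible with the sign assignment), and since every monomial has four distinct letters its stabiliser is trivial. Consequently the $24$ monomials split into $24/8=3$ orbits, within each of which all monomials agree up to sign modulo $\IdG(A)$; as the stabilisers are trivial, no representative is forced to vanish. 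Equivalently, one reduces by hand as in Lemma \ref{lemmanovo}: the cyclic shift brings every monomial to one beginning with $y_1$, and the right-reversal then pairs these six words into three $\pm$-pairs, after which the remaining relations are checked to add nothing. Either way, $\dim_F P_{(4_{g^+})}(A)\le 3$.

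Finally, suppose toward a contradiction that $m_{\langle\lambda\rangle}\ge 2$ for some $\langle\lambda\rangle\vdash (4_{g^+})$. Because $m_{\langle\lambda\rangle}\le d_{\langle\lambda\rangle}$, the partition $\lambda$ of $4$ satisfies $d_\lambda\ge 2$, so $\lambda\in\{(3,1),(2,2),(2,1,1)\}$; in each of these cases $m_{\langle\lambda\rangle}d_{\langle\lambda\rangle}\ge 2\cdot 2=4$, whence (\ref{cmultin-}) gives $\dim_F P_{(4_{g^+})}(A)\ge 4>3$, a contradiction. Therefore $m_{\langle\lambda\rangle}\le 1$ for every $\langle\lambda\rangle\vdash (4_{g^+})$. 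I expect the only genuinely delicate step to be the monomial reduction of the third paragraph: one must verify that \emph{all} multilinear degree-$4$ consequences of both hypotheses have been exploited, so that the sharp bound $3$ is attained rather than the weaker bound $6$ produced by (\ref{eq-1}) alone (which would not suffice, since it leaves room for $m_{(2,2)}=2$), while simultaneously confirming that no further relation collapses a representative to zero.
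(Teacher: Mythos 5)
Your proof is correct and follows essentially the same route as the paper: bound $\dim_F P_{(4_{g^+})}(A)$ by $3$ using the two identities, then derive a contradiction from $m_{\langle \lambda \rangle}\le d_{\langle \lambda \rangle}$ and the formula $\dim_F P_{(4_{g^+})}(A)=\sum_{\langle \lambda \rangle} m_{\langle \lambda \rangle}d_{\langle \lambda \rangle}$. The only difference is cosmetic: where the paper exhibits explicit chains of signed equivalences among the $24$ monomials and asserts the remaining cases are analogous, you organize the same reduction as a sign-compatible action of a dihedral group of order $8$ on the positions, which gives a cleaner justification of the count $24/8=3$.
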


     \begin{proof} 
 By hypothesis, modulo $\IdG(A)$, we have that 
 $$\begin{array}{clccccccl}
 y_{1,g}y_{2,g}y_{3,g}y_{4,g} \equiv& - y_{2,g}y_{3,g}y_{4,g} y_{1,g} \equiv& y_{2,g}y_{1,g}y_{4,g} y_{3,g} \equiv &- y_{4,g}y_{1,g}y_{2,g} y_{3,g}  \equiv & \\ y_{4,g}y_{3,g}y_{2,g} y_{1,g} \equiv & - y_{1,g}y_{4,g}y_{3,g}y_{2,g} \equiv &- y_{3,g}y_{2,g}y_{1,g}y_{4,g} \equiv & y_{3,g}y_{4,g}y_{1,g}y_{2,g}; 
 \end{array}$$

Analogously, we can verify that the remaining $(G,*)$-polynomials in $P_{(4_{g^+})}(A)$ are equivalent, modulo $Id^{(G,*)}(A)$, to $y_{2,g}y_{1,g}y_{3,g}y_{4,g}$ or $y_{4,g}y_{2,g}y_{3,g}y_{1,g}$. Thus,  
$$
P_{(4_{g^+})}(A) = \spam_F \{ \, y_{1,g}y_{2,g}y_{3,g}y_{4,g}, \; y_{2,g}y_{1,g}y_{3,g}y_{4,g}, \; y_{4,g}y_{2,g}y_{3,g}y_{1,g} \,\}.$$

If, for some multipartition $\langle \lambda \rangle \vdash (4_{g^+})$, we have $m_{\langle \lambda \rangle}\ge 2,$ then $d_{\langle \lambda \rangle}\geq m_{\langle \lambda \rangle}\ge 2.$ Therefore, $$\begin{array}{ccl}\dim_FP_{(4_{g^+})}(A)& =& \sum\limits_{\langle \lambda \rangle \vdash (4_{g^+})}m_{\langle \lambda \rangle}d_{\langle \lambda \rangle} \ge 4 \end{array},$$  an 
	absurd. So, $m_{\langle \lambda \rangle}\le 1,$  for every multipartition   $\langle \lambda \rangle \vdash (4_{g^+})$.
\end{proof}

In what follows, we  present an example that will help us to prove the next results. Suppose that $A$ satisfies the identities (\ref{eq-1}) and (\ref{eq-2}) and consider the quotient space $P_{(5_{g^+})}(A)$. 
We can notice that the variables $y_{2,g}$ and $y_{1,g}$ in the $(G,*)$-polynomial $y_{2,g}y_{1,g}y_{3,g}y_{4,g}y_{5,g} \in P_{(5_{g^+})}$ are in positions other than the usual ones, i.e. those where the indexes of the variables coincide with their positions. Our goal is to order the variables modulo $\IdG(A)$ to estimate the value of  the dimension of the space $P_{(5_{g^+})}(A)$.

 Using the identity (\ref{eq-2}), we have 
$$y_{2,g}y_{1,g}y_{3,g}y_{4,g}y_{5,g}\equiv y_{2,g}y_{3,g}y_{4,g}y_{5,g}y_{1,g}   \equiv y_{5,g}y_{2,g}y_{3,g}y_{4,g}y_{1,g}.
$$ 

Note that in the last congruence the variables $y_{5,g}$ and $y_{1,g}$ are in odd positions, i.e. the same parity as the indexes of the variables. In this case, using (\ref{eq-1}), we have: $$y_{5,g}y_{2,g}y_{3,g}y_{4,g}y_{1,g}  \equiv y_{5,g}y_{2,g}y_{1,g}y_{4,g}y_{3,g} \equiv y_{1,g}y_{2,g}y_{5,g}y_{4,g}y_{3,g} \equiv y_{1,g}y_{2,g}y_{3,g}y_{4,g}y_{5,g}.$$ 

Therefore,   $$y_{2,g}y_{1,g}y_{3,g}y_{4,g}y_{5,g} \equiv -  y_{1,g}y_{2,g}y_{3,g}y_{4,g}y_{5,g} .$$

 In general, using the identities (\ref{eq-1}) and (\ref{eq-2}) and some extensive calculations, we see that for any $\sigma \in S_5$ we have $y_{\sigma(1),g}y_{\sigma(2), g}y_{\sigma(3),g}y_{\sigma(4),g}y_{\sigma(5),g}\equiv \pm y_{1,g}y_{2,g}y_{3,g}y_{4,g}y_{5,g}$ and so, considering the monomials mod $\IdG(A)$ we have $P_{(5_{g^+})}(A)=\spam_F\{y_{1,g}y_{2,g}\cdots y_{5,g}\}. $

 In the next results, we consider the generating monomials in $P_{\langle n\rangle}(A)$ modulo $\IdG(A)$.
 
 \begin{lema}\label{ref0.2} 
Assume that $A$ satisfies the identities (\ref{eq-1}) and (\ref{eq-2}). Then, for $n>4,$ we have 
  	$P_{(n_{g^+})}(A)= \spam_F\{y_{1,g}y_{2,g}\cdots y_{n,g}\}$. Consequently, $m_{\langle \lambda \rangle}\le 1$, for all multipartition $\langle \lambda \rangle \vdash (n_{g^+})$ with $n> 4$.
	\end{lema}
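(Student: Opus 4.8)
The plan is to exploit the two hypotheses as local rewriting rules on monomials whose letters are all symmetric of homogeneous degree $g$. Reading (\ref{eq-1}) and (\ref{eq-2}) as multilinear identities and relabelling the substituted variables, they are equivalent to the statements that, for arbitrary symmetric homogeneous elements of degree $g$, one has $uvw\equiv -wvu$ and $uvwx\equiv -vwxu$ modulo $\IdG(A)$; that is, reversing any three consecutive letters of such a monomial, or cyclically shifting any four consecutive letters, only changes the monomial by a sign. Because $\IdG(A)$ is a $T_{(G,*)}$-ideal and every letter occurring in $P_{(n_{g^+})}$ is symmetric of degree $g$, these two moves may be applied to any three (respectively four) consecutive letters inside a longer monomial of $P_{(n_{g^+})}$, the remaining letters acting as spectators multiplied on the left and on the right.

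First I would reduce the sorting problem to realizing adjacent transpositions of positions. Since the adjacent transpositions $(i,i+1)$, $1\le i\le n-1$, generate $S_n$, it suffices to show that for every $i$ a monomial of $P_{(n_{g^+})}$ is congruent, up to sign, to the monomial obtained by interchanging the letters occupying positions $i$ and $i+1$ while fixing all other positions. Given such an $i$ with $n\ge 5$, choose a window of five consecutive positions containing both $i$ and $i+1$ (possible since $n\ge 5$; take a window ending at position $n$ if $i+1$ is near the right end). The computation carried out for $n=5$ in the discussion preceding this lemma shows, under exactly the present hypotheses (\ref{eq-1}) and (\ref{eq-2}), that $P_{(5_{g^+})}(A)=\spam_F\{y_{1,g}\cdots y_{5,g}\}$; equivalently, any two orderings of five distinct symmetric degree-$g$ letters are congruent up to sign modulo $\IdG(A)$. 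Applying this to the five letters in the chosen window and multiplying by the spectator monomials on both sides realizes the desired adjacent transposition up to sign inside the full monomial.

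Composing these moves, every monomial $y_{\sigma(1),g}\cdots y_{\sigma(n),g}$ with $\sigma\in S_n$ becomes congruent to $\pm\, y_{1,g}\cdots y_{n,g}$ modulo $\IdG(A)$, so $P_{(n_{g^+})}(A)=\spam_F\{y_{1,g}\cdots y_{n,g}\}$ and hence $\dim_F P_{(n_{g^+})}(A)\le 1$. Finally, as in Lemmas \ref{lemmanovo} and \ref{ref0.1}, relation (\ref{cmultin-}) gives $\dim_F P_{(n_{g^+})}(A)=\sum_{\langle\lambda\rangle\vdash(n_{g^+})}m_{\langle\lambda\rangle}d_{\langle\lambda\rangle}$; since each $d_{\langle\lambda\rangle}\ge 1$ and the sum is at most $1$, we conclude $m_{\langle\lambda\rangle}\le 1$ for every $\langle\lambda\rangle\vdash(n_{g^+})$ with $n>4$.

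I expect the main obstacle to be the bookkeeping that legitimises applying (\ref{eq-1}) and (\ref{eq-2}) as purely local moves inside a long monomial: one must check that the required congruences are genuine consequences of the hypotheses in the $T_{(G,*)}$-ideal, so that substituting the actual letters and multiplying by spectator monomials is valid, and that a window of five positions can always be chosen to contain a prescribed adjacent pair. The genuinely computational part, namely that five letters can be arbitrarily reordered up to sign, has already been settled in the $n=5$ discussion, so the new work is precisely this reduction to adjacent transpositions together with the final passage from $\dim_F P_{(n_{g^+})}(A)\le 1$ to the multiplicity bound.
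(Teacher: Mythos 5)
Your argument is correct and runs on the same engine as the paper's: the identities (\ref{eq-1}) and (\ref{eq-2}) read as sign-rewriting rules on monomials in symmetric degree-$g$ letters, the five-letter reordering fact established in the example preceding the lemma, and the final passage from $\dim_F P_{(n_{g^+})}(A)\le 1$ to $m_{\langle \lambda \rangle}\le 1$ via $\sum_{\langle \lambda \rangle}m_{\langle \lambda \rangle}d_{\langle \lambda \rangle}$. Where you differ is in how the sorting is organized. The paper argues by induction on $n$: the inductive hypothesis sorts the first $n-1$ letters, so every monomial becomes the ascending word with some $y_{j,g}$ displaced to the last slot; identity (\ref{eq-1}) then pulls $y_{j,g}$ back among the first $n-1$ positions, and the inductive hypothesis is applied once more. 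You instead generate $S_n$ by adjacent transpositions and realize each one, up to sign, by applying the $S_5$-reordering inside a length-$5$ window containing the two positions, the remaining letters acting as spectators. The two routes are of comparable length; yours avoids the induction and makes the role of the $n=5$ computation more transparent, at the cost of invoking its full strength (arbitrary reordering of five letters) rather than the special cases the induction needs. In both versions the only point requiring care --- which you flag correctly --- is that the congruences being used are genuine elements of $\IdG(A)$, so that substituting letters and multiplying by spectator monomials on either side is legitimate; this holds because $\IdG(A)$ is a $T_{(G,*)}$-ideal and the window congruences were derived from (\ref{eq-1}) and (\ref{eq-2}).
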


\begin{proof} 
In this proof, we use induction on $n$. By the previous example, the result follows for $n=5$. Now, consider $n>5$ and assume that for $i<n,$ we have $P_{({i}_{g^+})}(A)=\spam_F\{y_{1,g}y_{2,g}\cdots y_{i,g}\}$. This implies that the first $n-1$ variables of any monomial in $P_{(n_{g^+})}(A)$ can be ordered in ascending order. Therefore, $$P_{(n_{g^+})}(A)=\spam_F\{y_{1,g}y_{2,g}\cdots y_{j-1,g}y_{j+1,g}\cdots y_{n,g} y_{j,g} \mid 1\le j\le n \}.$$ 

Using the identity (\ref{eq-1}), we have $$y_{1,g}y_{2, g}\cdots y_{j-1,g}y_{j+1,g}\cdots y_{n,g} y_{j,g}\equiv y_{1,g}y_{2,g}\cdots y_{j-1,g}y_{j+1,g}\cdots y_{j,g}y_{n-2,g}y_{n,g}. $$

By the induction hypothesis, the first $n-1$ variables can be reordered and this implies that $P_{(n_{g^+})}(A)=\spam_F\{y_{1,g}y_{2,g}\cdots y_{n,g}\}. $ Finally, since $$\dim P_{(n_{g^+})}(A)= \sum\limits_{\langle \lambda \rangle \vdash (n_{g^+})} {m_{\langle \lambda \rangle}d_{\langle \lambda \rangle}}=1,$$ then we have $m_{\langle \lambda \rangle}\le 1$, for all multipartition $\langle \lambda \rangle \vdash (n_{g^+})$.
\end{proof}

 \begin{lema}{\label{ref0.3}} 
Assume that $A$ satisfies the identity $y_{1,g}y_{3,g}y_{2,g}-y_{2,g}y_{1,g}y_{3,g}\equiv 0.$ Then, for $n\ge 3$ we have $m_{\langle \lambda \rangle}\le 1$, for all multipartition $\langle \lambda \rangle \vdash (n_{g^+}).$
	\end{lema}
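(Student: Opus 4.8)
The plan is to pin down the $S_n$-module structure of $P_{(n_{g^+})}(A)$, not merely its dimension. First I would write out the multilinear consequences of the hypothesis inside $P_{(n_{g^+})}$: relabelling the three variables of $y_{1,g}y_{3,g}y_{2,g}-y_{2,g}y_{1,g}y_{3,g}$ by distinct symmetric variables of degree $g$ and multiplying on the left and right by an arbitrary monomial in the remaining such variables gives, modulo $\IdG(A)$, the congruence $\cdots y_{a,g}y_{c,g}y_{b,g}\cdots\equiv\cdots y_{b,g}y_{a,g}y_{c,g}\cdots$. In other words, any block of three consecutive variables of a monomial may be cyclically shifted to the right, which as a permutation of the $n$ positions is precisely the adjacent $3$-cycle $(i,\,i+1,\,i+2)$.

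Next I would let $N\subseteq P_{(n_{g^+})}$ be the $S_n$-submodule generated by all such congruences, so that $N\subseteq P_{(n_{g^+})}\cap\IdG(A)$. Since the adjacent $3$-cycles $(i,\,i+1,\,i+2)$ with $1\le i\le n-2$ generate the alternating group $A_n$, any two monomials whose variable arrangements differ by an even permutation are congruent modulo $N$; and since any two odd arrangements also differ by an even permutation, the quotient $P_{(n_{g^+})}/N$ is spanned by only two elements, namely $u:=y_{1,g}y_{2,g}\cdots y_{n,g}$ (the even arrangements) and $v:=y_{2,g}y_{1,g}y_{3,g}\cdots y_{n,g}$ (the odd ones). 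A direct check of the action shows that even permutations fix both $u$ and $v$ while odd permutations interchange them; hence $u+v$ spans a trivial submodule and $u-v$ a sign submodule, so $P_{(n_{g^+})}/N\cong \text{triv}\oplus\text{sign}$, with character $\chi_{((n)_{g^+})}+\chi_{((1^n)_{g^+})}$. As $P_{(n_{g^+})}(A)$ is a quotient of $P_{(n_{g^+})}/N$, its cocharacter is a subcharacter of this multiplicity-free character, and therefore $m_{\langle\lambda\rangle}\le 1$ for every $\langle\lambda\rangle\vdash(n_{g^+})$.

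I expect the main obstacle to be resisting a purely dimensional argument. Every congruence above is an even permutation of the positions, so even and odd arrangements never merge and $\dim_F P_{(n_{g^+})}(A)$ may genuinely equal $2$; the bound $\dim_F\le 2$ by itself would still allow $m_{((n)_{g^+})}=2$, so this is exactly the step where the earlier strategy of Lemma \ref{ref0.2} breaks down. The decisive point is the explicit identification of the two-dimensional quotient with $\text{triv}\oplus\text{sign}$, which is multiplicity-free, so that any further quotient stays multiplicity-free. For this reason I would build $N$ using only the single-variable relabellings: other consequences (for instance substitutions by longer products that happen to have degree $g$, or the images of the identity under $\ast$) can only collapse $P_{(n_{g^+})}/N$ further, and so cannot raise any multiplicity.
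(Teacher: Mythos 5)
Your proof is correct, and it takes a genuinely different route from the paper's. The paper argues by induction on $n$ that, modulo $\IdG{(A)}$, the space $P_{(n_{g^+})}(A)$ is spanned by the two monomials $y_{1,g}\cdots y_{n-1,g}y_{n,g}$ and $y_{1,g}\cdots y_{n,g}y_{n-1,g}$, so that $\dim_F P_{(n_{g^+})}(A)\le 2$; it then excludes $m_{\langle\lambda\rangle}\ge 2$ not by dimension count alone but by invoking the inequality $m_{\langle\lambda\rangle}\le d_{\langle\lambda\rangle}$ (recorded just before Lemma \ref{lemmanovo}), which forces $\sum m_{\langle\lambda\rangle}d_{\langle\lambda\rangle}\ge 4$, a contradiction. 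So your concern that $\dim_F\le 2$ by itself still permits $m_{((n)_{g^+})}=2$ is well founded, but the paper closes that loophole with $m_{\langle\lambda\rangle}\le d_{\langle\lambda\rangle}$ rather than with a structural identification of the module. Your argument instead recognizes the multilinear consequences of the identity as realizing the adjacent $3$-cycles on positions, so that the submodule $N$ they generate is precisely the kernel of the natural surjection from $FS_n$ onto the module induced from the trivial representation of the alternating group, which decomposes as $\chi_{(n)}+\chi_{(1^n)}$; any further quotient is then multiplicity-free. This buys you a proof with no induction on $n$ (the inductive reordering step is the least transparent part of the paper's argument) together with the sharper conclusion that only $\lambda=(n)$ and $\lambda=(1^n)$ can occur; the paper's version is more elementary, avoiding induced representations, and follows the same template as Lemmas \ref{lemmanovo} and \ref{ref0.2}. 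Both proofs are complete.
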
 

 \begin{proof}
    We will start the proof by showing, by induction on $n$, that $$P_{(n_{g^+})}(A)=\spam_F\{y_{1,g}y_{2,g}\cdots y_{n-1,g}y_{n,g}, ~y_{1,g}y_{2,g}\cdots y_{n,g}y_{n-1,g}\}.$$

     We assume that $n=3$ and consider the space $P_{(3_{g^+})}(A),$ generated by the following $(G,*)$-polynomials 
    $$y_{1,g}y_{2,g}y_{3,g} , ~y_{2,g}y_{1,g}y_{3,g}, ~y_{1,g}y_{3,g}y_{2,g}, ~y_{3,g}y_{1,g}y_{2,g}, ~y_{3,g}y_{2,g}y_{1,g}, ~y_{2,g}y_{3,g}y_{1,g}.$$
    
    Since $A$ satisfies $y_{1,g}y_{3,g}y_{2,g}-y_{2,g}y_{1,g}y_{3,g}\equiv 0,$ we have that 
    $$y_{1,g}y_{2,g}y_{3,g}\equiv y_{3,g} y_{1,g}y_{2,g}\,\,\mbox{ and }\,\,\,y_{2,g}y_{3,g}y_{1,g} \equiv y_{1,g} y_{2,g}y_{3,g};$$
    $$y_{1,g}y_{3,g}y_{2,g} \equiv y_{2,g}y_{1,g}y_{3,g} \,\,\mbox{ and }\,\,\,y_{3,g}y_{2,g}y_{1,g}\equiv y_{1,g} y_{3,g}y_{2,g} .$$

    Therefore, $P_{(3_{g^+})}(A) = \spam_F\{y_{1,g}y_{2,g}y_{3,g}, ~y_{1,g}y_{3,g}y_{2,g} \}.$ Now we 
    assume that $$P_{({n-1}_{g^+})}(A)=\spam_F\{y_{1,g}y_{2,g}\cdots y_{n-2,g}y_{n-1,g}, ~y_{1,g}y_{2,g}\cdots y_{n-3,g}y_{n-1,g}y_{n-2,g}\}. $$ 
    
    Then the $n-1$ first variables of any monomial in $P_{({n}_{g^+})}(A)$ can be reordered so that the largest index among the indexes of the variables is either in the last or in the second-to-last position among them. So, $P_{({n}_{g^+})}(A)$ is generated by $(G,*)$-polynomials of the form 
    \begin{equation}\label{primeira}
    y_{1,g}y_{2,g}\cdots y_{n-2,g}y_{n-1,g}y_{n,g},\;\;\; ~y_{1,g}y_{2,g}\cdots y_{n-3,g}y_{n-1,g}y_{n-2,g}y_{n,g},
    \end{equation}
\begin{equation}\label{segunda}
    y_{1,g}\ldots y_{n-2,g}y_{n,g}y_{n-1,g}y_{i,g},\;\;\; ~ y_{1,g}\ldots y_{n-1,g}y_{n,g}y_{i,g},
    \end{equation}
    for some $1\leq i\leq n-3.$ Using the relation $y_{1,g}y_{2,g}y_{3,g}\equiv y_{2,g}y_{3,g}y_{1,g}$ given above and the induction hypothesis, we have the identities in (\ref{segunda}) can be written as a linear combination of the identities in (\ref{primeira}), modulo $\IdG(A).$ 
    Therefore, $P_{({n}_{g^+})}(A)$ is generated by the polynomials $y_{1,g}y_{2,g}\cdots y_{n-2,g}y_{n,g}y_{n-1, g}$ and $y_{1,g}y_{2,g}\cdots y_{n-2,g}y_{n-1,g}y_{n,g}$ and so, we have $\dim _F P_{(n_{g^+})}(A)\le 2$. Note that if $m_{\langle \lambda \rangle} \geq 2$, for some multipartition $\langle \lambda \rangle\vdash (n_{g^+})$, then we would have $$\dim_F P_{(n_{g^+})}(A)= \sum\limits_{\langle \lambda \rangle \vdash (n_{g^+})} {m_{\langle \lambda \rangle}d_{\langle \lambda \rangle}}\ge 4,$$ a contradiction. Therefore,  we conclude that $m_{\langle \lambda \rangle}\leq 1$, for every $\langle \lambda \rangle\vdash (n_{g^+})$.  
     \end{proof} 
\begin{lema}\label{lema2F} 
 Let $g\in G\backslash \{1\}$  and assume that $A$ satisfies at least one identity in each one of the following items
	\begin{itemize}
		\item[1)]$y_{1,g^2}y_{2,g}+\gamma_{i} y_{2,g}y_{1,g^2}\equiv 0;$
		
		\item[2)]$z_{1,g^2}y_{2,g}+\gamma_{l} y_{2,g}z_{1,g^2} \equiv 0;$
		
		\item[3)] $y_{1,g}z_{2,g^3}+\gamma_{m} z_{2,g^3}y_{1,g}\equiv 0;$
		
	\end{itemize} where $\gamma_{i}, \gamma_{l},$ $\gamma_{m} \in\{ 0,1,-1\}.$ Then, for $n\geq 3,$ we have $m_{\langle \lambda \rangle}\le 1$, for all multipartition $\langle \lambda \rangle=(\mu_{g^+})$ of  $(n_{g^+})$.
\end{lema}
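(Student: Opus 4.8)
The plan is to reduce the statement to the sub-lemmas already proved, by feeding concrete symmetric and skew elements of degrees $g^2$ and $g^3$, built only from the homogeneous symmetric variables $y_{i,g}$, into the hypotheses 1), 2) and 3). First I would dispose of the degenerate coefficients: if $\gamma_i=0$ in 1) or $\gamma_l=0$ in 2), then $A$ satisfies $y_{1,g^2}y_{2,g}\equiv 0$ or $z_{1,g^2}y_{2,g}\equiv 0$, and Lemma \ref{lema1F} already gives $m_{\langle\lambda\rangle}\le1$ for $\langle\lambda\rangle=(\mu_{g^+})$. Hence from now on I would assume the coefficients in 1) and 2) are $\pm 1$.

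Next I would substitute the symmetric element $y_{1,g}\circ y_{3,g}$ (of degree $g^2$) for $y_{1,g^2}$ in 1) and the skew element $[y_{1,g},y_{3,g}]=y_{1,g}y_{3,g}-y_{3,g}y_{1,g}$ (also of degree $g^2$) for $z_{1,g^2}$ in 2). This yields two identities of $A$ among the six monomials $y_{\sigma(1),g}y_{\sigma(2),g}y_{\sigma(3),g}$, $\sigma\in S_3$. Adding and subtracting them, and exploiting the freedom to relabel the three symmetric variables (the identities of 1) and 2) hold under every substitution), I expect to obtain, according to the four sign patterns of $(\gamma_i,\gamma_l)$, exactly one of: the identity $y_{1,g}y_{3,g}y_{2,g}-y_{2,g}y_{1,g}y_{3,g}\equiv0$ of Lemma \ref{ref0.3}; the identity (\ref{eq-1}); the stronger relation $y_{1,g}y_{2,g}y_{3,g}\equiv0$; or the \emph{reversal} relation $y_{1,g}y_{3,g}y_{2,g}-y_{2,g}y_{3,g}y_{1,g}\equiv0$. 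In the first case Lemma \ref{ref0.3} settles all $n\ge3$ at once; in the third case $A$ satisfies $y_{1,g}y_{2,g}y_{3,g}\equiv0$, so $P_{(n_{g^+})}(A)=0$ for every $n\ge3$ and there is nothing left to prove.

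When (\ref{eq-1}) is the outcome, I still need its degree-four companion (\ref{eq-2}), and this is where hypothesis 3) enters. The plan is to substitute the skew degree-$g^3$ element $y_{2,g}y_{4,g}y_{3,g}-y_{3,g}y_{4,g}y_{2,g}$ for $z_{2,g^3}$ in 3), obtaining a relation among four-letter monomials, and then to apply (\ref{eq-1}) to the two ``misordered'' triples $y_{3,g}y_{4,g}y_{2,g}$ appearing in it; the two applications collapse the relation precisely to (\ref{eq-2}), up to an overall sign governed by $\gamma_m$. That sign is immaterial, because the reordering arguments of Lemmas \ref{ref0.1} and \ref{ref0.2} use (\ref{eq-2}) only to contract the spanning set of $P_{(n_{g^+})}(A)$, so either sign yields the same dimension bound; the value $\gamma_m=0$ is even easier, giving $y_{1,g}y_{2,g}y_{4,g}y_{3,g}\equiv0$ directly. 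With (\ref{eq-1}) and (\ref{eq-2}) in hand, Lemmas \ref{lemmanovo}, \ref{ref0.1} and \ref{ref0.2} provide $m_{\langle\lambda\rangle}\le1$ for $n=3$, $n=4$ and $n>4$, which is the assertion.

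The step I expect to be the main obstacle is the last sign pattern, the reversal relation $y_{1,g}y_{3,g}y_{2,g}\equiv y_{2,g}y_{3,g}y_{1,g}$. Here every skew element of degree $g^3$ built from three symmetric variables already vanishes modulo $\IdG(A)$, so hypothesis 3) is vacuous and neither (\ref{eq-2}) nor the identity of Lemma \ref{ref0.3} is at my disposal; the crude dimension-versus-degree estimate used in the earlier lemmas is then too weak. For this case I would argue directly on the $S_n$-module $P_{(n_{g^+})}(A)$: applying the reversal relation to every consecutive triple swaps the first and third entry, and the transpositions of positions differing by two generate the Young subgroup $S_{\lceil n/2\rceil}\times S_{\lfloor n/2\rfloor}$ acting on the odd and even positions separately. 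The quotient $P_{(n_{g^+})}(A)$ is thus a homomorphic image of the permutation module obtained by inducing the trivial character from this Young subgroup, which is multiplicity-free by Young's rule, forcing $m_{\langle\lambda\rangle}\le1$. Pinning down that this contraction is exactly the induced module (so that no irreducible is counted with multiplicity exceeding one) is the delicate point; the remainder of the proof is the routine bookkeeping of the sign cases described above.
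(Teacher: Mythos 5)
Your case analysis coincides with the paper's: you dispose of $\gamma_i=0$ or $\gamma_l=0$ via Lemma \ref{lema1F}, substitute $y_{1,g}\circ y_{3,g}$ and $[y_{1,g},y_{3,g}]$ into 1) and 2), and split on the signs, obtaining $y_{1,g}y_{3,g}y_{2,g}\equiv 0$ when $(\gamma_i,\gamma_l)=(1,1)$, identity (\ref{eq-1}) when $(\gamma_i,\gamma_l)=(1,-1)$ (after which hypothesis 3) applied to the skew element $y_{2,g}y_{4,g}y_{3,g}-y_{3,g}y_{4,g}y_{2,g}$ yields (\ref{eq-2}) up to sign, and Lemmas \ref{lemmanovo}, \ref{ref0.1}, \ref{ref0.2} finish), and the cyclic relation of Lemma \ref{ref0.3} when $(\gamma_i,\gamma_l)=(-1,-1)$ --- exactly as in the paper. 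The one place you genuinely diverge is the reversal case $(\gamma_i,\gamma_l)=(-1,1)$, which the paper dismisses with ``we proceed analogously to the case $\gamma_i=1$, $\gamma_l=-1$''. Your objection there is well taken: the derived relation $y_{1,g}y_{3,g}y_{2,g}\equiv y_{2,g}y_{3,g}y_{1,g}$ says every multilinear degree-three monomial in the $y_{i,g}$ equals its reversal, so the skew element $y_{2,g}y_{4,g}y_{3,g}-y_{3,g}y_{4,g}y_{2,g}$ already lies in $\IdG{(A)}$, hypothesis 3) produces no analogue of (\ref{eq-2}), and the resulting bound $\dim_F P_{(n_{g^+})}(A)\le\binom{n}{\lfloor n/2\rfloor}$ is too weak for the degree-count trick of the earlier lemmas. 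Your replacement argument is correct: the admissible swaps of positions $i$ and $i+2$ generate the Young subgroup $S_{\lceil n/2\rceil}\times S_{\lfloor n/2\rfloor}$ acting on odd and even slots, so $P_{(n_{g^+})}(A)$ is a quotient of the permutation module induced from the trivial character of that subgroup, which is multiplicity-free by Young's rule since Kostka numbers for two-part contents are at most $1$. The point you flag as delicate is in fact harmless, because a homomorphic image of a multiplicity-free module is multiplicity-free; you do not need the quotient to equal the induced module. In short, your proof is sound, follows the paper's route in all but one subcase, and in that subcase supplies an argument where the paper's ``analogously'' is not actually justified.
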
 
\begin{proof}

    Initially, note that if $\gamma_{i} = 0$ ($\gamma_l=0$, respec.) then $A$ satisfies the identity $y_{1,g^2}y_{2,g}\equiv 0$ ($z_{1,g^2}y_{2,g}\equiv 0$, respec.). Therefore, the result follows from Lemma \ref{lema1F}. Next we study the other cases.
    
    Suppose that $A$ satisfies the identity of item $1)$ with $\gamma_{i}=1$. We then move on to analyze the identities in item $2)$. 
    If $\gamma_{l}=0,$ we have  $y_{1,g^2}y_{2,g}+y_{2,g}y_{1,g^2} \equiv 0 \; \mbox{and}\;z_{1,g^2}y_{2,g}  \equiv 0. $ In this case, the result follows from Lemma \ref{lema1F}. 

   	Now if $\gamma_{l}=1,$ we have  
$y_{1,g^2}y_{2,g}+y_{2,g}y_{1,g^2}\equiv 0 \;\;\mbox{and}\;\ z_{1,g^2}y_{2,g}+y_{2,g}z_{1,g^2}\equiv 0.$  
Since $y_{1,g}\circ y_{3,g}$ and $[y_{1,g},y_{3,g}]$ are, respectively, symmetric and skew $(G,*)$-polynomials of homogeneous degree $g^2$, we have
$$(y_{1,g}\circ y_{3,g})y_{2,g}+y_{2,g}(y_{1,g}\circ y_{3,g})\equiv 0 \;\;\;\mbox{and}\;\;\; [y_{1,g},y_{3,g}]y_{2,g}+y_{2,g}[y_{1,g},y_{3,g}]\equiv 0.$$    

 Adding the two identities given above, we get 
$y_{1,g}y_{3,g}y_{2,g}+y_{2,g}y_{1,g}y_{3,g}\equiv 0.$  
 Thus,
$$ 
    \begin{array}{ccl}y_{1,g}y_{3,g}y_{2,g}& \equiv& -y_{2,g}y_{1,g}y_{3,g}\\ & \equiv & y_{3,g}y_{2,g}y_{1,g} \\ & \equiv& -y_{1,g}y_{3,g}y_{2,g}\end{array}$$
	and this implies that  $y_{1,g}y_{3,g}y_{2,g}\in \IdG(A)$.
	Therefore, for $n \ge 3$ we have that $P_{(n_{g^+})} \subseteq \IdG(A),$ and so $\dim_FP_{(n_{g^+})}(A)=0$. Consequently, we conclude that $m_{\langle \lambda \rangle}= 0$ in this case, for every multipartition $\langle \lambda \rangle$ of type $(\mu_{g^+})\vdash (n_{g^+})$, and so the lemma follows.

	Now we consider the case $\gamma_{i}=1$ and $\gamma_{l}=-1$. In this case, we have that $A$ satisfies the identities 	$	y_{1,g^2}y_{2,g}+y_{2,g}y_{1,g^2}\equiv 0$ and 
$	z_{1,g^2}y_{2,g}-y_{2,g}z_{1,g^2}\equiv 0. $
  Similarly to the previous case, using the polynomials $y_{1,g}\circ y_{3,g}$ and $[y_{1,g},y_{3,g}]$, as a consequence,  we obtain the identity 
    	\begin{equation} \label{eq7F}
y_{1,g}y_{3,g}y_{2,g}+y_{2,g}y_{3,g}y_{1,g} \equiv 0.
	\end{equation}	
    
    If $n=3$, according to Lema \ref{lemmanovo} we get $m_{\langle \lambda \rangle}\le 1$, for every multipartition $\langle \lambda \rangle$ of type $(\mu_{g^+})\vdash (3_{g^+})$.  Thus, we will treat the case where $n\geq 4$ and consider the identities that appear in item $3).$  
    
    Suppose that $\gamma_{m}=0,$ i.e. $A$ satisfies the identity 
\begin{equation}\label{eq8F} y_{1,g}z_{2,g^3} \equiv 0.
	\end{equation} 

    Define the $(G,*)$-polynomial $f=y_{2,g}y_{4,g}y_{3,g}-y_{3,g}y_{4,g}y_{2,g}$ and note that $f$ is skew of degree $g^3$. Using the identity (\ref{eq8F}) we get $y_{1,g}f\equiv 0$ on $A$ and so $y_{1,g}y_{2,g}y_{4,g}y_{3,g}-y_{1,g}y_{3,g}y_{4,g}y_{2,g}\in \IdG(A)$. Again using the identity (\ref{eq7F}) we get 
$$y_{1,g}y_{2,g}y_{4,g}y_{3,g}\equiv 0. $$ 

Therefore, for $n\geq 4,$ we have $P_{(n_{g^+})}\subset \IdG(A)$ and thus $m_{\langle \lambda \rangle}= 0$, for every multipartition $\langle \lambda \rangle $ of type $(\mu_{g^+})\vdash (n_{g^+})$. 

 Next consider the situation where $\gamma_{i}=1,$ $\gamma_{l}=-1$ and $\gamma_{m}=1$. In this case, we have the identity 
	$$y_{1,g}z_{2,g^3} + z_{2,g^3}y_{1,g}\equiv 0.$$ 
    
    Considering the endomorphism that takes the variable $z_{2,g^3}$ in the polynomial $f$ defined above and using the identity (\ref{eq7F}),  as a consequence of the above identity, we obtain  
\begin{equation}\label{eq9F} y_{1,g}y_{2,g}y_{4,g}y_{3,g}+y_{2,g}y_{4,g}y_{3,g}y_{1,g}\equiv 0.
	\end{equation} 

Thus,  by Lemmas \ref{ref0.1} and \ref{ref0.2} it follows that $m_{\langle \lambda \rangle}\le 1$, for every $\langle \lambda \rangle \vdash (n_{g^+})$ with $n\geq 4.$
	
In the situation where $\gamma_{i}=1,$ $\gamma_{l}=-1$ and $\gamma_{m}=-1,$ 
	the proof is analogous to the last case. 
    Then, it remains to consider the case $\gamma_i=-1.$ 
     If $\gamma_{l}=1$ we have that 
	$y_{1,g^2}y_{2,g}-y_{2,g}y_{1,g^2}\equiv 0$
		and $z_{1,g^2}y_{2,g}+y_{2,g}z_{1,g^2}\equiv 0.$
In this case, we proceed analogously to the case  $\gamma_{i}=1$ and $\gamma_{l}=-1.$ 

Finally, we consider the case with $\gamma_{i}=-1$ and $\gamma_{l}=-1,$ i.e. we have the following identities
	$$y_{1,g^2}y_{2,g}-y_{2,g}y_{1,g^2}\equiv 0,$$
	$$ z_{1,g^2}y_{2,g}-y_{2,g}z_{1,g^2}\equiv 0.$$ 

     We consider the endomorphism that takes the variables $y_{1,g^2}$ and $z_{1,g^2}$, respectively, into the $(G,*)$-polynomials $y_{1,g}\circ y_{3,g}$ and $[y_{1,g}, y_{3,g}]$, symmetric and skew, respectively, of homogeneous degree $g^2$. Adding up the identities we get 
$$y_{1,g}y_{3,g}y_{2,g} - y_{2,g}y_{1,g}y_{3,g}\equiv 0.$$

 Therefore, by Lemma \ref{ref0.3}, it follows that $m_{\langle \lambda \rangle}\le 1$, for all multipartition $\langle \lambda \rangle \vdash (n_{g^+})$ with $n\geq 3,$ concluding the proof of the lemma.
    \end{proof}

Now, we state a equivalent result to the last one about the multipartitions $(\mu_{g^-}) \vdash (n_{g^-}),$ where $g\in G\backslash \{1\}.$  The proof follows in an analogous way to the previous lemma.

\begin{lema}\label{lema4F} If $g\in G\backslash \{1\}$ and assume that $A$ satisfies at least one identity in each one of the following items
	\begin{itemize}
		\item[1)]$y_{1,g^2}z_{2,g}+\gamma_{p} z_{2,g}y_{1,g^2}\equiv 0;$
		
		\item[$2$)]$z_{1,g^2}z_{2,g}+\gamma_{q} z_{2,g}z_{1,g^2} \equiv 0;$
		
		\item[$3$)] $z_{1,g}y_{2,g^3}+\gamma_{r} y_{2,g^3}z_{1,g}\equiv 0;$
		
	\end{itemize} 
    with $\gamma_{p}, \gamma_{q},$ $\gamma_{r} \in\{ 0,1,-1\}.$ Then, for $n\geq 3,$ we have $m_{\langle \lambda \rangle}\le 1$, for all multipartition $\langle \lambda \rangle =(\mu_{g^-})$ of $ (n_{g^-}).$
    \end{lema}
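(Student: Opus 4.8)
The plan is to transcribe the case analysis in the proof of Lemma~\ref{lema2F} into the skew setting, systematically replacing the symmetric degree-$g$ variables $y_{i,g}$ by the skew variables $z_{i,g}$. Everything hinges on three building blocks whose behaviour under $\ast$ is the only place where the skew case differs superficially from the symmetric one. Since $z^\ast=-z$, the two sign changes cancel in any product of two skew variables, so that $z_{1,g}\circ z_{3,g}$ is symmetric of homogeneous degree $g^2$ and $[z_{1,g},z_{3,g}]$ is skew of homogeneous degree $g^2$ --- exactly the symmetry types of $y_{1,g^2}$ and $z_{1,g^2}$ appearing in items $1)$ and $2)$. For item $3)$ I would flag a genuine difference from Lemma~\ref{lema2F}: there the variable $z_{2,g^3}$ was skew and was replaced by a skew degree-$g^3$ polynomial, whereas here the variable $y_{2,g^3}$ is symmetric, so the correct substitute is the \emph{symmetric} degree-$g^3$ polynomial $z_{2,g}z_{4,g}z_{3,g}-z_{3,g}z_{4,g}z_{2,g}$. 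With these substitutions, items $1)$, $2)$, $3)$ become valid $(G,\ast)$-identities in the skew variables $z_{i,g}$.

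First I would dispose of the degenerate cases. If $\gamma_p=0$ (resp. $\gamma_q=0$), substituting $y_{1,g^2}\mapsto z_{1,g}\circ z_{3,g}$ (resp. $z_{1,g^2}\mapsto[z_{1,g},z_{3,g}]$) into item $1)$ (resp. $2)$) and then applying $\ast$ yields reordering relations that collapse the spanning set to $P_{(n_{g^-})}(A)=\spam_F\{z_{1,g}z_{2,g}\cdots z_{n,g}\}$; this is the verbatim skew analogue of Lemma~\ref{lema1F} and already gives $m_{\gen{\lambda}}\le 1$. Assuming next $\gamma_p,\gamma_q\neq 0$, I would substitute both building blocks into items $1)$ and $2)$ and add the two relations, obtaining, exactly as in Lemma~\ref{lema2F}: for $(\gamma_p,\gamma_q)=(1,1)$ the relation $z_{1,g}z_{3,g}z_{2,g}+z_{2,g}z_{1,g}z_{3,g}\equiv 0$, whose threefold iteration forces $z_{1,g}z_{3,g}z_{2,g}\in\IdG(A)$ and hence $m_{\gen{\lambda}}=0$; for $(\gamma_p,\gamma_q)=(-1,-1)$ the relation $z_{1,g}z_{3,g}z_{2,g}-z_{2,g}z_{1,g}z_{3,g}\equiv 0$, giving $m_{\gen{\lambda}}\le 1$ by the skew analogue of Lemma~\ref{ref0.3}; and for $(\gamma_p,\gamma_q)\in\{(1,-1),(-1,1)\}$ the relation
\begin{equation}\label{eq7Fskew}
z_{1,g}z_{3,g}z_{2,g}+z_{2,g}z_{3,g}z_{1,g}\equiv 0,
\end{equation}
which is the skew analogue of (\ref{eq7F}).

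In this last case, for $n=3$ the relation (\ref{eq7Fskew}) gives $m_{\gen{\lambda}}\le 1$ by the skew analogue of Lemma~\ref{lemmanovo}, so only $n\ge 4$ remains and I would bring in item $3)$. Substituting $y_{2,g^3}\mapsto z_{2,g}z_{4,g}z_{3,g}-z_{3,g}z_{4,g}z_{2,g}$, the choice $\gamma_r=0$ gives $z_{1,g}z_{2,g}z_{4,g}z_{3,g}-z_{1,g}z_{3,g}z_{4,g}z_{2,g}\equiv 0$, which together with (\ref{eq7Fskew}) (left-multiplied by $z_{1,g}$) yields $z_{1,g}z_{2,g}z_{4,g}z_{3,g}\in\IdG(A)$ and hence $m_{\gen{\lambda}}=0$; while $\gamma_r=\pm 1$ produces the skew analogue of (\ref{eq-2}), namely $z_{1,g}z_{2,g}z_{4,g}z_{3,g}+z_{2,g}z_{4,g}z_{3,g}z_{1,g}\equiv 0$. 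This relation, together with (\ref{eq7Fskew}) playing the role of (\ref{eq-1}), puts the algebra into the hypotheses of the skew analogues of Lemmas~\ref{ref0.1} and~\ref{ref0.2}, giving $m_{\gen{\lambda}}\le 1$ for all $n\ge 4$. Since every remaining sign combination collapses --- by the symmetry of the roles of $\gamma_p,\gamma_q$ and the interchange of the two choices in item $3)$ --- to one of the cases above, this exhausts the analysis.

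The main thing to get right is the sign bookkeeping: I must verify that $z_{1,g}\circ z_{3,g}$, $[z_{1,g},z_{3,g}]$ and $z_{2,g}z_{4,g}z_{3,g}-z_{3,g}z_{4,g}z_{2,g}$ genuinely carry symmetry types $+,-,+$ under $\ast$, so that they are legitimate substitutes for $y_{1,g^2}$, $z_{1,g^2}$ and $y_{2,g^3}$; this holds because each skew factor contributes a sign $-1$ and these cancel in pairs, while the single leftover sign on a triple product is precisely what makes the difference $z_{2,g}z_{4,g}z_{3,g}-z_{3,g}z_{4,g}z_{2,g}$ symmetric rather than skew. Granting this, the only remaining point --- and the one where I would be most careful --- is to confirm that Lemmas~\ref{lema1F}, \ref{lemmanovo}, \ref{ref0.1}, \ref{ref0.2} and \ref{ref0.3} remain valid with $z_{i,g}$ in place of $y_{i,g}$. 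This is immediate: their proofs only manipulate word-relations among the $n$ homogeneous degree-$g$ variables and then run a dimension count via $P_{\gen{n}}\cong FS_{\gen{n}}$ (whence $m_{\gen{\lambda}}\le d_{\gen{\lambda}}$), and neither ingredient distinguishes symmetric from skew variables.
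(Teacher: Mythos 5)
Your proposal is correct and takes exactly the route the paper intends: the paper's own proof of this lemma is the single sentence that it ``follows in an analogous way to the previous lemma'' (Lemma \ref{lema2F}), and your write-up supplies precisely that transcription. In particular, you correctly identify the one point where the analogy is not purely mechanical --- that the degree-$g^3$ substitute for $y_{2,g^3}$ in item 3) must now be the \emph{symmetric} polynomial $z_{2,g}z_{4,g}z_{3,g}-z_{3,g}z_{4,g}z_{2,g}$ --- and your sign bookkeeping for $z_{1,g}\circ z_{3,g}$, $[z_{1,g},z_{3,g}]$ and the reuse of Lemmas \ref{lema1F}, \ref{lemmanovo}, \ref{ref0.1}, \ref{ref0.2} and \ref{ref0.3} with skew variables all check out.
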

    
    Finally, we are in a position to present the proof of the converse of the Theorem \ref{teoida}. 

  \begin{teorema}\label{conj} 
If for all $g,$ $ h \in G$ with $g \neq h$, 
    we have that
	$A$ satisfies
	at least one identity in each one of the lists of identities below 
$$\begin{array}{lcl} x_{1,g}x_{2,h}+\alpha_{g,h}x_{2,h}x_{1,g}&\equiv& 0, \;\; \mbox{with} \;\;\; \alpha_{g,h} \in \{0, 1, -1\}  \\
    ~ y_{1,g}z_{2,g}+\beta_g z_{2,g} y_{1,g}& \equiv&0, \;\;\mbox{with}\;\;\;\beta_g \in \{0, 1, -1\}\end{array}$$
    where $x_i \in \{y_i, z_i\}$ then for all $n \ge 1,$ all $\langle n \rangle$ and all $\langle \lambda \rangle \vdash \langle n \rangle,$ we have $m_{\langle \lambda \rangle} \le 1.$ 
    \end{teorema}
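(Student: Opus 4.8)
The plan is to reduce the bound on an arbitrary multiplicity $m_{\langle \lambda \rangle}$ to a bound on the ``single-block'' multiplicities $m_{(\mu_{g^\epsilon})}$, for which the preceding lemmas have been tailored. Since the hypothesis of the present theorem is exactly the hypothesis of Lemma \ref{obsum}, that lemma applies and yields, for every multipartition $\langle \lambda \rangle = (\lambda_1, \ldots, \lambda_{2k})$,
$$m_{(\lambda_1,\ldots,\lambda_{2k})} \le m_{(\lambda_1,\emptyset,\ldots,\emptyset)}\cdots m_{(\emptyset,\ldots,\emptyset,\lambda_{2k})}.$$
Thus it suffices to prove that each factor on the right is at most $1$; equivalently, that $m_{(\mu_{g^\epsilon})} \le 1$ for every $g \in G$, every $\epsilon \in \{+,-\}$ and every partition $\mu$. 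Recall also, from the remark following Theorem \ref{teoida}, that the reordering furnished by the hypotheses already forces $m_{\langle \lambda \rangle} \le 1$ whenever $n \le 2$, so I only need to treat blocks of size $n \ge 3$ (for $n \le 2$ one also has $d_{\mu} \le 1$, whence $m_{(\mu_{g^\epsilon})} \le d_{\mu} \le 1$, and nothing is to prove).

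For the neutral component $g = 1$, the second list of the hypothesis (with $g=1$) provides an identity $y_{1,1}z_{2,1} + \beta_1 z_{2,1}y_{1,1} \equiv 0$ with $\beta_1 \in \{0,1,-1\}$, which is precisely the hypothesis of Lemma \ref{lema0F}. That lemma then gives $m_{\langle \lambda \rangle} \le 1$ for all $\langle \lambda \rangle \vdash (n_{1^\epsilon})$, $\epsilon \in \{+,-\}$, disposing of the blocks of degree $1$.

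The substance of the argument is the case $g \in G \setminus \{1\}$, where I must verify that the three families of identities required by Lemma \ref{lema2F} (for $\epsilon = +$) and by Lemma \ref{lema4F} (for $\epsilon = -$) are all consequences of the two lists in the hypothesis, specialised to suitable degrees. The identities in items $1)$ and $2)$ of both lemmas involve homogeneous degrees $g^2$ and $g$; since $g \ne 1$ forces $g^2 \ne g$, each of these is an instance of the first list (taking the appropriate symmetric/skew choice $x_i \in \{y_i, z_i\}$ at degrees $g^2$ and $g$). The delicate point is item $3)$, which involves the degrees $g$ and $g^3$: if $g^2 \ne 1$ then $g^3 \ne g$ and the required identity is again an instance of the first list, whereas if $g^2 = 1$ then $g^3 = g$ and item $3)$ collapses to an identity between a symmetric and a skew variable of the \emph{same} degree $g$, which is exactly supplied by the second list. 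Hence, in every case, the hypotheses of Lemmas \ref{lema2F} and \ref{lema4F} hold, and these lemmas yield $m_{(\mu_{g^\epsilon})} \le 1$ for all $n \ge 3$, all $g \in G\setminus\{1\}$ and both $\epsilon \in \{+,-\}$.

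Combining the three cases, $m_{(\mu_{g^\epsilon})} \le 1$ for every $g \in G$ and every $\epsilon$, and feeding this back into the product inequality from Lemma \ref{obsum} gives $m_{\langle \lambda \rangle} \le 1$ for all $n$, all $\langle n \rangle$ and all $\langle \lambda \rangle \vdash \langle n \rangle$. I expect the only real obstacle to be the degree bookkeeping in item $3)$ just described---keeping track of which of the two lists provides each required specialised identity according to whether or not $g$ has order $2$---together with confirming that the symmetric and skew substitutions $y_{i,g}\circ y_{j,g}$ and $[y_{i,g},y_{j,g}]$ used inside Lemmas \ref{lema2F} and \ref{lema4F} do realise variables of homogeneous degree $g^2$, so that the specialised identities may legitimately be applied to them.
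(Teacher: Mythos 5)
Your proposal is correct and follows essentially the same route as the paper's proof: reduce via Lemma \ref{obsum} to the single-block multiplicities $m_{(\mu_{g^\epsilon})}$, then invoke Lemma \ref{lema0F} for $g=1$ and Lemmas \ref{lema2F} and \ref{lema4F} for $g\neq 1$. Your explicit degree bookkeeping for item $3)$ of those lemmas (distinguishing $g^2=1$, where the second list supplies the identity, from $g^2\neq 1$, where the first list does) is precisely the verification the paper leaves implicit when it asserts that the hypotheses of Lemmas \ref{lema2F} and \ref{lema4F} are met.
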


    \begin{proof} 
    Suppose that $A$ satisfies at least one identity in each item given above. According to Lemma \ref{obsum}, 
    the highest weight vector  associated with a multipartition of type $\langle \lambda \rangle=(\lambda_1, \lambda_2, \ldots, \lambda_{2k})$ is equivalent, up to sign, to the product of highest weight vectors associated with 
    multipartitions of type $\langle \lambda \rangle =(\mu_{\,g^\epsilon})\vdash (n_{g^{\epsilon}})$, where $\epsilon \in \{+,-\}$ and $g\in G\backslash \{1\}.$

Thus, we have $$m_{(\lambda_1,\lambda_2,\ldots,\lambda_{2k})}\le m_{((\lambda_1)_{1^+})}m_{((\lambda_2)_{1^-})}\cdots m_{((\lambda_k)_{g^-})}.$$ 
    
    Since $A$ satisfies $y_{1,1}z_{2,1}+\gamma_1z_{2,1}y_{1,1}\equiv 0,$  by Lemma \ref{lema0F}, we obtain $m_{((\lambda_1)_{1^+})}\le 1$ and $m_{((\lambda_2)_{1^-})}\le 1.$

Given $g\in G\backslash \{1\},$ we know that $A$  satisfies at least one identity in each one of lists of identities of Lemmas \ref{lema2F} and \ref{lema4F}. Therefore,  $m_{\langle \lambda \rangle}\le 1$, for all multipartition $\lambda \vdash (n_{g^\epsilon})$, with $\epsilon\in \{+,-\}$.
Consequently, $m_{{(\lambda_1,\lambda_2,\ldots,\lambda_{2k})}} \le 1$  and we are done.  
    \end{proof}
    
In conclusion, by the previous theorem and Theorem \ref{teoida}, we obtain the following characterization of $(G,*)$-algebras having multiplicities bounded by 1 in the decomposition given in (\ref{eq1coca-}).

\begin{teorema}\label{resultado} 
For all $n\geq 1$, all composition $\langle n \rangle$ and all multipartition $\langle \lambda \rangle \vdash \langle n \rangle$, we have $m_{\langle \lambda \rangle}\le 1,$ if and only if for all $g,$ $ h \in G$ with $g \neq h$, 
    we have that
	$A$ satisfies
	at least one identity in each one of the lists of identities below 
$$\begin{array}{lcl} x_{1,g}x_{2,h}+\alpha_{g,h}x_{2,h}x_{1,g}&\equiv& 0, \;\; \mbox{with} \;\;\; \alpha_{g,h} \in \{0, 1, -1\}  \\
    ~ y_{1,g}z_{2,g}+\beta_g z_{2,g} y_{1,g}& \equiv&0, \;\;\mbox{with}\;\;\;\beta_g \in \{0, 1, -1\}\end{array}$$
    where $x_i \in \{y_i, z_i\}$. 

\end{teorema}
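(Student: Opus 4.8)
The plan is to recognize that this statement is a biconditional whose two implications have each already been isolated and proved in the preceding results, so that no genuinely new argument is required here beyond assembling them. The first thing I would check is that the two lists of identities appearing in the hypothesis are literally the same lists---with the same admissible constants $\alpha_{g,h}\in\{0,1,-1\}$ and $\beta_g\in\{0,1,-1\}$, quantified over the same pairs $g\neq h$ in $G$---that occur in both Theorem \ref{teoida} and Theorem \ref{conj}. Since the matching is exact, the two halves glue together without any compatibility check, and the proof reduces to a citation.

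For the necessity (``only if'') direction, assuming $m_{\langle\lambda\rangle}\le 1$ for every $n$, every composition $\langle n\rangle$ and every $\langle\lambda\rangle\vdash\langle n\rangle$, I would simply invoke Theorem \ref{teoida}: specializing to the multipartitions of shapes $((1)_{g^{\epsilon}},(1)_{h^{\gamma}})$ and $((1)_{g^+},(1)_{g^-})$ and using that the two associated highest weight vectors are linearly dependent modulo $\IdG(A)$ (by Theorem \ref{multiplicity-}), together with the symmetry forced by applying the involution to the resulting relation, produces an identity of each prescribed type. This gives the necessity verbatim.

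For the sufficiency (``if'') direction, assuming $A$ satisfies at least one identity from each list, I would invoke Theorem \ref{conj}. The genuine content of the whole program lives here rather than in the present statement. The reduction in Lemma \ref{obsum} shows that, under these commuting and anticommuting relations, every highest weight vector factors up to sign into a product of highest weight vectors supported on single-block multipartitions $(\mu_{g^\epsilon})$, whence $m_{(\lambda_1,\ldots,\lambda_{2k})}\le m_{((\lambda_1)_{1^+})}m_{((\lambda_2)_{1^-})}\cdots m_{((\lambda_k)_{g^-})}$. It then suffices to bound each factor by $1$: the homogeneous-degree-$1$ factors are controlled by Lemma \ref{lema0F} (through the known $\ast$-algebra result it cites), and the factors indexed by $g\in G\setminus\{1\}$ by the exhaustive analysis over the sign choices $\gamma\in\{0,1,-1\}$ carried out in Lemmas \ref{lema2F} and \ref{lema4F}.

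Thus the proof I would give is essentially one line: combine Theorem \ref{teoida} for the necessity with Theorem \ref{conj} for the sufficiency. There is no real obstacle internal to this theorem; the substantive difficulties---establishing the factorization of highest weight vectors and grinding through every choice of sign in the three-variable relations of degree $g^2$ and $g^3$---have already been absorbed into the lemmas, so here they need only be cited. The only point demanding attention is the verification, noted above, that both implications are stated over the same index set and the same constant sets, which they are.
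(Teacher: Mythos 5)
Your proposal is correct and matches the paper exactly: the paper derives Theorem \ref{resultado} as an immediate combination of Theorem \ref{teoida} (necessity) and Theorem \ref{conj} (sufficiency), with all substantive work already contained in those results and the supporting lemmas. No further comment is needed.
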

		
To illustrate the last result, let us consider  the following example.  
	 \begin{exemplo} Consider 
	$\mathcal{G}_2=\langle 1, e_1, e_2~ \mid ~ e_ie_j=-e_je_i\rangle$ a finite-dimensional subalgebra of the Grassmann algebra $\mathcal{G}$.	
	Given $g, h\in G,$ with $g\neq h$ and $gh\neq 1$ define the following grading on $\mathcal{G}_2$  $$\mathcal{G}_2^{(1)}=\spam_F\{1\}, \mathcal{G}_2^{(g)}=\spam_F\{e_2\},  \mathcal{G}_2^{(h)}=\spam_F\{e_1\}, $$
$$\mathcal{G}_2^{(gh)}=\spam_F\{e_1e_2\} \;\;\mbox{and}\;\;
\mathcal{G}_2^{(r)}=\{0\} \;\; \mbox{for\; all}\; r \in G\backslash \{1, g, h, gh\}.$$
	Now define the involution $*$ on $\mathcal{G}_2$ such that 
	$(e_i)^*=-e_i$, for $i=1,2$. Therefore the grading and involution defined above provide a $(G,*)$-algebra structure to $\mathcal{G}_2,$ which will be denoted by $\mathcal{G}_{2,*}^{g,h}$.
	      The authors in \cite{CSV} proved that
	$$\IdG(A)(\mathcal{G}_{2,*}^{g,h})=\langle z_{1,1}, y_{1,g}, y_{1,h}, y_{1,gh}, z_{1,g}z_{2,g}, z_{1,g}z_{2,gh}, z_{1,h}z_{2,h},z_{1,h}z_{2,gh}, z_{1,gh}z_{2,gh}, x_{1,r}\rangle_{T_{(G,*)}}.$$

By observing the identities satisfied by $\mathcal{G}_{2,*}^{g,h}$ and using the previous theorem, we conclude that all multiplicities in the  decomposition of $\chi_{\langle n\rangle}(\mathcal{G}_{2,*}^{g,h})$ are bounded by one.
In fact, in \cite{CSV} the authors showed that the only $\langle n \rangle$-cocharacters of $\mathcal{G}_{2,*}^{g,h}$ with non-zero multiplicities are 
$$\chi_{((n)_{1^+})},\,\,\, \chi_{((n-1)_{1^+}, (1)_{g^-})}, \,\,\,\,\chi_{((n-1)_{1^+},(1)_{h^-})},\,\,\, \chi_{((n-1)_{1^+}, (1)_{gh^-} )}\,\,\, \mbox{ and } \,\,\, \chi_{((n-2)_{1^+}, (1)_{g^-}, (1)_{h^-})}$$
and also, they proved that their multiplicities are bounded by one.

\end{exemplo}

\end{document}